\documentclass[11pt,reqno]{amsart}
\usepackage{graphicx}
\usepackage[draft]{hyperref}
\usepackage{mathrsfs,array,amssymb,pb-diagram}
\usepackage{amsmath,amsopn,amsfonts,stmaryrd,amsthm}
\usepackage[toc,page,title,titletoc,header]{appendix}
\usepackage{multirow,float}
\usepackage{xcolor}
\usepackage{mathtools}
\usepackage{color}
\usepackage{enumitem}
\setitemize{leftmargin=*}
\setlist[enumerate]{leftmargin=*,label=\rm{(\arabic*)}}
\usepackage[framemethod=TikZ]{mdframed}
\usepackage{bbm}
\usepackage{booktabs}
\usepackage{caption}
\usepackage{bm}
\usepackage{tensor}
\usepackage{cleveref}
\usepackage{comment}

\mathtoolsset{showonlyrefs}
\theoremstyle{plain}
\newtheorem{thm}{Theorem}[section]

\newtheorem{prop}[thm]{Proposition}

\newtheorem*{remark}{Remark}
\newtheorem*{remarks}{Remarks}
\newtheorem{remark*}[thm]{Remark}

\newtheorem{Theorem}[thm]{Theorem}
\newtheorem{Corollary}[thm]{Corollary}
\newtheorem{Lemma}[thm]{Lemma}
\newtheorem{Proposition}[thm]{Proposition}
\newtheorem{theorem}[thm]{Theorem}
\newtheorem*{theorem*}{Theorem}

\theoremstyle{definition}

\newtheorem*{Example}{Example}

\newtheorem*{Definition*}{Definition}

\numberwithin{equation}{section}

\newcommand{\rad}{\operatorname{rad}}

\def\H{\mathbb H}

\def\GL{\mathrm{GL}}

\def\N{\mathbb N}
\def\C{\mathbb C}

\def\R{\mathbb R}
\def\Z{\mathbb Z}
\def\sM{M}
\def\sH{\mathscr{H}}
\def\sS{S}
\def\mM{\mathscr{M}}

\def\re{\mathrm{Re}}

\def\JS#1#2{\left(\frac{#1}{#2}\right)}
\def\JnoS#1#2{(\frac{#1}{#2})}
\def\SL{\mathrm{SL}}

\def\M#1#2#3#4{\begin{pmatrix}#1&#2\\#3&#4\end{pmatrix}}
\def\SM#1#2#3#4{\left(\begin{smallmatrix}#1&#2\\#3&#4\end{smallmatrix}
  \right)}
\def\fg{\mathcal{G}}

\renewcommand{\pmod}[1]{\ \left( \mathrm{mod} \, #1 \right)}
\newcommand{\Pmod}[1]{\ ( \mathrm{mod} \, #1 )}
\newcommand{\sqpt}{l}
\renewcommand{\SS}{\mathcal{L}}

\newcommand{\deltabeta}{\beta}
\newif\ifdefs

\defsfalse

\newif\ifdiscs 
\discstrue

\makeatletter
\@namedef{subjclassname@2020}{%
	\textup{2020} Mathematics Subject Classification}
\makeatother

\ifdiscs
\else
\excludecomment{discussion}
\fi

\ifdefs
\else
\excludecomment{extradetails}
\fi

\allowdisplaybreaks

\setlist[itemize]{noitemsep, topsep=0pt}

\title[Hurwitz class number relations]{Hurwitz class number relations and mock modular forms}
\author{Kathrin Bringmann}
\address{University of Cologne, Department of Mathematics and Computer Science, Weyertal 86-90, 50931 Cologne, Germany}
\email{kbringma@math.uni-koeln.de}
\author{Jia-Wei Guo}
\address{Department of Mathematics, Soochow University, Taipei, Taiwan}
\email{jiaweiguo312@gmail.com}
\author{Ben Kane}
\address{Department of Mathematics, The University of Hong Kong, Pokfulam, Hong Kong}
\email{bkane@hku.hk}
\author{Michael Mertens}
\address{Lehrstuhl f\"ur Algebra und Zahlentheorie, RWTH Aachen University, Pontdriesch 14/16, D-52062 Aachen, Germany}
\email{michael.helmut.mertens@rwth-aachen.de}
\author{Yifan Yang}
\address{Department of Mathematics, National Taiwan University  and National Center for Theoretical Sciences, Taipei 10617, Taiwan}
\email{yangyifan@ntu.edu.tw}
\keywords{binary quadratic forms, genus theory, Hurwitz class numbers, level-lowering operators, mock modular forms, modular forms}
\subjclass[2020]{11E41, 11F11, 11E16, 11F27, 11F37.}
\begin{document}
\date{\today}
\begin{abstract}
A classical class number relation of Hurwitz expresses the Fourier coefficients of the product of a unary theta function and the class number generating function. Here, we establish an infinite family of analogous class number relations obtained by replacing the unary quadratic form $m^2$ by positive-definite binary quadratic forms. These identities involve Cohen's generalized class numbers and depend only on the genus of the underlying quadratic form. For this, we construct a genus-dependent level-lowering operator.
\end{abstract}
\maketitle

\section{Introduction and statement of results}

For $n\in\N_0$ with $n\equiv 0,3\Pmod{4}$, let $H(n)$ denote the Hurwitz class numbers.\footnote{See Section \ref{sec:prelims} for formal definitions and unexplained notation.} By a classical class-number relation of Hurwitz \cite[p. 166]{Hurwitz},\footnote{See also \cite[p. 154]{Eichler}.} for a prime $p$, we have  
\begin{equation}\label{eqn:Eichler}
\sum_{\substack{m\in\Z\\ |m|\leq 2\sqrt{p}}} H\left(4p-m^2\right) =2p.
\end{equation}
This formula  can be interpreted as an identity between Fourier coefficients of modular generating functions. It is natural to ask whether similar relations arise from more general theta functions. We investigate extensions of \eqref{eqn:Eichler}, where the unary quadratic form $m^2$ is replaced by a positive-definite quadratic form. 

For a discriminant $-\delta<0$, we denote by $\mathcal{Q}_{-\delta}$ the set of positive-definite integral binary quadratic forms of discriminant $-\delta$. Our generalizations of \eqref{eqn:Eichler} only depend on the genus of $Q$. We first state our results for the principal genus. The resulting identities involve Cohen's generalized class numbers. For a fundamental discriminant $\Delta$ (including $\Delta=1$), let 
  $L(s,\chi_{\Delta}):=\sum_{n\ge1}\frac{\chi_\Delta(n)}{n^s}$ ($\re(s)>1$) denote the {\it Dirichlet $L$-series associated to} $\chi_{\Delta}:=\left(\frac{\Delta}{\cdot}\right)$. Here $(\frac\cdot\cdot)$ is the extended Legendre symbol. It is well-known that $L(s,\chi_{\Delta})$ has a meromorphic continuation to the entire $s$-plane. Cohen \cite{Cohen} introduced generalized class numbers $H(r,n)$ by\footnote{See the second displayed formula on \cite[p. 273]{Cohen} for the first case.} 

\[
H(r,n):=\begin{cases} L\left(1-r,\chi_{\Delta}\right)\sum_{d|\sqpt}\mu(d)\chi_{\Delta}(d)d^{r-1}\sigma_{2r-1}\left(\frac{\sqpt}{d}\right) & \text{if } (-1)^rn=\Delta \sqpt^2\text{ and }n\in\N,\\
\zeta(1-2r) &\text{if }n=0,\\
0 &\text{otherwise,}\end{cases}
\]
where $\mu(n)$ denotes the M\"obius function, $\zeta$ is the Riemann zeta function, and $\sigma_k(n):=\sum_{d|n}d^k$ is the {\it $k$-th divisor sum}. These numbers occur naturally in the Fourier coefficients of half-integral weight Eisenstein series.  For $r=1$ they recover the classical Hurwitz class number. 

The following identity describes the principal genus case. 
\begin{theorem}\label{thm:PrincipalGenus}
Let $-\delta<0$ be an odd fundamental discriminant and suppose that $Q\in\mathcal{Q}_{-\delta}$ is in the principal genus. Then, for $n\equiv 1\Pmod{4}$, we have\footnote{We write vectors in bold letters throughout.}
\[
\sum_{\substack{\bm{m}\in\Z^2 \\ \delta n-4Q(\bm{m})\equiv 3\Pmod 8}}\sum_{r|\delta}H\left(\frac{\delta n-4Q(\bm{m})}{r^2}\right)r=-[n]_8H(2,n)\sigma_1(\delta),
\]
where $0\leq [n]_8<8$ satisfies $n\equiv [n]_{8}\pmod{8}$.
\end{theorem}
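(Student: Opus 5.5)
Write $\mathscr H(\tau):=\sum_{n\ge0}H(n)q^n$ for Zagier's weight $3/2$ mock modular form on $\Gamma_0(4)$ (with $H(0)=-\frac1{12}$), whose completion $\widehat{\mathscr H}$ is harmonic with shadow proportional to $\theta(\tau)=\sum_{m\in\Z}q^{m^2}$, and $\theta_Q(\tau):=\sum_{\bm m\in\Z^2}q^{Q(\bm m)}$ for the weight $1$ theta function of level $\delta$ with character $\chi_{-\delta}$. The plan is to read the left-hand side as a Fourier coefficient of a level-lowered product $\widehat{\mathscr H}(\delta\tau)\,\theta_Q(-4\tau)$ type combination and to identify the result with a multiple of Cohen's weight $5/2$ Eisenstein series $\mathscr H_2(\tau):=\sum_n H(2,n)q^n$ on $\Gamma_0(4)$.

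\textbf{Building the form.} The sum over $r\mid\delta$ of $rH(N/r^2)$ is, up to normalisation, the $N$-th coefficient of $\sum_{r\mid \delta} r\,\mathscr H|U_{r^2}$, which for squarefree odd $\delta$ is essentially the image of $\mathscr H$ under the Hecke-type operator $\prod_{p\mid\delta}(1+pU_{p^2})$. Compose this with $V_\delta$ (replace $\tau$ by $\delta\tau$) so that the argument $\delta n-4Q(\bm m)$ appears. The generating function over $n$ is therefore obtained by taking the Rankin--Cohen-free product
\[
F(\tau):=\Big(\sum_{r\mid\delta} r\,\widehat{\mathscr H}\big|U_{r^2}\Big)(\tau)\cdot \theta_Q(4\tau),
\]
picking the coefficients of $q^{\delta n}$ with the congruence condition $\delta n-4Q(\bm m)\equiv3\Pmod 8$ (a twist/sieve by characters mod $8$, i.e.\ projecting $\mathscr H$ onto its $n\equiv3\Pmod8$ part, which is a form on $\Gamma_0(32)$), and then applying $U_\delta$. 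The result is a weight $5/2$ object on $\Gamma_0(4\delta\cdot 8)$ with some character; the level-lowering operator (trace from level $\delta$ down, suitably twisted by genus characters) should bring it down to level $32$ (or $4$ after the mod $8$ projection is undone).

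\textbf{Holomorphy.} The nonholomorphic part of $F$ is a product of the shadow-integral of $\theta$ with $\theta_Q(4\tau)$; its contribution to the $q^{\delta n}$ coefficients is a sum over representations $\delta n = t^2+4Q(\bm m)$, i.e.\ over a ternary form $x^2+4Q$. The key step is to show that after the operator $\sum_r rU_{r^2}$, $U_\delta$, and the genus-dependent level lowering, these nonholomorphic terms cancel. Here the hypothesis that $Q$ is in the principal genus and $\delta$ is a fundamental discriminant enters: the ternary lattice $\langle1\rangle\oplus 4Q$ at primes $p\mid\delta$ behaves like the one for the principal form, so the level-lowered nonholomorphic part matches that of a theta function of a ternary form whose genus contains only norm forms killed by the projection. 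I expect this cancellation (equivalently, that the level-lowered shadow vanishes) to be the main obstacle, and I would first try to verify it by computing the action of the operator on $\theta(\tau)\overline{\theta_Q}$-type shadows using the Weil representation at $p\mid\delta$.

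\textbf{Identification.} Once the level-lowered form $G$ is holomorphic of weight $5/2$ on $\Gamma_0(4)$ in Kohnen's plus space (after the mod 8 bookkeeping producing the factor $[n]_8$ on residue classes $1,5$), the space is spanned by $\mathscr H_2$ alone (there are no cusp forms in $M^+_{5/2}(\Gamma_0(4))$, as $S_4(\SL_2(\Z))=0$). Thus $G=c\,\mathscr H_2$, and comparing one coefficient, or the constant terms at the cusps (which come from $H(0)\theta_Q$ and give $\sigma_1(\delta)$ from $\sum_{r\mid\delta}r$), yields $c=-\sigma_1(\delta)$ times the factor $[n]_8\in\{1,5\}$, proving the identity.
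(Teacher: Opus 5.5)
Your outline (class-number generating function times $\Theta_Q|V_4$, lower the level, identify with Cohen's Eisenstein series) has the right overall shape. However, both substantive steps are either missing or wrong.

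\textbf{Level lowering.} First, $\sum_{r\mid\delta}rH(N/r^2)$ is the $N$-th coefficient of $\sum_{r\mid\delta}r\,\mathcal H|V_{r^2}$, not of $\sum_{r\mid\delta}r\,\mathcal H|U_{r^2}$.

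More seriously, you never explain why $\sum_{r\mid\delta}r\,(\mathcal H|V_{r^2}\cdot\Theta_Q|V_4)|U_\delta$ (sieved) has small level. Applying a twisted trace to it produces a \emph{different} function unless you already know it is invariant, and that invariance is the whole point.

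The idea you are missing is to work with the level-$4\delta$ form $\widehat{\mathcal H}_\delta$ of generalized class numbers $H_{1,\delta}(n)$. It is obtained from $\widehat{\mathcal H}$ by applying $f\mapsto f+f_p$ (Lemma~\ref{lemma: mock to modular}) for each $p\mid\delta$, i.e.\ $c(n)\mapsto(1+\JS{-n}{p})c(n)+p\,c(n/p^2)$ on holomorphic coefficients.
\begin{itemize}
\item Its coefficients vanish when $\JS{-n}{p}=-1$ for some $p\mid\delta$, so it lies in the principal-genus ``$-$'' subspace (Proposition~\ref{proposition: genus of HDN}).
\item $\Theta_{Q,2,m}|V_4$ lies in the ``$+$'' subspace by Lemma~\ref{lem:genuscharnotrelprime}.
\item For such a pair, Lemma~\ref{lemma: UW on g} turns the $W_p$-term of the Atkin--Lehner-type operator $\SS_{p,M}=U_p+c\,W_{p,M}$ into $f|U_p\cdot g|U_p$. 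Hence $(fg)|\SS_{p_1,\delta}\cdots\SS_{p_\ell,p_\ell}=\sum_{d\mid\delta}(f|U_d\cdot g|U_d)|U_{\delta/d}$ drops to level $16$ (Theorem~\ref{theorem: level reduction}).
\item Only afterwards does Proposition~\ref{prop:Hrewrite} return to classical $H$. Since $Q$ represents no integer $a$ coprime to $p$ with $\JS{a}{p}=-1$ for $p\mid\delta$, the factors $b_{1,\delta,r}(n)$ of Lemma~\ref{lemma: alternative HDN} collapse to powers of $2$ on the relevant terms. One gets exactly $2^{\omega(\delta)}$ times your generating function.
\end{itemize}

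Conversely, the step you flag as the main obstacle is immediate. The non-holomorphic part of $\widehat{\mathcal H}$ (and of $\widehat{\mathcal H}_\delta$) is supported on exponents $-t^2\not\equiv 3\Pmod{8}$. So the congruence condition in the statement kills it outright; your $\mathcal H|S_{8,3}$ is already holomorphic, and no ternary-lattice or Weil-representation cancellation is needed. The paper sieves $\Theta_Q$ mod $2$ before lowering and applies $S_{8,j}$ only at level $16$ (Lemma~\ref{lemma: level 16}). This keeps everything inside the level-$4N$ ($N$ odd) framework where the genus subspaces are defined.

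\textbf{Identification.} After imposing the mod $8$ condition, your generating function vanishes at $n=0$ and for $n\equiv 0\Pmod{4}$. So it is not in $\sM^+_{5/2}(\Gamma_0(4))$ and is not a multiple of $\mathcal H_{5/2}$; for instance $H(2,4)=-\frac{7}{12}\neq0$. Indeed $-[n]_8H(2,n)\sigma_1(\delta)$ is not proportional to $H(2,n)$, and the constant term at $\infty$ you propose to compare is sieved out.

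What is needed is the following:
\begin{itemize}
\item The parts $G_{Q,1}$, $G_{Q,5}$ supported on $n\equiv 1$ resp.\ $5\Pmod{8}$ lie in $\sM_{5/2}(\Gamma_0(16))$.
\item In that space, the subspace of forms supported on $n\equiv j\Pmod{8}$ is one-dimensional, spanned by $\mathcal H_{5/2}|S_{8,j}$.
\item This gives two a priori unrelated constants, which must be computed from actual coefficients. The paper takes $n=1$ and $n=5$ for the principal form $\mathcal Q$ and reduces them to unary sums such as $\sum_{m\equiv\frac{\delta-3}{4}\Pmod{2}}H(\delta-4m^2)=\frac{1}{12}\sigma_1(\delta)$ (Lemma~\ref{lem:UnarySum}). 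It gets $-1$ and $-5$ from $H(2,1)=-\frac{1}{12}$ and $H(2,5)=-\frac{2}{5}$; this is where $[n]_8$ comes from.
\end{itemize}

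Finally, you never address why the constants are the same for every $Q$ in the principal genus. For non-principal $Q$ the small coefficients are not directly computable. The paper uses that $\Theta_Q-\Theta_{\mathcal Q}$ is cuspidal, so $G_{Q,j}-G_{\mathcal Q,j}$ is cuspidal, while $\mathcal H_{5/2}|S_{8,j}$ is not (Lemma~\ref{lem:HSn8-notcusp}).
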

\begin{remark}
The numbers $H(2,n)$ appearing in Theorem \ref{thm:PrincipalGenus} also have geometric significance, such as in van der Geer's study of Humbert divisors (see \cite[Theorem 8.1]{vanderGeer}).
\end{remark}
It is natural to ask for analogous identities to those in Theorem \ref{thm:PrincipalGenus} for $Q\in\mathcal{Q}_{-\delta}$ not in the principal genus. For an odd discriminant $-\delta<0$, the genera of $\mathcal{Q}_{-\delta}$ are determined by coprime squarefree positive integers $D$ and $N$ with $DN=\delta$ which are constructed using the values of the genus characters. Setting $\rad(\delta):=\prod_{p\mid \delta} p$,  this motivates the definition
\[
\mathscr{S}_{\delta}:=\left\{(D,N)\in\N^2: \gcd(D,N)=1,\ D,N\text{ squarefree},\ DN=\rad(\delta)\right\}.
\]
For $(D,N)\in\mathscr{S}_{\delta}$, let $\fg_{-\delta,D,N}\subseteq \mathcal{Q}_{-\delta}$ denote the corresponding genus. Let
\[
\mathscr{S}_{\delta}^{\operatorname{np}}:=\left\{(D,N)\in\mathscr{S}_{\delta}:D>1\right\}.
\]
For $D>1$, our generalizations of \eqref{eqn:Eichler} for $Q\in\fg_{-\delta,D,N}$ also require an extension of the Hurwitz class numbers. We first define a more general class number $h_{D,N}(d)$  for $(D,N)\in\mathscr{S}_{\delta}$ below in \eqref{eq: hDN}. This leads to the generalized Hurwitz class numbers
\begin{equation}\label{eqn:HDNcoeff}
H_{D,N}(n):=\begin{cases}
  \displaystyle-\frac1{12}\prod_{p|D}(p-1)\prod_{p|N}(p+1) &\text{if }n=0, \\
  \displaystyle\sum_{r|\sqpt}\frac{h_{D,N}\left(r^2\Delta\right)}{w_{r^2\Delta}} &\text{if }-n=\sqpt^2\Delta, \\
  0 &\text{otherwise}. \end{cases}
\end{equation}
Here $w_d:=1$ if $d\notin\{-3,-4\}$, $w_{-3}:=2$, and $w_{-4}=3$.
The case $D=N=1$ is the  Hurwitz class number $H(n)$. If $D=1$, then we omit the subscript in $H_{D,N}(n)$. We let $\omega(D)$ denote the number of distinct prime divisors of $D$. 
We obtain the following extension\footnote{
If $-\delta$ is an odd fundamental discriminant, then $H_{1,\delta}(0)=-\frac{1}{12}\sigma_1(\delta)$.
}  of Theorem \ref{thm:PrincipalGenus} for $Q\in\mathcal{Q}_{-\delta}$ not contained in the principal genus.
\begin{theorem}\label{thm:D>1}
 Let $\delta\in\N$ be squarefree such that $\delta\equiv 3\Pmod 4$ and suppose that $(D,N)\in\mathscr{S}_{\delta}^{\operatorname{np}}$.  Then, for $Q\in\fg_{-\delta,D,N}$ and $n\equiv0,1\Pmod 4$, we have
   $$
  \sum_{d|\delta}(-1)^{\omega(\gcd(D,d))}d\sum_{\bm{m}\in\Z^2} H\left(\frac{\delta n-4Q(\bm{m})}{d^2}\right)=120H(2,n)H_{D,N}(0).
  $$
\end{theorem}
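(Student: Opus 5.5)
We will prove Theorem \ref{thm:D>1} as an identity between Fourier coefficients of a holomorphic modular form, in four steps.

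\textbf{Step 1: the mock modular product.} Let $\mathcal{H}(\tau):=\sum_{n\ge0}H(n)q^n$ be Zagier's weight $3/2$ mock modular form on $\Gamma_0(4)$. Let $\theta_Q(\tau):=\sum_{\bm{m}\in\Z^2}q^{Q(\bm{m})}$, a weight one form of level $\delta$ with character $\chi_{-\delta}$. We rescale so that the left-hand side of the theorem becomes a Fourier coefficient of
\[
F_Q(\tau):=\mathcal{H}(\delta\tau)\,\theta_Q(-4\tau)^{\ast},
\]
more precisely of the product $\mathcal{H}(\tau)\theta_Q(4\tau)$ with the exponent $\delta n-4Q(\bm{m})$ read off. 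The sum over $d\mid\delta$ with weights $(-1)^{\omega(\gcd(D,d))}d$ and arguments divided by $d^2$ is the coefficient of $\sum_{d\mid\delta}(-1)^{\omega(\gcd(D,d))}d\,(\mathcal{H}\,|\,U_{d^2})$. Since $U_{d^2}$ acts on $\mathcal{H}$ essentially like a Hecke operator $T_{d^2}$, this is a twisted Hecke combination.

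\textbf{Step 2: the level-lowering operator.} We package the combination of Step 1, the operator $U_{\delta}$, and the projection to exponents $\delta n-4Q(\bm{m})$ with $n\equiv0,1\Pmod4$ as a genus-dependent level-lowering operator $\mathcal{L}_{D,N}$. Applied to the completed product $\widehat{\mathcal{H}}(\delta\tau)\theta_Q(4\tau)$, it should output a weight $5/2$ form on $\Gamma_0(4)$ in the plus space. The choice of signs $(-1)^{\omega(\gcd(D,d))}$ is what kills the contributions of the other genera. To verify modularity, we decompose into Atkin--Lehner pieces at each $p\mid\delta$: for $p\mid D$ the sign twists the $W_p$-eigenvalue so that the theta function of the genus $\fg_{-\delta,D,N}$ behaves like an eigenform with sign $-1$. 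Here we use the known Atkin--Lehner behaviour of genus theta functions, $\theta_Q|W_p=\pm\theta_{Q'}$ with sign given by the genus character at $p$. Since the left side depends only on the genus, we may replace $\theta_Q$ by the genus theta function.

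\textbf{Step 3: nonholomorphic part and cuspidality.} The completion of $\mathcal{H}$ has a nonholomorphic part given by a weight $3/2$ unary theta integral. After multiplying by $\theta_Q$ and applying $\mathcal{L}_{D,N}$, its contribution is governed by a holomorphic projection involving $\sum_{m}q^{m^2}$ against the binary theta series. This is the main obstacle. We must show that for $D>1$ the nonholomorphic correction vanishes identically. The plan is to write the product of the unary shadow with $\theta_Q$ as a theta function of the ternary lattice $\Z\oplus(\Z^2,Q)$, and to show that the signed sum over $d\mid\delta$ is a projection onto an Atkin--Lehner eigenspace. On that eigenspace the representation numbers of $m^2+\ldots$ cancel, because the genus character with $D>1$ is nontrivial on the principal parts. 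Alternatively, one shows directly that the holomorphic projection's Eisenstein contribution has the Atkin--Lehner signs $\prod_{p|D}(-1)$ that annihilate it unless $D=1$. This explains why, unlike Theorem \ref{thm:PrincipalGenus}, no extra correction term appears.

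\textbf{Step 4: identification.} Once the image is a holomorphic weight $5/2$ form in Kohnen's plus space on $\Gamma_0(4)$, we use that this space is one-dimensional, spanned by Cohen's Eisenstein series $\mathcal{H}_2(\tau)=\sum_n H(2,n)q^n$ with $H(2,0)=\zeta(-3)=1/120$. The identity then reduces to comparing constant terms. The constant term of the left side is the $n=0$, $\bm{m}=0$ contribution $\sum_{d\mid\delta}(-1)^{\omega(\gcd(D,d))}d\,H(0)$, and $H(0)=-1/12$, so it equals $-\frac{1}{12}\prod_{p\mid D}(1-p)\prod_{p\mid N}(1+p)$. Up to the sign conventions to be checked, this is $H_{D,N}(0)$. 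Multiplying by $120$ to normalize against $H(2,0)$ gives the stated right-hand side $120H(2,n)H_{D,N}(0)$.
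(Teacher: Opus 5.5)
Your endpoint in Step 4 is the paper's: land in the one-dimensional space $\sM^+_{5/2}(\Gamma_0(4))$ and compare constant terms. The gap is Step 2, which carries all the weight but is only asserted. Naming an operator $\mathcal{L}_{D,N}$ whose output ``should'' lie on $\Gamma_0(4)$ is not an argument, and the mechanism you sketch cannot work as stated.

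Atkin--Lehner level lowering needs \emph{both} factors to lie in matching $U_pW_p$-eigenspaces at every $p\mid\delta$. Only then can $f|W_p\cdot g|W_p$ be rewritten as a multiple of $f|U_p\cdot g|U_p$, so that the operators $U_p+c\,W_{p,M}$ turn $fg$ into $\sum_{d\mid\delta}(f|U_d\cdot g|U_d)|U_{\delta/d}$ at lower level. The theta side is fine: the coefficients of $\Theta_Q|V_4$ coprime to $\delta$ vanish unless $\left(\frac{n}{p}\right)=\chi_{p^*}(\fg)$. But $\mathcal H$ and your signed combination of its $V_{d^2}$-images lie in no genus subspace, since their coefficients at $n$ coprime to $\delta$ are just $H(n)$ with no restriction on $\left(\frac{-n}{p}\right)$.

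The missing idea is the paper's generating function $\mathcal H_{D,N}$, built from the local factors $m_{D,N,p}$. It equals $(-1)^{\omega(D)}\sum_{r\mid\delta}r\sum_n b_{D,N,r}(n)H(n)q^{r^2n}$, so it involves the quadratic twists $\left(\frac{-n}{p}\right)\pm1$ of $\mathcal H$. It is modular on $\Gamma_0(4\delta)$, and holomorphic for $D>1$, as $f-f_p$ with $f-f_p=f|U_{p^2}+f-f|T_{p^2}$. It lies in $\sM^{\fg,-}_{3/2}(\Gamma_0(4\delta))$. Only with this does the level lowering go through.

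You then still need the bridge back to your sum, Proposition~\ref{prop:Hrewrite}. After multiplying by $\Theta_Q|V_4$ and applying $U_\delta$, the genus characters determine $\left(\frac{-n}{p}\right)$ on the surviving terms. This collapses the twists to $2^{\omega(\delta)}\sum_{d\mid\delta}(-1)^{\omega(\gcd(D,d))}d\,(\mathcal H|V_{d^2}\cdot\Theta_Q|V_4)|U_\delta$.

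Your replacement of $\theta_Q$ by the genus theta series ``since the left side depends only on the genus'' is circular. Genus-independence is a consequence of the theorem; in the paper it comes for free because the argument works for each $\Theta_Q$ individually.

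Conversely, Step 3, which you call the main obstacle, is easy. It needs no ternary lattices or holomorphic projection.
\begin{itemize}
\item In $\sum_{d\mid\delta}(-1)^{\omega(\gcd(D,d))}d\,\widehat{\mathcal H}|V_{d^2}$, the $\tau_2^{-1/2}$-coefficient is proportional to $\sum_{d\mid\delta}(-1)^{\omega(\gcd(D,d))}=0$.
\item The $q^{-m^2}$-terms survive only when $\gcd(m,D)=1$.
\item For such $m$ and $p\mid D$, a surviving exponent $4Q(\bm n)-m^2\equiv 0\Pmod p$ would force $\left(\frac{Q(\bm n)}{p}\right)=1$. By Lemma~\ref{lem:genuscharnotrelprime} this gives $\chi_{p^*}(Q)=1$, contradicting $\chi_{p^*}(\fg)=-1$.
\end{itemize}
So $U_\delta$ kills the nonholomorphic part. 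In the paper this is automatic, since $\mathcal H_{D,N}$ is already holomorphic.

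There are also bookkeeping errors:
\begin{itemize}
\item Dividing the argument by $d^2$ corresponds to $\mathcal H|V_{d^2}$, not $\mathcal H|U_{d^2}$.
\item The relevant object is $(\mathcal H\cdot\Theta_Q|V_4)|U_\delta$, not $\mathcal H(\delta\tau)\Theta_Q(4\tau)$.
\item The sign in Step 4 is not a convention issue. Since $\prod_{p\mid\delta}\chi_{p^*}(Q)=1$ for $\delta\equiv3\Pmod4$, the genus $\fg_{-\delta,D,N}$ is empty unless $\omega(D)$ is even. Hence $\prod_{p\mid D}(1-p)=\prod_{p\mid D}(p-1)$, and your constant term equals $H_{D,N}(0)$ exactly.
\end{itemize}
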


The proofs of Theorems \ref{thm:PrincipalGenus} and \ref{thm:D>1} rely on modular properties of the corresponding generating functions. Define ($q:=e^{2\pi i\tau}$ throughout)
\begin{equation*}
\mathcal H_{D,N}(\tau):=\sum_{n\ge0} H_{D,N}(n)q^n.
\end{equation*}
If $D=N=1$, then the generating function
\begin{equation*} 
\mathcal H(\tau):=\mathcal{H}_1(\tau)=\sum_{n\ge0} H(n)q^n
\end{equation*}
is not quite modular. Zagier \cite{Zagier} proved that it has a natural non-holomorphic ``completion'' which satisfies weight $\frac{3}{2}$ modularity on $\Gamma_0(4)$. The \begin{it}Jacobi theta function\end{it} $\Theta(\tau):=\sum_{n\in\Z} q^{n^2}$ is a weight $\frac{1}{2}$ modular form. Noting that the left-hand side of \eqref{eqn:Eichler} is the $4p$-th Fourier coefficient of $\mathcal{H}\Theta$, one can give a modular proof of \eqref{eqn:Eichler}.  For $Q\in\mathcal{Q}_{-\delta}$, one similarly associates a theta function  to $Q$
  $$
  \Theta_Q(\tau):=\sum_{\bm{n}\in\Z^2}q^{Q(\bm{n})}.
  $$
It is well-known that $\Theta_Q$ is a modular form of weight $1$ (see \cite[Proposition 2.1]{Shimura-correspondence}). Recall next the{ \it $U \!$- and $V \! $-operators}, defined for  $f(\tau)=\sum_{n\gg -\infty}c(n)q^n$ and $d\in \mathbb{N}$ by $f|U_d(\tau):=\sum_{n\gg -\infty} c(dn)q^n$ and $f|V_d(\tau):=\sum_{n\gg -\infty} c(n)q^{dn}$, respectively. Since these operators preserve modularity, in the next theorem the left-hand side of Theorem \ref{thm:D>1} arises as the $n$-th Fourier coefficient of a certain modular object. On the other hand, as noted in \cite[Theorem 3.1, Corollary 3.2]{Cohen}, $H(2,n)$ is the $n$-th Fourier coefficient of the following Eisenstein series of weight $\frac{5}{2}$
  \begin{equation*} 
  \mathcal H_{\frac{5}{2}}(\tau):=\frac{1}{120}\Theta^5(\tau)-\frac{1}{6}\Theta(\tau)\frac{\eta^8(4\tau)}{\eta^4(2\tau)},
  \end{equation*}
with $\eta(\tau):=q^{\frac{1}{24}}\prod_{n\ge1} (1-q^n)$ the \begin{it}Dedekind $\eta$-function\end{it}. Reformulating Theorem \ref{thm:D>1} in terms of generating functions yields the following modular identity. 
\begin{Theorem} \label{theorem: D>1}
  Let $\delta\in\N$ be squarefree with $\delta\equiv 3\Pmod 4$ and suppose that  $(D,N)\in\mathscr{S}_{\delta}^{\operatorname{np}}$. Then, for $Q\in \fg_{-\delta,D,N}$, we have
  \begin{equation*} 
  \sum_{d|\delta}\big( \mathcal H_{D,N}|U_d \cdot \Theta_Q|V_{4} U_d\big)|U_{\frac{\delta}{d}} = 120\cdot 2^{\omega(\delta)}H_{D,N}(0)\mathcal H_{\frac{5}{2}}.
  \end{equation*}
\end{Theorem}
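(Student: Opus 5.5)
The plan is to show that the left-hand side, call it $F$, is a holomorphic modular form of weight $\frac52$ on $\Gamma_0(4)$ in Kohnen's plus space. That space is one-dimensional, spanned by $\mathcal H_{\frac52}$, so it will then suffice to compare constant terms.

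\textbf{Step 1: modularity of the pieces.} First I will establish that $\mathcal H_{D,N}$, defined via \eqref{eqn:HDNcoeff}, is the holomorphic part of a weight $\frac32$ harmonic Maass form $\widehat{\mathcal H}_{D,N}$ on $\Gamma_0(4\delta)$ in the plus space. I plan to construct it as a linear combination of images of Zagier's completion $\widehat{\mathcal H}$ under $U$- and $V$-operators, together with genus-character twists indexed by $(D,N)$. Its nonholomorphic part is then a multiple of the Zagier correction, which involves the period integral of $\Theta$, twisted by the genus character.

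On the other side, $\Theta_Q|V_4$ has weight $1$ and level $4\delta$ with character $\chi_{-\delta}$.

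\textbf{Step 2: the product and level lowering.} Next I will form $\widehat{\mathcal H}_{D,N}|U_d\cdot \Theta_Q|V_4U_d$ and apply $U_{\delta/d}$. The sum over $d\mid\delta$ plays the role of the genus-dependent level-lowering operator (a trace from $\Gamma_0(4\delta)$ down to $\Gamma_0(4)$). I will verify this by computing how each summand transforms under the Atkin--Lehner involutions $W_p$ for $p\mid\delta$. For $p\mid D$ I expect a sign $-1$ from the genus character, since $\chi_D$ evaluated on $Q$ matches the twist in $h_{D,N}$; for $p\mid N$ I expect $+1$. In either case the sum is invariant, hence has level $4$. The plus-space condition should follow from $\delta\equiv 3\pmod4$ and the congruences $\delta n-4Q(\bm m)\equiv 0,3\pmod 4$.

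\textbf{Step 3: holomorphicity.} This is the main obstacle. The nonholomorphic part of the product is, coefficientwise, a sum of terms $\Gamma(-\tfrac12,4\pi m^2y)\,q^{-m^2}$ times theta coefficients of $Q$. I plan to show it cancels by combining two facts:
\begin{itemize}
\item after the $U$-operators, the shadow contribution equals a twisted product $\Theta\cdot\Theta_Q$ paired against the genus character;
\item for $D>1$ the genus character is nontrivial, so the corresponding twisted theta series of the genus-twisted binary form sums to zero over the orbit, i.e. the Siegel--Weil average of $\chi_D$ over the genus vanishes.
\end{itemize}
A more robust way to organize this is to apply $\xi_{3/2}$ to the whole expression. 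The image should be an Atkin--Lehner-anti-invariant combination, and the sum over $d$ with $(-1)^{\omega(\gcd(D,d))}$ kills it. If this bookkeeping is messy, the fallback is to show the nonholomorphic part is a weight $\frac52$ nonholomorphic Eisenstein-type object whose constant term vanishes, and then use that the plus space of weight $\frac52$ has no cusp forms.

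\textbf{Step 4: constants.} Once $F\in M_{5/2}^+(\Gamma_0(4))=\C\,\mathcal H_{\frac52}$, it remains to compute constant terms. Only $n=0$, $\bm m=0$ contributes. Each $d$-summand then gives $H_{D,N}(0)\cdot 1$, and there are $2^{\omega(\delta)}$ divisors, so the constant term of $F$ is $2^{\omega(\delta)}H_{D,N}(0)$. Since the constant term of $\mathcal H_{\frac52}$ is $\frac1{120}$, this yields the factor $120\cdot 2^{\omega(\delta)}H_{D,N}(0)$.
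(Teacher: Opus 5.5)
Your outer frame matches the paper's proof. You show that the left-hand side lies in $\sM_{\frac52}^+(\Gamma_0(4))=\C\,\mathcal H_{\frac52}$ and then compare constant terms. Your plus-space argument and Step 4 are correct.

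\textbf{The gap is in Step 2.} This is where the real work lies; the paper calls it the main technical difficulty.

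Knowing how the summands transform under $W_p$ for $p\mid\delta$, and noting that the sum is invariant, does not give level $4$. Invariance under Atkin--Lehner-type operators (determinant-$p$ matrices normalizing $\Gamma_0(4\delta)$) says nothing about invariance under $\Gamma_0(4)$. Newforms are Atkin--Lehner eigenforms, yet they do not come from lower level. Moreover, the products $f|U_d\cdot g|U_d$ are not $W_p$-eigenforms; $W_p$ relates them to $f|U_{dp}\cdot g|U_{dp}$.

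You rightly suspect the sum is a trace, but that identification is exactly what must be proved. The paper does it as follows.
\begin{itemize}
\item It uses the operators $\SS_{p,M}=U_p+c\,W_{p,M}$, which lower the level from $4M$ to $\frac{4M}{p}$ (Lemma \ref{lemma: level reduction}, modeled on Atkin--Lehner's Lemma 7).
\item With $f=\mathcal H_{D,N}$ and $g=\Theta_Q|V_4$, it shows by induction on the primes of $\delta$ that $(fg)|\SS_{p_1,\delta}\SS_{p_2,\frac{\delta}{p_1}}\cdots\SS_{p_\ell,p_\ell}=\sum_{d\mid\delta}(f|U_d\cdot g|U_d)|U_{\frac{\delta}{d}}$.
\item Each inductive step needs $f|U_dW_p$ and $g|U_dW_p$ as explicit multiples of $f|U_{dp}$ and $g|U_{dp}$ (Lemma \ref{lemma: UW on g}). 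Up to powers of $p$ and quadratic symbols, the multiples are $\varepsilon_p^{\pm1}\chi_{p^*}(\fg)$. This holds because a prescribed vanishing pattern of Fourier coefficients forces the $U_pW_p$-eigenvalue (Kohnen's Proposition 4, Lemma \ref{lemma: g eigenvalues}).
\end{itemize}

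The decisive input missing from your proposal is this: $\mathcal H_{D,N}\in\sM_{\frac32}^{\fg,-}(\Gamma_0(4\delta))$ (Proposition \ref{proposition: genus of HDN}) and $\Theta_Q|V_4\in\sM_1^{\fg,+}(\Gamma_0(4\delta),\chi_{-4\delta})$. These are the \emph{same} genus $\fg=\fg_{-\delta,D,N}$ but with \emph{opposite} signs. Then $\chi_{p^*}(\fg)$ occurs squared and $\varepsilon_p\varepsilon_p^{-1}=1$, so the $W_p$-part of $\SS_p$ produces exactly $f|U_p\cdot g|U_p$. Your sign heuristic ($-1$ for $p\mid D$, $+1$ for $p\mid N$) is therefore not the mechanism. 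The genus character cancels against itself; what matters is the opposite $\pm$.

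\textbf{Step 3 addresses a problem that does not arise.} The argument you sketch there would not settle it anyway. For $D>1$, $\mathcal H_{D,N}$ is already a holomorphic modular form on $\Gamma_0(4\delta)$ (Proposition \ref{proposition: HDN modular}(2)). By Lemma \ref{lem:HDNtoHDNp}(2), $\mathcal H_{p,N}=f-f_p$ with $f=\widehat{\mathcal H}_N$ and $f_p$ as in Lemma \ref{lemma: mock to modular}. Here $f$ and $f_p$ have the same nonholomorphic part, and further primes of $D$ are added by the same recipe. So the shadow vanishes before you ever multiply by $\Theta_Q|V_4$. Done correctly, your Step 1 would already show that the genus twist kills the Zagier correction.

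The Siegel--Weil vanishing you invoke concerns an average over the genus, not the single form $Q$ in the statement. The $\xi$-operator and Eisenstein fallbacks are not arguments. Step 3 can simply be deleted, but then the whole proof rests on Step 2, which as written is asserted rather than proved.
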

To establish Theorem \ref{thm:D>1}, we generalize the modularity of $\mathcal{H}$ in Proposition \ref{proposition: HDN modular} to $\mathcal H_{D,N}$. For $(1,\delta)\in \mathscr{S}_{\delta}$ and $Q\in\fg_{-\delta,1,\delta}$, the left-hand side of \Cref{theorem: D>1} is no longer a modular form. To prove Theorem \ref{thm:PrincipalGenus}, we show that replacing $\Theta_Q$ by a suitable modification of the  theta function and then restricting to certain residue classes of exponents yields a modular form. To state our result, for $f(\tau)=\sum_{n\gg -\infty} c(n) q^n$, we require the {\it sieving operator} $(M,m\in\N)$ $f|S_{M,m}(\tau):=\!\smash{\underset{n\equiv m\Pmod{M}}{\sum}}\! c(n)q^n$. For $Q\in\mathcal{Q}_{-\delta}$, we let
\[
\Theta_{Q,M,m}:=\Theta_Q|S_{M,m}
\]
and define the functions
  \begin{align*}
  	G_{Q,1}:&=\begin{cases}
    \displaystyle
    \sum_{d|\delta}\Big(\mathcal H_\delta|U_d
      \cdot \Theta_{Q,2,0}|V_4 U_d\Big)
      |U_{\frac{\delta}{d}}S_{8,1} &\text{if }\delta\equiv 3\Pmod 8, \\
    \displaystyle
    \sum_{d|\delta}\Big(\mathcal H_\delta|U_d
      \cdot \Theta_{Q,2,1}|V_4 U_d\Big)
      |U_{\frac{\delta}{d}}S_{8,1} &\text{if }\delta\equiv 7\Pmod 8,
    \end{cases}\\
  G_{Q,5}:&=\begin{cases}
    \displaystyle
    \sum_{d|\delta}\Big(\mathcal H_\delta|U_d
      \cdot\Theta_{Q,2,1}|V_4 U_d\Big)
      |U_{\frac{\delta}{d}}S_{8,5} &\text{if }\delta\equiv 3\Pmod 8, \\
    \displaystyle
    \sum_{d|\delta}\Big(\mathcal H_\delta|U_d
      \cdot \Theta_{Q,2,0}|V_4 U_d\Big)
      |U_{\frac{\delta}{d}}S_{8,5} &\text{if }\delta\equiv 7\Pmod 8.
    \end{cases}
  \end{align*}
\begin{Theorem} \label{theorem: principal genus}
  Let $N\in\N$ be squarefree with $N\equiv 3\Pmod 4$. For $Q\in \fg_{-N,1,N}$, $G_{Q,1}$ and $G_{Q,5}$ are modular forms in Kohnen's plus-space $\sM_{\frac{5}{2}}^+(\Gamma_0(16))$.  More precisely, we have
  $$
   G_{Q,j}=-[j]_8 2^{\omega(N)}\sigma_1(N)\mathcal H_{\frac{5}{2}}|S_{8,j}.
  $$
\end{Theorem}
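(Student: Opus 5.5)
The plan has three stages: show that each $G_{Q,j}$ is modular, show that it lies in the plus space and is orthogonal to cusp forms, and then compare constant terms.

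\textbf{Stage 1: modularity of the completion.} Replace $\mathcal H_\delta$ by its non-holomorphic completion $\widehat{\mathcal H}_\delta$. This is the genus-dependent analogue of Zagier's completion, supplied by Proposition \ref{proposition: HDN modular} with $D=1$, $N=\delta$. It is modular of weight $\frac32$ on $\Gamma_0(4\delta)$, possibly with a character. Its non-holomorphic part is a multiple of the Zagier-type period integral of a unary theta function, with coefficients supported on $-n=$ square.

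The theta factor $\Theta_{Q,2,m}|V_4$ is a weight one form. Sieving by parity of $Q(\bm m)$ and then applying $V_4$ keeps modularity, at the cost of raising the level to something like $\Gamma_0(16\delta)$ with the character $\chi_{-\delta}$. Applying $U_d$ to both factors, forming the product, applying $U_{\delta/d}$ and summing over $d\mid\delta$ is, up to normalisation, a trace from level $16\delta$ down to level $16$. This is exactly the "genus-dependent level-lowering operator" mentioned in the introduction. So I would first check that $\widehat G_{Q,j}$ transforms with weight $\frac52$ on $\Gamma_0(16)$; the final sieve $S_{8,j}$ is a twist by additive characters mod $8$ and preserves level $16$ up to the usual conjugation.

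\textbf{Stage 2: the non-holomorphic part vanishes.} This is the step I expect to be the main obstacle. The non-holomorphic part of $\widehat G_{Q,j}$ is a sum over $d$ of products of a unary-theta Eichler integral with $\Theta_Q$. After the $U$-operators, it becomes a sum over pairs $(t,\bm m)$ of incomplete gamma terms whose exponents have the shape $\delta n = -t^2+4Q(\bm m)$. These terms must cancel, and the cancellation comes from $Q$ lying in the principal genus $\fg_{-N,1,N}$. I would try the following:
\begin{itemize}
\item use that for the principal genus the representation numbers of $4Q$ agree modulo the relevant sieve with those of the principal form $x^2+xy+\frac{N+1}{4}y^2$;
\item then use that the representations of $t^2$ by the ternary form $-t^2+4Q$ attached to the principal genus are symmetric under a sign flip coming from the genus character $\chi_{-\delta}$ at the primes dividing $\delta$.
\end{itemize}
Concretely, the parity sieve $\Theta_{Q,2,m}$ together with $S_{8,j}$ should select exactly the residue classes where the holomorphic-projection obstruction is killed by the factor $(1+\chi(\cdot))$. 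This is presumably why the sieve parity flips between $\delta\equiv 3$ and $\delta\equiv 7\pmod 8$. Alternatively, one can compute the image under the $\xi_{5/2}$-operator, which is a weight $-\frac12$ antiholomorphic theta-type object on $\Gamma_0(16)$. I would show it is zero by checking that its few possible Fourier coefficients vanish, using Theorem-level bounds on dimensions of such spaces.

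\textbf{Stage 3: plus space, no cusp part, and constant terms.} Once $G_{Q,j}$ is holomorphic, the support $n\equiv 1\pmod 4$ puts it in Kohnen's plus space $\sM_{\frac52}^+(\Gamma_0(16))$. Next I must show $G_{Q,j}$ is Eisenstein. One route is to check that its cusp-form projection is orthogonal to everything via the Shimura lift; a simpler route is to use that $\mathcal H_{\frac52}|S_{8,j}$ and a small explicit basis span the relevant space, and compare a Sturm-bound number of coefficients.

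Finally I compare constant terms at the cusps. At $\infty$, the $q^0$ coefficient of $G_{Q,1}$ comes from the term $H_\delta(0)\cdot 1$ summed over $d\mid\delta$. Since $H_{1,\delta}(0)=-\frac{1}{12}\sigma_1(\delta)$, this gives $2^{\omega(N)}\cdot(-\frac1{12})\sigma_1(N)$. With $H(2,0)=\zeta(-3)=\frac1{120}$ and the factor $[1]_8=1$, this matches the claimed constant $-[j]_8\,2^{\omega(N)}\sigma_1(N)\mathcal H_{\frac52}|S_{8,j}$. For $j=5$ the constant term at $\infty$ vanishes, so I would instead compare the constant at the cusp $0$ (or $\frac12$). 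This explains the extra factor $[5]_8=5$ produced by the sieve twist.
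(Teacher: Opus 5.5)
Your proposal has a genuine gap in Stage 3, which is where the ``more precisely'' part of the theorem lives. $G_{Q,1}$ and $\mathcal H_{\frac52}|S_{8,1}$ are both supported on exponents $n\equiv 1\pmod 8$. Hence both of their constant terms at $\infty$ are $0$: the term $2^{\omega(N)}H_N(0)$ that you compute belongs to the unsieved function, and $S_{8,1}$ removes it exactly as $S_{8,5}$ does. The claimed match is also numerically false. You have $2^{\omega(N)}H_N(0)=-\frac1{12}2^{\omega(N)}\sigma_1(N)$, whereas $-2^{\omega(N)}\sigma_1(N)H(2,0)=-\frac1{120}2^{\omega(N)}\sigma_1(N)$. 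The unsieved function is not a multiple of $\mathcal H_{\frac52}$, so its $q^0$-term tells you nothing about the sieved pieces.

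Your fallback for $j=5$, comparing constant terms at $0$ or $\frac12$, would need the expansions of $\widehat{\mathcal H}_N$ and $\Theta_Q$ at cusps of level $16N$, and you do not carry this out. Your plan also never explains why the constant is independent of $Q$ within the genus.

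The paper argues as follows:
\begin{enumerate}
\item Once $G_{Q,j}=\widehat G_{Q,j}\in\sM_{\frac52}(\Gamma_0(16))$, the subspace of forms supported on $n\equiv j\pmod 8$ is one-dimensional. So no Eisenstein projection is needed, and no Sturm comparison, which could not be done uniformly in $N$ and $Q$ anyway.
\item The constant is independent of $Q$ because $\Theta_Q-\Theta_{\mathcal Q}$ is cuspidal for the principal form $\mathcal Q$, while $\mathcal H_{\frac52}|S_{8,j}$ is not (Lemma \ref{lem:HSn8-notcusp}).
\item For $\mathcal Q$, the constant comes from the $q^1$ and $q^5$ coefficients via Proposition \ref{prop:Hrewrite}. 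There only $n_2=0$ (resp.\ $|n_2|\le 2$) contributes, and the resulting unary sums are evaluated by \eqref{eqn:deltaeval} and Lemma \ref{lem:UnarySum}. Up to the common factor $2^{\omega(N)}$ they equal $\frac1{12}\sigma_1(N)$ and $2\sigma_1(N)$.
\item With $H(2,1)=-\frac1{12}$ and $H(2,5)=-\frac25$ this gives $-1$ and $-5$.
\end{enumerate}

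Stage 2 misidentifies the mechanism, and your first bullet there is false. Forms in one genus generally have different representation numbers; what holds is only that $\Theta_Q-\Theta_{Q'}$ is cuspidal. No cancellation and no genus input is needed:
\begin{itemize}
\item The non-holomorphic part of $\widehat{\mathcal H}_N$ is supported on $q^{-t^2}$, and $\Theta_{Q,2,m}|V_4$ on exponents $\equiv 4m\pmod 8$.
\item After the $U$-operators, such terms reach the $n$-th coefficient only if $Nn\equiv 4m-t^2\pmod 8$.
\item For each of the four choices of $(N \bmod 8, j, m)$ in the definition of $G_{Q,j}$, this forces $t^2\equiv 5\pmod 8$, which is impossible.
\end{itemize}

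The principal-genus hypothesis is needed in Stage 1, where you only gesture at a ``trace''. The sum over $d\mid N$ equals the composite of the operators $\mathcal L_{p,M}=U_p+c\,W_{p,M}$ only because $\widehat{\mathcal H}_N\in\sH_{\frac32}^{\fg,-}$ and $\Theta_{Q,2,m}|V_4\in\sM_1^{\fg,+}$ for the \emph{same} genus $\fg$ (Proposition \ref{proposition: genus of HDN}, Lemma \ref{lem:genuscharnotrelprime}). That is what lets Lemma \ref{lemma: UW on g} convert the $W_p$-terms into $U_{dp}$-terms, as in Theorem \ref{theorem: level reduction}.

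Finally, $S_{8,j}$ does not automatically preserve level $16$. That is Lemma \ref{lemma: level 16}, which requires weight $\frac52$ (so $k=2$ is even) and $j\in\{1,5\}$.
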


\begin{remark}
	Theorems \ref{theorem: D>1} and \ref{theorem: principal genus} reflect a more general phenomenon described in Theorem \ref{theorem: level reduction}. For example, one may use Theorem \ref{theorem: level reduction} to generalize \eqref{eqn:Eichler} to class number relations coming from theta functions with spherical polynomials (see Section \ref{sec:further}).
\end{remark}

The paper is organized as follows. In Section \ref{sec:prelims}, we recall some preliminaries about modular forms and mock modular forms, as well as operators acting on them, and introduce Hurwitz class numbers and genus theory for binary quadratic forms. In Section \ref{sec:HDNmodular}, we prove modular properties for the generating functions $\mathcal{H}_{D,N}$. Motivated by vanishing properties of their Fourier coefficients, we then define a space of modular objects related to genera of binary quadratic forms in Section \ref{sec:GenusSubspace} and construct a level-lowering operator on this subspace in Section \ref{sec:mainthm}. We prove Theorems \ref{thm:D>1} and \ref{theorem: D>1} in Section \ref{sec:MainD>1} and Theorems \ref{thm:PrincipalGenus} and \ref{theorem: principal genus} in Section \ref{sec:MainPrincipal}. We conclude the paper by discussing further identities obtained by these methods in Section~\ref{sec:further}.

\section*{Acknowledgements}
The authors thank F. Villegas-Rodriguez, who studied the case $D=1$ using intersection theory, for suggesting the class number relations given in this paper. 
The first author has received funding from the European Research Council (ERC) under the European Union’s Horizon 2020 research and innovation programme (grant agreement No. 101001179). The research of the third author was supported by grants from the Research Grants Council of the Hong Kong SAR, China (project numbers HKU 17314122, HKU 17305923).

\section{Preliminaries}\label{sec:prelims}

\subsection{Modular forms}\label{sec:ModularForms}
We adapt Kohnen's definition \cite[Section 1c)]{Kohnen-newform} of modular forms of half-integral weight to simultaneously treat integral and half-integral weights. We let $\mathfrak G_{\kappa}$ be the group of pairs $(\gamma,\phi)$, where $\gamma=\begin{psmallmatrix}
a&b\\c&d
\end{psmallmatrix}\in\GL_2^+(\R)$ and $\phi$ is a holomorphic function on $\H$ satisfying
$$
|\phi(\tau)|=\det (\gamma)^{-\frac{\kappa}{2}}|c\tau+d|^{\kappa}
$$
with group law defined by
$$
  \left(\gamma_1,\phi_1(\tau)\right)  \left(\gamma_2,\phi_2(\tau)\right):=\left(\gamma_1\gamma_2,\phi_1\left(\gamma_2 \tau\right)\phi_2(\tau)\right).
$$
This allows us to define a unified slash operator. For $f:\H\to\C$, we let the group algebra of $\mathfrak G_\kappa$ over $\C$ act on $f$ via (for $(\gamma_j,\phi_j)\in\mathfrak G_\kappa$ and $c_j\in\C$)
$$
f|\sum_{j}c_j\left(\gamma_j,\phi_j\right)(\tau):=\sum_{j}c_j \phi_j^{-1}(\tau)f\left(\gamma_j\tau\right).
$$
To obtain a slash operator depending only on $\Gamma\subseteq\SL_2(\Z)$, we next construct an element of $\mathfrak{G}_{\kappa}$ for each $\gamma\in\Gamma$. Following the standard convention, for odd  \( d \), let $\varepsilon_d := 1 $ if $d \equiv 1 \pmod{4}$ and $\varepsilon_d := i$ if $d \equiv 3 \pmod{4}$. For \( \gamma=\begin{psmallmatrix} a & b \\ c & d \end{psmallmatrix} \in \SL_2(\Z) \) and \( \kappa \in \tfrac{1}{2}\Z \) ($\gamma\in\Gamma_0(4)$ if $\kappa\notin\Z$), set\footnote{We suppress the dependence on \(\kappa\) in the notation, since the weight is always clear from the context.}
\begin{equation}\label{eqn:automorphy}
\phi_\gamma(\tau):= \begin{cases}\JS{c}{d}^{\,2\kappa}\
\varepsilon_d^{-2\kappa}\,
(c\tau+d)^{\kappa} \quad&\text{if}\  \gamma\in \Gamma_0(4),\\
(c\tau+d)^\kappa &\text{if}\  \gamma\notin \Gamma_0(4)\ \text{and}\ \kappa\in\Z.
\end{cases}
\end{equation}
We then let $\gamma^{*} :=(\gamma,\phi_\gamma)\in \mathfrak{G}_{\kappa}$. Hence $\Gamma_0(N)$ (with $4\mid N$ if $\kappa\notin\Z$) may be realized as a subgroup of $\mathfrak G_{\kappa}$ via $\gamma\mapsto\gamma^*=(\gamma,\phi_\gamma)$,  for $\phi_\gamma$ defined in \eqref{eqn:automorphy}.

 Let \( f : \H \to \C \). Since $\gamma^*$ is uniquely determined by $\gamma$,  we define the \emph{weight \( \kappa \) slash operator} by 
\[
f|_{\kappa}\gamma(\tau):=f|\gamma^*(\tau)=\phi_\gamma^{-1}(\tau)f(\gamma\tau).
\]
Let \( \chi \) be a Dirichlet character $\pmod{N}$.  Then $\chi$ may be regarded as a character $\Psi_\chi$ on $\Gamma_0(N)$ by letting $\Psi_\chi\left(\begin{smallmatrix}a&b\\c&d\end{smallmatrix}\right)=\chi(d)$. 
We say that \( f \) is \emph{ modular of weight \( \kappa \)  on \( \Gamma_0(N) \) with character \( \chi \)} if
\begin{equation}\label{eqn:slashconvertnew}
f|_\kappa\gamma = 
\begin{cases}
\varepsilon^{2\kappa}_d\chi(d)f&\text{if }\kappa\in\Z\ \text{and}\ 4\mid N,\\
\chi(d)f&\text{otherwise},\\
\end{cases}
\end{equation}
for every $\gamma\in\Gamma_0(N)$. A holomorphic function $f: \mathbb{H} \rightarrow \mathbb{C} $ is a \emph{(holomorphic) modular form of weight \( \kappa \) on \( \Gamma_0(N) \) with Nebentypus character \( \chi \)} if the following conditions hold:
\begin{enumerate}[label=(\arabic*), leftmargin=*]

\item \( f \) satisfies the transformation law \eqref{eqn:slashconvertnew},
\item $f$ is bounded at the cusps. 
\end{enumerate}

\noindent A holomorphic modular form \( f \) is called a \emph{cusp form} if it vanishes at the cusps. We denote by
$
\sM_{\kappa}(\Gamma_0(N),\chi)
$
and
$
\sS_{\kappa}(\Gamma_0(N),\chi)
$
the spaces of modular forms and cusp forms of weight \( \kappa \) and character \( \chi \) on \( \Gamma_0(N) \), respectively.
We suppress the dependence on \( \chi \) (resp.\ \( N \)) if \( \chi \) is trivial (resp.\ \( N=1 \)). By translation invariance, it has a Fourier expansion of the form
$
f(\tau)=\sum_{n\in\Z} c(n)q^n.
$
For use below, we also recall Kohnen's plus space. If \( \kappa \in \Z+\tfrac{1}{2} \), then \emph{Kohnen's plus space} consists of those modular forms whose Fourier coefficients \( c(n) \) vanish unless $(-1)^{\kappa-\frac12} n \equiv 0,1 \pmod{4}.$

\subsection{Mock modular forms and harmonic Maass forms}\label{sec:mockHarmonic}

Let $\kappa\in\frac12\Z$, $N\in\N$, and $\chi$ be a character (mod $N$) (with $4\mid N$ if $\kappa\notin\Z$). A real-analytic function $f:\H\to \C$ is \begin{it} modular of weight $\kappa$ on $\Gamma_0(N)$ with character $\chi$\end{it} if \eqref{eqn:slashconvertnew} holds for every $\gamma\in\Gamma_0(N)$. It has \begin{it}moderate growth at the cusps\end{it} if $f(\tau)=O(\tau_2^r)$ as $\tau_2\to\infty$ for some $r\geq 0$ (throughout $\tau=\tau_1+i\tau_2$) and analogous growth conditions hold at the other cusps. We let $\mM_{\kappa}(\Gamma_0(N),\chi)$ be the space of real-analytic modular forms of weight $\kappa$ and character $\chi$ on $\Gamma_0(N)$. By translation-invariance, $f\in\mM_{\kappa}(\Gamma_0(N),\chi)$ has a Fourier expansion of the shape 
$
f(\tau)=\sum_{n\in\Z} c_{\tau_2}(n)q^n.
$
A weight $\kappa$ real-analytic modular form $f$ is a \begin{it}harmonic Maass form\end{it} if it satisfies the following additional conditions: 
\begin{enumerate}
\item The function $f$ is annihilated by the \emph{weight $\kappa$ hyperbolic Laplace operator }
\[
\Delta_{\kappa}:=- \tau_2^2\left(\frac{\partial^2}{\partial \tau_1^2}+\frac{\partial}{\partial \tau_2^2}\right) + i\kappa \tau_2\left(\frac{\partial}{\partial\tau_1}+i\frac{\partial}{\partial\tau_2}\right).
\]
\item There exists a polynomial $P$ and $c^-(0)\in\R$ such that $f(\tau) = P(q^{-1})+ c^-(0) \tau_2^{1-\kappa}+O(1)$ as $\tau_2\to\infty$. Analogous conditions are required at the other cusps.
\end{enumerate}
Let $\sH_{\kappa}(\Gamma_0(N),\chi)$ be the space of harmonic Maass forms of weight $\kappa$ and character $\chi$ on $\Gamma_0(N)$. 
The Fourier expansion of a harmonic Maass form $f\in \sH_{\kappa}(\Gamma_0(N),\chi)$ splits as
\[
f(\tau)=f^+(\tau)+ f^{-}(\tau),
\]
where
$
f^+(\tau):=\sum_{n\gg -\infty} c^{+}(n) q^n 
$
is the \begin{it}holomorphic part\end{it} of $f$ and 
\[
f^-(\tau):=c^{-}(0)\tau_2^{1-\kappa}+ \sum_{n<0}c^{-}(n) \Gamma\left(1-\kappa,-4\pi n\tau_2\right) q^{n}
\]
is the \begin{it}non-holomorphic part\end{it} of $f$. Here, for $y>0$, $\Gamma(s,y):=\int_{y}^{\infty} t^{s-1}e^{-t}dt$ is the \begin{it}incomplete gamma function\end{it}.  A \begin{it}mock modular form\end{it} is the holomorphic part of some\ harmonic Maass form. A classical example is the {\it quasimodular Eisenstein series of weight two}
\[
E_2(\tau):=1+24\sum_{n\ge1} \sigma_1(n)q^n.
\]
 Namely, we have
\[
\widehat E_2(\tau):=E_2(\tau)-\frac{3}{\pi \tau_2}\in \sH_2.
\]

\subsection{Operators on (non-holomorphic) modular forms}\label{sec:operators} 
 For a translation-invariant function $f(\tau)=\sum_{n\in\Z}$ $c_{\tau_2}(n)q^n$ and $d\in\N$, we let $U_d$ and $V_d$ be the classical $U$-and $V$ {\it operators }
$$
f| U_d(\tau):=\sum_{n\in\Z}c_{\frac{\tau_2}{d}} (dn)q^n, \qquad
f| V_d(\tau):=\sum_{n\in\Z}c_{d\tau_2}(n)q^{dn}.
$$
Combining these, for $f\in \sH_{\frac{3}{2}}(\Gamma_0(4N))$ and a prime $p$ coprime to $N$, we define the {\it Hecke operator}
\[
f|T_{p^2}(\tau):=\sum_{n\in\Z} \left( c_{\frac{\tau_2}{p^2}}\left(p^2n\right) +\JS{-n}{p} c_{\tau_2}(n) +p c_{p^2\tau_2}\left(\frac{n}{p^2}\right) \right)q^n,
\]
where we set $c_{\tau_2}(x):=0$ if $x\notin\Z$. The following lemma is used repeatedly. 
\begin{Lemma}\label{lem:Up2Tp2rewrite}
We have 
\begin{align*}
  f|U_{p^2}(\tau)&=\frac1{p^2}\sum_{j=0}^{p^2-1}f\left(\frac{\tau+j}{p^2}\right), \\
  f|T_{p^2}(\tau)&=\frac1{p^2}\sum_{j=0}^{p^2-1}f\left(\frac{\tau+j}{p^2}\right)
  +\frac{\varepsilon_p^3}{\sqrt p}\sum_{j=1}^{p-1}\JS{-j}pf\left(\tau+\frac jp\right)+pf\left(p^2\tau\right).
\end{align*}
\end{Lemma}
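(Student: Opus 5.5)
The first identity is just the finite Fourier analysis behind the $U$-operator. I will expand $f$ termwise and swap the finite $j$-sum with the $n$-sum; absolute convergence locally uniformly in $\tau$ justifies this. The second identity then follows once the middle ("twisted") term of $T_{p^2}$ is written as an additive twist. Throughout I write $f(\tau)=\sum_n c_{\tau_2}(n)q^n$, where the coefficient depends on $\tau_2$ through the imaginary part of the argument of $f$.

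For the $U_{p^2}$ formula, I substitute $\tau\mapsto (\tau+j)/p^2$. The imaginary part becomes $\tau_2/p^2$, so
\[
f\!\left(\tfrac{\tau+j}{p^2}\right)=\sum_n c_{\tau_2/p^2}(n)e^{2\pi i n\tau/p^2}e^{2\pi i nj/p^2}.
\]
Summing over $0\le j<p^2$, the inner sum $\sum_j e^{2\pi i nj/p^2}$ equals $p^2$ if $p^2\mid n$ and $0$ otherwise. Only $n=p^2m$ survives, which gives $\sum_m c_{\tau_2/p^2}(p^2m)q^m=f|U_{p^2}(\tau)$, exactly as in the definition. The third term of $T_{p^2}$ is equally direct: $pf(p^2\tau)=p\sum_m c_{p^2\tau_2}(m)q^{p^2m}$, and reindexing $n=p^2m$ gives $p\,c_{p^2\tau_2}(n/p^2)$ with the stated convention that it vanishes for $p^2\nmid n$.

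For the middle term, I use that $\tau\mapsto\tau+j/p$ preserves $\tau_2$. This gives
\[
\sum_{j=1}^{p-1}\JS{-j}{p}f\!\left(\tau+\tfrac jp\right)=\sum_n c_{\tau_2}(n)q^n\sum_{j=1}^{p-1}\JS{-j}{p}e^{2\pi i nj/p}.
\]
The inner sum is a twisted Gauss sum. If $p\mid n$ it is $\sum_j\JS{-j}{p}=0$. Otherwise, substituting $j\mapsto j n^{-1}$ turns it into $\JS{-n^{-1}}{p}\JS{-1}{p}\mathfrak g_p=\JS{n}{p}\mathfrak g_p$, where $\mathfrak g_p=\sum_j\JS jp e^{2\pi i j/p}=\varepsilon_p\sqrt p$ by Gauss's evaluation ($p$ odd). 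Multiplying by $\varepsilon_p^3/\sqrt p$ gives $\varepsilon_p^4\JS{n}{p}=\JS np$. The required coefficient is $\JS{-n}{p}=\JS{-1}{p}\JS np=\varepsilon_p^2\JS np$, so I need to recheck the bookkeeping.

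That sign bookkeeping is the only real obstacle. I would recompute the twisted sum as $\sum_j\JS{-j}p\zeta^{nj}=\JS{-1}{p}\JS{n}{p}\mathfrak g_p$. With $\JS{-1}p=\varepsilon_p^2$ this becomes $\varepsilon_p^2\JS np\varepsilon_p\sqrt p=\varepsilon_p^3\JS np\sqrt p$. The normalizing factor should then be $\varepsilon_p^{-1}/\sqrt p=\varepsilon_p^3/\sqrt p$ times $\JS{-1}{p}$, i.e. so that the product is exactly $\JS{-n}{p}$. In other words, the check reduces to the identity $\varepsilon_p^{3}\cdot\varepsilon_p^{3}=\varepsilon_p^{6}=\varepsilon_p^2=\JS{-1}{p}$, using $\varepsilon_p^4=1$. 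With that in hand the constant matches and the middle term has coefficient $\JS{-n}{p}c_{\tau_2}(n)$.

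The lemma implicitly assumes $p$ is odd, since $\varepsilon_p$ is only defined for odd $p$. For $p=2$ the $U$-part is unaffected, but the Gauss-sum evaluation does not apply.
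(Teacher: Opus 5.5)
Your proof is correct and is essentially the paper's argument: orthogonality of $p^2$-th roots of unity for $U_{p^2}$, and the quadratic Gauss sum $\sum_{j\Pmod p}\JS jp e^{2\pi i aj/p}=\varepsilon_p\JS ap\sqrt p$ for the twisted middle term of $T_{p^2}$. In the write-up, drop your first attempt, since the substitution $j\mapsto jn^{-1}$ gives $\JS{-n}{p}\varepsilon_p\sqrt p$ directly with no extra factor $\JS{-1}{p}$, and also drop the garbled ``normalizing factor'' sentence; then simply note that $\frac{\varepsilon_p^3}{\sqrt p}\cdot\JS{-n}{p}\varepsilon_p\sqrt p=\varepsilon_p^4\JS{-n}{p}=\JS{-n}{p}$.
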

\begin{proof}
The identity for $U_{p^2}$ follows by orthogonality of roots of unity. Recall that
\begin{equation}\label{eqn:GaussSum}
	\sum_{j\Pmod{p}}\left(\frac{j}{p}\right)e^{\frac{2\pi i a j}{p}} =\varepsilon_p\left(\frac{a}{p}\right) \sqrt{p}
\end{equation} for $a\in \mathbb{Z}$,  $p$ an odd prime. Applying \eqref{eqn:GaussSum} yields the identity for $T_{p^2}$.
\end{proof}

\noindent Modular properties of $f|T_{p^2}$ and $f|U_{p^2}$ follow as in \cite[Theorem 1.7, Proposition 1.5]{Shimura-correspondence} (see also \cite[Propositions 3.4 and 3.7]{OnoBook}). In particular these operators preserve spaces of harmonic Maass forms. 
\begin{Lemma}\label[Lemma]{lem:Tp2Up2}
Suppose that $f\in \sH_{\frac{3}{2}}(\Gamma_0(4N))$ and $p$ is coprime to $N$. Then $f|T_{p^2}\in \sH_{\frac{3}{2}}(\Gamma_0(4N))$ and $f|U_{p^2}\in \sH_{\frac{3}{2}}(\Gamma_0(4pN))$.
\end{Lemma}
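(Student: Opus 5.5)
The plan is to use Lemma \ref{lem:Up2Tp2rewrite}, which writes both operators as finite linear combinations of slashed translates of $f$. That reduces everything to two tasks: checking the weight $\frac32$ transformation law on the stated groups, and checking that harmonicity and the growth conditions are preserved.

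The local conditions are immediate. The operator $\Delta_{\frac32}$ commutes with translations $\tau\mapsto\tau+j/p$, and also with the maps $\tau\mapsto (\tau+j)/p^2$ and $\tau\mapsto p^2\tau$, up to the constant factor already absorbed in the normalization. So each summand of Lemma \ref{lem:Up2Tp2rewrite} is annihilated by $\Delta_{\frac32}$.

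For the growth condition we work on Fourier expansions. The expansion of $f|U_{p^2}$ has coefficients $c_{\tau_2/p^2}(p^2n)$. Since the nonholomorphic coefficients are incomplete gamma functions, these are again of the required shape $c^-(p^2 n)\Gamma(-\frac12,-4\pi n\tau_2)$, up to the constant term $\tau_2^{-1/2}$, which is rescaled by a constant. Hence the principal part at $\infty$ stays polynomial in $q^{-1}$. The same holds for $T_{p^2}$. At the other cusps we use the transformation law (below) to move each cusp to $\infty$ and apply the same argument to the translates.

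For modularity, the key identity is the standard coset decomposition in $\mathfrak G_{\frac32}$. The matrices $\begin{psmallmatrix}1&j\\0&p^2\end{psmallmatrix}$ together with suitable lifts give double-coset representatives for $\Gamma_0(4N)\,\alpha\,\Gamma_0(4N)$ with $\alpha=\begin{psmallmatrix}1&0\\0&p^2\end{psmallmatrix}$. This is exactly Shimura's construction of $T_{p^2}$ \cite[Theorem 1.7]{Shimura-correspondence}, and the middle Gauss-sum term is what the $\phi$-factors contribute in the lift to $\mathfrak G_{\frac32}$. Since Shimura's argument is purely group-theoretic (right multiplication by $\gamma^*$ permutes the cosets), it applies verbatim to real-analytic $f$. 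This gives $f|T_{p^2}|_{\frac32}\gamma^*=f|T_{p^2}$ for $\gamma\in\Gamma_0(4N)$, with $p\nmid 4N$.

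For $U_{p^2}$ the argument is the same but only for $\gamma\in\Gamma_0(4pN)$. For such $\gamma=\begin{psmallmatrix}a&b\\c&d\end{psmallmatrix}$ with $p\mid c$, we have $\begin{psmallmatrix}1&j\\0&p^2\end{psmallmatrix}\gamma=\gamma'\begin{psmallmatrix}1&j'\\0&p^2\end{psmallmatrix}$ with $\gamma'\in\Gamma_0(4N)$ and $j\mapsto j'$ a permutation of residues mod $p^2$. This is the computation in \cite[Proposition 1.5]{Shimura-correspondence} and \cite[Proposition 3.7]{OnoBook}.

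The main obstacle is tracking the automorphy factors $\phi_\gamma$: one has to verify that $\left(\frac{c}{d}\right)\varepsilon_d^{-1}$ matches $\left(\frac{c'}{d'}\right)\varepsilon_{d'}^{-1}$ after the permutation. Here $d'\equiv d$ modulo the relevant power of $p$, and $p^2$ is a square, so the quadratic-residue symbols agree. If $p$ were allowed to divide $N$ in the $T_{p^2}$ case, this bookkeeping would break, which explains the coprimality hypothesis.
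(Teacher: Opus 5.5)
Your proposal is correct and follows the same route as the paper. The paper gives no separate argument: it defers to Shimura's Theorem 1.7 and Proposition 1.5 (and Ono's Propositions 3.4 and 3.7) and notes that the coset-permutation argument carries over to harmonic Maass forms. You additionally spell out the harmonicity and cusp-growth checks that the paper leaves implicit.

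There are three small points of wording to fix:
\begin{enumerate}
\item The double coset for $T_{p^2}$ also contains the representatives $\begin{psmallmatrix}p&j\\0&p\end{psmallmatrix}$ with $p\nmid j$, and $\begin{psmallmatrix}p^2&0\\0&1\end{psmallmatrix}$, not only the matrices $\begin{psmallmatrix}1&j\\0&p^2\end{psmallmatrix}$.
\item For $U_{p^2}$, the congruence that makes the multipliers match is $d'=d-cj'\equiv d\Pmod{c}$, together with $p^2$ being a square. It is not a congruence modulo a power of $p$.
\item At the other cusps, the transformation law cannot move a cusp to $\infty$. Instead write $\begin{psmallmatrix}1&j\\0&p^2\end{psmallmatrix}\sigma=\sigma_j\beta_j$ with $\sigma_j\in\SL_2(\Z)$ and $\beta_j$ upper triangular, and use the growth of $f$ at every cusp.
\end{enumerate}
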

Following the proof of \cite[Lemma 5]{WinnieLi}, we obtain a criterion for modular forms whose Fourier expansions are supported on multiples of $p$.
\begin{Lemma}\label[Lemma]{lem:Vpvanish}
Let $k$, $N\in\N$. Suppose that there exists a prime $p\mid N$ with $p^2\nmid N$ and  $f(\tau)=\sum_{n\ge 0} c_{\tau_2}(n)q^{pn}\in \mM_k(\Gamma_0(4N),\chi_{(-1)^k4N})$. Then $f=0$.
\end{Lemma}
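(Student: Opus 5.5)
The plan follows the classical argument of Li for integral weight newforms and runs through the operator $W_p$ attached to the prime $p$. Write $N=pM$ with $p\nmid M$. The hypothesis says $f=g|V_p$, where $g(\tau):=\sum_{n\ge0}c_{\tau_2/p}(n)q^{n}$, so $g(\tau)=f(\tau/p)$. The first step is to show that $g$ is modular of weight $k$ on $\Gamma_0(4M)$. Since $f(\tau+1/p)=f(\tau)$, the function $f$ is invariant under $\left(\begin{smallmatrix}1&1/p\\0&1\end{smallmatrix}\right)$. Conjugating by $\left(\begin{smallmatrix}p&0\\0&1\end{smallmatrix}\right)$ shows that $g$ is invariant under $T=\left(\begin{smallmatrix}1&1\\0&1\end{smallmatrix}\right)$. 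It also transforms like a form on $\Gamma_0(4M)\cap\Gamma^0(p)$, with character $\chi_{(-1)^k4N}$ pulled back. These two subgroups generate $\Gamma_0(4M)$, because $\Gamma_0(4M)\cap\Gamma^0(p)$ has index $p+1$ in $\Gamma_0(4M)$ and adding $T$ acts transitively on the cosets, which correspond to $\mathbb P^1(\mathbb F_p)$.

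The point to watch is the character. The character $\chi_{(-1)^k4N}(d)=\left(\frac{(-1)^k4N}{d}\right)$ has $p$-part $\left(\frac{d}{p}\right)$, up to the quadratic reciprocity twist, which is absorbed in the $\varepsilon_d^{2k}$ convention. So on $\Gamma_0(4M)$ the putative automorphy factor must be a genuine character. The character of $g$ evaluated on the element $\gamma=\left(\begin{smallmatrix}a&b\\c&d\end{smallmatrix}\right)\in\Gamma_0(4M)\cap\Gamma^0(p)$ is the character of $f$ evaluated on the conjugate $\left(\begin{smallmatrix}a&b/p\\pc&d\end{smallmatrix}\right)$, namely $\chi(d)$. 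Invariance under $T$ forces this character to factor through $\Gamma_0(4M)$, that is, to be consistent on the cosets. I would check this by taking $\gamma\in\Gamma_0(4N)$ and comparing $\gamma$ with $T\gamma' T^{-1}$ for suitable $\gamma'$. Consistency forces $\left(\frac{d}{p}\right)$ to be trivial on all $d$ that occur, and since $d$ ranges over all units mod $p$ this gives a contradiction unless $g=0$.

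An equivalent and cleaner route is to argue directly with $f$. Because $f$ has period $1/p$, the function $f|_k T^{1/p}$ equals $f$. Take $\gamma=\left(\begin{smallmatrix}a&b\\4Nc'&d\end{smallmatrix}\right)\in\Gamma_0(4N)$ and form $\gamma''=\left(\begin{smallmatrix}1&j/p\\0&1\end{smallmatrix}\right)\gamma\left(\begin{smallmatrix}1&-j'/p\\0&1\end{smallmatrix}\right)$. The integers $j,j'$ are chosen so that $\gamma''$ is integral. This requires $aj'\equiv dj\pmod p$, since $4Nc'\equiv0\pmod p$, so take $j'\equiv a^{-1}dj$. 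Then $\gamma''\in\Gamma_0(4N)$ has lower-right entry $d-4Nc'j'/p\equiv d-4Mc'j'\pmod p$. As $j$ varies, this runs over residues whose ratio to $d$ can be made a quadratic nonresidue mod $p$, using $p\nmid 4Mc'$ for a suitable $\gamma$ with $p\nmid c'$. The $p$-part of the character is unchanged on the $M$-part but flips sign, so $f=-f$ and hence $f=0$. Here $p^2\nmid N$ is exactly what makes $4Nc'/p$ a unit mod $p$.

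The main obstacle is the bookkeeping of the theta multiplier $\varepsilon_d^{2k}\left(\frac{c}{d}\right)^{2k}$ in the unified slash operator for integral $k$ with $4\mid N$. I would first reduce it, via the convention in the definition of modularity, to the plain character $\left(\frac{(-1)^k4N}{d}\right)$ times $(c\tau+d)^k$. Then I would verify with quadratic reciprocity that its $p$-component is $\left(\frac{d}{p}\right)$, while the remaining factor depends only on $d$ modulo $4M$. The modification preserves $d$ modulo $4M$, since $p$ and $4M$ are coprime, so only the $p$-part changes. The choice of $j$ is possible because the map $j\mapsto d-4Mc'j\,a^{-1}d/\ldots$ is affine and nonconstant in $j$ mod $p$.
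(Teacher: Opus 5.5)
Your strategy is the right one. It is the classical argument behind Li's lemma, which the paper cites in place of a proof. You conjugate $\gamma=\left(\begin{smallmatrix}a&b\\c&d\end{smallmatrix}\right)\in\Gamma_0(4N)$, with $c=4Nc'$, by translations by multiples of $1/p$, which fix $f$. This gives $\gamma''\in\Gamma_0(4N)$ with the same lower-left entry, with $d''\equiv d\pmod{4M}$, but with $\left(\frac{d''}{p}\right)=-\left(\frac{d}{p}\right)$. (Your first route via $g(\tau)=f(\tau/p)$ only asserts the ``consistency'' step, and in the end it reduces to this same computation.)

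However, the key step of your second route fails as written. The upper-right entry of $\gamma''$ is
\[
b+\frac{jd-aj'}{p}-\frac{jj'c}{p^{2}}=b+\frac{jd-aj'-4Mc'jj'}{p}.
\]
The cross term $jj'c/p^2=jj'\cdot 4Mc'/p$ is not an integer when $p\nmid jj'c'$. The divisibility $p\mid c$ that you invoke is not enough; you would need $p^2\mid c$. That fails exactly when $p\nmid c'$, which is the condition you need in order to move $d$. So integrality actually requires $j'(a+4Mc'j)\equiv dj\pmod p$. With your choice $j'\equiv a^{-1}dj$ and $j\not\equiv 0$, the matrix $\gamma''$ is not integral, and $d''$ is not an affine function of $j$.

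The repair is short: prescribe the target residue first. Given a unit $u\pmod p$, choose $j'$ with $4Mc'j'\equiv d-u\pmod p$ (possible since $p\nmid 4Mc'$), and then take $j\equiv aj'u^{-1}\pmod p$.
\begin{itemize}
\item The integrality condition, written as $j(d-4Mc'j')\equiv aj'\pmod p$, then holds.
\item The new entry $d''=d-4Mc'j'$ satisfies $d''\equiv u\pmod p$ and $d''\equiv d\pmod{4M}$.
\item Equivalently, solving for $j'$ gives $d''\equiv (a+4Mc'j)^{-1}\pmod p$, using $ad\equiv 1\pmod p$. This is a fractional-linear function of $j$, and it still takes every unit value.
\end{itemize}
Now take, say, $\gamma=\left(\begin{smallmatrix}1&0\\4N&1\end{smallmatrix}\right)$ and $u$ a quadratic nonresidue. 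Periodicity gives $f(\gamma''\tau)=f(\gamma(\tau-j'/p))=\chi(d)(c\tau+d'')^kf(\tau)$. On the other hand, $\gamma''\in\Gamma_0(4N)$ gives the same identity with $\chi(d'')$ in place of $\chi(d)$. Since $\chi_{(-1)^k4N}=\left(\frac{p^*}{\cdot}\right)\left(\frac{\pm 4M}{\cdot}\right)$ and $\left(\frac{p^*}{\cdot}\right)=\left(\frac{\cdot}{p}\right)$, you get $\chi(d'')=-\chi(d)$, hence $f=0$.

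Two smaller points.
\begin{itemize}
\item The reciprocity sign lives in the factor modulo $4M$. It is not ``absorbed'' by $\varepsilon_d^{2k}$; in the paper's convention for integral $k$, the factors $\varepsilon_d^{\pm 2k}$ simply cancel and $\left(\frac{c}{d}\right)^{2k}=1$.
\item Your argument needs $p$ odd, while the statement also allows $p=2$ with $2\| N$. In that case the same conjugation with $j=j'=1$ and $c'$ odd is integral. It gives $d''\equiv d+4\pmod 8$ and $d''\equiv d\pmod{4M}$, which flips the $8$-part of the character, so again $f=0$.
\end{itemize}
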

Finally, we recall for $M\in\N$ and $m\in\Z$ the {\it sieving operator}
$$
f| S_{M,m}(\tau):=\sum_{n\equiv m\Pmod M}c_{\tau_2}(n)q^n.
$$

 \noindent We also need the following observation about $\mathcal{H}_\frac{5}{2}$.
\begin{Lemma}\label[Lemma]{lem:HSn8-notcusp}
For $m\in\{1,5\}$, the function $\mathcal{H}_{\frac{5}{2}}|S_{8,m}$ is not a cusp form.
\end{Lemma}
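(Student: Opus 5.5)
The plan is to show that $\mathcal{H}_{\frac52}|S_{8,m}$ does not vanish at some cusp of $\Gamma_0(16)$ (it is modular on $\Gamma_0(16)$, since sieving modulo $8$ raises the level from $4$ to at most $16$). Cuspidality means vanishing at every cusp, so non-vanishing at one cusp suffices. The cusp $\infty$ does not help: the constant term of $\mathcal{H}_{\frac52}$ is $H(2,0)=\zeta(-3)=\frac1{120}$, and it sits in the class $n\equiv 0\pmod 8$. Hence $\mathcal{H}_{\frac52}|S_{8,m}$ has zero constant term at $\infty$ for $m\in\{1,5\}$, and we must look at another cusp, most naturally $0$.

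\textbf{Step 1: express the sieve through additive twists.} By orthogonality of characters,
\[
f|S_{8,m}(\tau)=\frac18\sum_{j=0}^{7}e^{-\frac{2\pi i jm}{8}}f\left(\tau+\frac j8\right).
\]
For $f=\mathcal{H}_{\frac52}$, which lies in the plus space, only $n\equiv 0,1\pmod 4$ occur. So the sieve to $n\equiv m\pmod 8$ with $m$ odd can be written as a combination of $f(\tau)$, $f(\tau+\frac12)$, $f(\tau+\frac14)$, $f(\tau+\frac34)$, together with the eighth-shifts needed to separate $1$ from $5$ modulo $8$.

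\textbf{Step 2: compute the behaviour at $0$.} We evaluate each $f(\tau+\frac j8)$ under $\tau\mapsto -1/(16\tau)$, or more concretely compute the constant term of $f|S_{8,m}$ at the cusp $0$. The explicit formula
\[
\mathcal{H}_{\frac52}=\frac1{120}\Theta^5-\frac16\Theta\,\frac{\eta^8(4\tau)}{\eta^4(2\tau)}
\]
makes this tractable. First note that $\frac{\eta^8(4\tau)}{\eta^4(2\tau)}=\sum_{n\ \mathrm{odd},\,n>0}\sigma_1(n)q^{n/2}\big|_{\tau\to 2\tau}$, which is $\theta_2(2\tau)^4/16$ up to normalisation. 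Hence the whole expression is a polynomial in the Jacobi thetas $\theta_2,\theta_3,\theta_4$ evaluated at $2\tau$ or $4\tau$. The shifts $\tau\mapsto\tau+\frac j8$ and the inversion permute these thetas up to explicit eighth roots of unity and powers of $\tau$. A cleaner route is to separate the sieve into theta pieces: write $\Theta=\Theta|S_{4,0}+\Theta|S_{4,1}$, where the two pieces are $\theta_3(16\tau)$-type and $\theta_2(16\tau)$-type functions, and expand $\Theta^5$ and $\Theta\,\theta_2(4\tau)^4$ into products of such pieces. Then $S_{8,m}$ selects the products whose exponents lie in the right class. Each such product has a known value at the cusp $0$ via $\theta_3(-1/\tau)=\sqrt{-i\tau}\,\theta_3(\tau)$ and $\theta_2\leftrightarrow\theta_4$, and this yields the leading constant.

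\textbf{Step 3 (main obstacle): show the resulting constant is nonzero.} The difficulty is the root-of-unity bookkeeping, since cancellations are precisely what could make the constant vanish. As a cross-check, and possibly as a replacement argument, I would compute dimensions. The cusp forms $S_{\frac52}(\Gamma_0(16))$ form a small space; its plus-space part is, I expect, zero. Alternatively, Kohnen's isomorphism identifies $S_{\frac52}^+(\Gamma_0(4))$ with $S_4(\mathrm{SL}_2(\Z))=\{0\}$, while the sieved forms live on level $16$, which makes the dimension route less direct. If $S_{\frac52}(\Gamma_0(16))$ restricted to forms supported on $n\equiv m\pmod 8$ can be shown to be zero by Sturm's bound, then it suffices to exhibit one nonzero coefficient, for instance $H(2,1)$ or $H(2,5)$ in the respective classes. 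We have $H(2,1)=L(-1,\chi_1)=\zeta(-1)=-\frac1{12}\neq0$, and $H(2,5)=L(-1,\chi_5)=-\frac25\neq 0$. The Sturm-bound approach is what I would try first. Write $g:=\mathcal{H}_{\frac52}|S_{8,m}$. If $g$ were cuspidal, then $g\in S_{\frac52}(\Gamma_0(16))$. One checks by computing a basis, via eta quotients and products of thetas, that no nonzero cusp form there has coefficients supported in $n\equiv m\pmod 8$ matching the initial coefficients of $g$ up to the Sturm bound $\frac{5}{24}\cdot 24=5$. Comparing the coefficients of $q^1$ and $q^5$ then gives the contradiction.
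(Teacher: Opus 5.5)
\textbf{Gap in the proposal as written.} Your Step 2 stops exactly where the content of the lemma lies. You never show that the constant term at the cusp $0$ is nonzero (you flag this yourself as the main obstacle), so that route proves nothing as it stands. The argument that can carry the proof is Step 3, which is a genuinely different route from the paper's. However, you leave its decisive input as ``I expect'' and ``one checks by computing a basis''.

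\textbf{The missing fact.} What you need is simply $S_{\frac52}(\Gamma_0(16))=\{0\}$. The group $\Gamma_0(16)$ has no elliptic points and $X_0(16)$ has genus $0$. Its six cusps $\infty,0,\frac12,\frac14,\frac34,\frac18$ are all regular for weight $\frac52$: the cusp $\frac12$, which is irregular for $\Gamma_0(4)$ with orders in $\frac14+\Z$, has width $4$ on $\Gamma_0(16)$, so its orders become integral. Hence a nonzero form in $M_{\frac52}(\Gamma_0(16))$ has exactly $\frac{5/2}{12}[\SL_2(\Z):\Gamma_0(16)]=5$ zeros, while a cusp form would need at least $6$. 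This is consistent with $\dim M_{\frac52}(\Gamma_0(16))=6$, quoted in the proof of Theorem \ref{thm:PrincipalGenus-weak}. With this, no Sturm comparison or plus-space consideration is needed. Indeed, $\mathcal H_{\frac52}|S_{8,m}$ is nonzero because $H(2,1)=-\frac1{12}$ and $H(2,5)=-\frac25$, and $M_{\frac52}(\Gamma_0(16))$ contains no nonzero cusp forms.

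\textbf{Correction on the level.} Your justification that ``sieving modulo $8$ raises the level from $4$ to at most $16$'' is wrong in general: sieving modulo $8$ only gives $\Gamma_0(64)$. There the cusp space is nonzero, and your argument would turn into a genuine computation. Level $16$ holds here by Lemma \ref{lemma: level 16}, which uses that $k=2$ is even and $m\in\{1,5\}$.

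\textbf{On Step 2.} For the record, the cusp-$0$ route would succeed. For instance, $\mathcal H_{\frac52}|S_{8,5}=-\frac1{80}\bigl(\Theta(\tau)-\Theta(4\tau)\bigr)^5$, which is visibly nonzero at $0$. But establishing such an identity needs the same level-$16$ input.

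\textbf{Comparison with the paper.} The paper argues by coefficient growth. It uses $|H(2,mp^2)|=|L(-1,\chi_m)|\,(p^3+1-\chi_m(p)p)\gg (p^2)^{\frac32}$, whereas cusp forms of weight $\frac52$ have coefficients $\ll n^{\frac54}$. That argument is independent of the level and needs no information about cusps or dimensions, only Cohen's closed formula and the Hecke bound. Your route, once completed, is more elementary and purely finite. It even shows that no nonzero form in $M_{\frac52}(\Gamma_0(16))$ is cuspidal. The price is that it depends on knowing the exact level $16$ (Lemma \ref{lemma: level 16}) and the regularity of every cusp.
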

\begin{proof}
For an odd prime $p$ and $m\in \{1,5\}$,
\begin{equation}\label{expformular}
	H\left(2,mp^2\right)=L\left(-1,\chi_{m}\right)\left(p^3+1-\chi_m(p)p\right)
\end{equation}
is the $mp^2$-th Fourier coefficient of $\mathcal{H}_{\frac{5}{2}}|S_{8,m}$. Using \eqref{expformular}, we obtain the estimate
\[
\left|H\left(2,mp^2\right)\right| \gg_m p^3=\left(p^2\right)^{\frac{3}{2}}. 
\]
Now, if $\mathcal{H}_{\frac{5}{2}}|S_{8,m}$ were a cusp form, then the standard bound (see \cite[(1.13)]{Shimura-correspondence}) would imply that
\[
\left|H\left(2,mp^2\right)\right| \ll_m \left(p^2\right)^{\frac{5}{4}}.
\]
This is a contradiction. Thus $\mathcal{H}_{\frac{5}{2}}|S_{8,m}$ is not a cusp form and the lemma holds.
\end{proof}

\subsection{Hurwitz class numbers and their generalizations}\label{sec:Hurwitz}

We next recall Hurwitz class numbers and their generalizations. 
For a discriminant $d<0$, we let $h(d)$ denote the class number of primitive integral binary quadratic forms of discriminant $d$. Weighting the quadratic forms by the reciprocal of the size of their automorphism groups in $\operatorname{PSL}_2(\Z)$ yields the modified class numbers 
\begin{equation}\label{eqn:h*def}
   h^*(d):=\frac{h(d)}{w_d}.
\end{equation}
 These satisfy the well-known relation (see \cite[Corollary 7.28]{Cox})
  \begin{equation}\label{eq: h(p^2d)}
  h^*\left(p^2d\right)=\left(p-\JS dp\right)h^*(d).
  \end{equation}
For $n\in\N$, the \begin{it}Hurwitz class number\end{it} $H(n)$  counts the class number of (not necessarily primitive) $Q\in\mathcal{Q}_{-n}$ with the weighting from \eqref{eqn:h*def}. We set $H(0):=-\frac{1}{12}$ and $H(n):=0$ if $n\in\N_0$ with $n\not\equiv 0,3\Pmod{4}$. Write $n\equiv 0,3\pmod{4}$ as $-n=\sqpt^2\Delta$ with $\Delta<0$ a fundamental discriminant. Then the Hurwitz class numbers are related to ordinary class numbers by  
\begin{equation*}
H(n)=\displaystyle\sum_{r|\sqpt}\frac{h\left(r^2\Delta\right)}{w_{r^2\Delta}}=\displaystyle\sum_{r|\sqpt}h^*\left(r^2\Delta\right).
\end{equation*}
We now introduce the class numbers appearing in Theorem \ref{thm:D>1}. Let $D$ and $N$ be squarefree and coprime. For $d=\Delta r^2\in-\N$, where $\Delta$ is a fundamental discriminant, we define\footnote{The numbers $h_{D,N}(d)$ admit an interpretation in terms of optimal embeddings into Eichler orders (see \cite{Voight}). }
\begin{equation} \label{eq: hDN}
h_{D,N}(d):=h\left(r^2\Delta\right)\prod_{p|DN}m_{D,N,p}\left(r^2\Delta\right),
\end{equation}
where the local factors $m_{D,N,p}$ are given by
\[
m_{D,N,p}\left(r^2\Delta\right):=\begin{cases}
  \displaystyle
  1-\JS{\Delta}p &\text{if }p|D\text{ and }p\nmid r, \\
  0 &\text{if }p|D\text{ and }p|r, \\
  \displaystyle 1+\JS{\Delta}p &\text{if }p|N\text{ and }p\nmid r, \\
  2 &\text{if }p|N\text{ and }p|r. \end{cases}
\]
Using the same weighting as in \eqref{eqn:h*def}, we define
\[
h_{D,N}^*(d)=\frac{h_{D,N}(d)}{w_d}.
\]

\subsection{Genus theory for binary quadratic forms}\label{sec:genus}

We now review the genus theory of binary quadratic forms, see \cite[Section 2C]{Cox} for details. Let $-\delta\in-\N$ be a fundamental discriminant. For a fundamental discriminant $d\mid\delta$, we define the \begin{it}genus character\end{it} $\chi_d(Q):=(\frac{d}{a})$, with $a$ any integer coprime to $\delta$ represented by $Q$.\footnote{A primitive quadratic form $Q\in \mathcal{Q}_{-\delta}$ primitively represents an integer coprime to $\delta$, and any $Q\in\mathcal{Q}_{-\delta}$ must be primitive because $-\delta$ is fundamental.}  Binary quadratic forms $Q_1$ and $Q_2$ of discriminant $-\delta$ are \textit{in the same} \begin{it}genus\end{it} if $\chi_d(Q_1)=\chi_d(Q_2)$ for all fundamental discriminants $d\mid \delta$. This induces an equivalence relation and hence $\mathcal{Q}_{-\delta}$ is partitioned into genera. Since  $\chi_d(Q_1)=\chi_d(Q_2)$ for $Q_1$, $Q_2$ in the same genus $\fg$,  $\chi_d(\fg):=\chi_d(Q)$ for any $Q\in\fg$ is a well-defined genus invariant. To describe the genera more explicitly, we recall a standard generating set for the genus characters. Uniquely writing
\[
-\delta = \varepsilon_{\delta,2} 2^{\alpha} \prod_{\substack{p\mid \delta\\p\ne2}} {p^*}
\]
for some $\varepsilon_{\delta,2}\in\{\pm 1\}$ and $p^*:=(\frac{-1}{p})p$, we see that the characters $\chi_{p^*}$ and $\chi_{\varepsilon_{\delta,2} 2^{\alpha}}$ generate the group of genus characters of the class group of binary quadratic forms of discriminant $-\delta$. For $Q\in \mathcal{Q}_{-\delta}$, we have 
\begin{equation} \label{eq: character relation}
\chi_{\varepsilon_{\delta,2} 2^{\alpha}}(Q)\prod_{\substack{p\mid \delta\\p\ne2}} \chi_{p^*}(Q)=1.
\end{equation}
 For  $\delta=DN$ with $\gcd(D,N)=1$, we let $\fg_{-\delta,D,N}$ be the genus of binary quadratic forms of discriminant $-4\delta$ determined by 
  \begin{equation}\label{eqn:fgdef}
  \chi_{p^*}\left(\fg_{-\delta,D,N}\right)=\begin{cases}
    1 &\text{if }p|N, \\
    -1 &\text{if }p|D. \end{cases}
  \end{equation}
The \begin{it}principal genus\end{it} is the genus $\fg=\fg_{-\delta,1,\delta}$ satisfying $\chi_d(\fg)=1$ for all fundamental discriminants $d\mid \delta$. We repeatedly use the following criterion, which follows from the fact that we can choose $\bm{m}\in\Z^2$ so that $Q(\bm{n}+p\bm{m})$ is coprime to $\delta$.
\begin{Lemma}\label[Lemma]{lem:genuscharnotrelprime}
Suppose that $Q\in\mathcal{Q}_{-\delta}$ with $-\delta$ a fundamental discriminant and $p\mid \delta$ is odd. If $p\nmid n$ and $Q(\bm{n})=n$ for some $\bm{n}\in\Z^2$, then 
\[
\chi_{p^*}(Q)=\left(\frac{n}{p}\right).
\]
\end{Lemma}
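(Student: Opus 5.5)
The plan is to first reduce to the case where $n$ is itself coprime to $\delta$, and then read off the genus character from the definition. By the definition of $\chi_{p^*}$, we have $\chi_{p^*}(Q)=\left(\frac{p^*}{a}\right)$ for any integer $a$ coprime to $\delta$ represented by $Q$. So I will construct from $\bm{n}$ a vector $\bm{n}'=\bm{n}+p\bm{m}$ with $a:=Q(\bm{n}')$ coprime to $\delta$ (and positive, which is automatic for nonzero values since $Q$ is positive-definite). This is possible by the Chinese remainder theorem. At $p$ itself no change is needed, since $Q(\bm{n}+p\bm{m})\equiv Q(\bm{n})=n\not\equiv 0\pmod p$ for every $\bm{m}$, as $Q$ has integer coefficients. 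For the other primes $\ell\mid\delta$ (including $\ell=2$ if $2\mid\delta$), I need $\bm{m}$ modulo $\ell$ such that $Q(\bm{n}+p\bm{m})\not\equiv0\pmod{\ell}$. Because $p$ is invertible mod $\ell$, the vector $\bm{n}+p\bm{m}$ runs through all residues mod $\ell$. Since $Q$ is primitive (as $-\delta$ is fundamental), $Q$ is not identically zero mod $\ell$, so some residue class works. Combining these choices via CRT gives the desired $\bm{m}$, and then $a\equiv n\pmod p$ with $\gcd(a,\delta)=1$.

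Next I compute $\left(\frac{p^*}{a}\right)$ by quadratic reciprocity for the Kronecker symbol with $p^*\equiv1\pmod 4$: for positive $a$ coprime to $p$ one has $\left(\frac{p^*}{a}\right)=\left(\frac{a}{p}\right)$. This is standard: write $a=2^e a'$ with $a'$ odd. The odd part gives $\left(\frac{a'}{p}\right)$ via Jacobi reciprocity. The factor $\left(\frac{p^*}{2}\right)$ equals $\left(\frac{2}{p}\right)$, since both equal $(-1)^{(p^2-1)/8}$ (using $p^*\equiv 1\pmod 4$, so that $\left(\frac{p^*}{2}\right)$ depends on $p^*\pmod 8$). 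Hence
\[
\chi_{p^*}(Q)=\left(\frac{p^*}{a}\right)=\left(\frac{a}{p}\right)=\left(\frac{n}{p}\right),
\]
because $a\equiv n\pmod p$.

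No step is a real obstacle. The one point to handle carefully is the prime $2$ when $2\mid\delta$: one must check that $Q$ takes an odd value on some residue class mod $2$, which again follows from primitivity. One should also confirm that the genus character's value does not depend on the choice of $a$, which is built into its definition.
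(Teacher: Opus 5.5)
Your proof is correct and follows the same route as the paper. The paper only remarks that one can choose $\bm{m}$ with $Q(\bm{n}+p\bm{m})$ coprime to $\delta$ and then reads off $\chi_{p^*}(Q)$. You supply the details it leaves implicit: the CRT/primitivity construction of $\bm{m}$, and the reciprocity identity $\left(\frac{p^*}{a}\right)=\left(\frac{a}{p}\right)$ for positive $a$ coprime to $p$.
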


\subsection{The $U_d$- and $W_p$-operators}\label{sec:UpWp}

We now introduce the main operator used throughout. As elements of the group algebra,  we have (see \cite[p. 40]{Kohnen-newform} for the $U$-operator) 
\begin{align*} 
  U_d:f\mapsto
  d^{\frac{\kappa}{2}-1}\sum_{\ell=0}^{d-1}f\Big|\left(
  \M 1\ell0d,d^{\frac{\kappa}{2}}\right)\quad\text{and}\quad
  &V_d:f\mapsto
  d^{-\frac{\kappa}{2}}f\Big|\left(
  \M d001,d^{-\frac{\kappa}{2}}\right).
\end{align*}
Following Kohnen \cite{Kohnen-newform}, we define the operator $W_{\kappa,p}$ which plays a central role below.
\begin{Definition*} 

	Let $\kappa\in\frac12\Z$. For an odd prime divisor $p$ of $N$ such that $p^2\nmid N$, we let\footnote{We omit the weight $\kappa$ from notation if it is clear from the context what the weight is.}
	\begin{equation*} 
		W_{\kappa,p}:=\left(\M pr{4N}{ps},\phi_{\kappa, p}\right)\in \mathfrak{G}_{\kappa}.
	\end{equation*}
Here, for $r,s\in\Z$ satisfying $p^2s-4Nr=p$, we define
\[
\phi_{\kappa, p}(\tau):=\varepsilon_p^{-2\kappa}p^{-\frac{\kappa}{2}}(4N\tau+ps)^{\kappa}.
\]
\end{Definition*}
One checks that for $\kappa\in \Z+\frac{1}{2}$, $W_{\kappa,p}$ acts independent of $r$ and $s$ for $f\in\mM_{\kappa}(\Gamma_0(4N),\chi)$. \hspace{-.1cm}The actions of $W_p^2$ and $W_p U_p$ were considered in \cite[pp. 39 and 42]{Kohnen-newform}.
\begin{Lemma} \label[Lemma]{lemma: Atkin-Lehner}
  Let $p|N$ but $p^2\nmid N$ and $\chi$ be a Dirichlet
  character (mod $4N$). Decompose  
  $\chi=\chi_1\chi_2$ with Dirichlet characters $\chi_{j} $ of modulus $\frac{4N}{p}$
  and $p$, respectively. Let $f\in\mM_{\kappa}(\Gamma_0(4N),\chi)$.
  \begin{enumerate}[label=\rm(\arabic*)]
    \item We have
    \begin{equation*} 
      f|W_{p}^2=\varepsilon_p^{2k+1}\JS{\frac{N}{p}}p
      \overline{\chi_1(p)}\chi_2(-1)f.
    \end{equation*}
    \item Let $d|N$ be coprime to $p$. Then we have
      \begin{equation*} 
        f|W_p U_d=\overline{\chi_2(d)}f|U_d W_p.
      \end{equation*}
  \end{enumerate}
\end{Lemma}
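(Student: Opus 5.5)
We argue directly in the group algebra of $\mathfrak G_\kappa$, treating each part as an identity between products of pairs $(\gamma,\phi)$. We compute the resulting matrix, reduce it modulo $\Gamma_0(4N)$, and then track the automorphy factors. Throughout, write $W_p=\left(\M pr{4N}{ps},\phi_{\kappa,p}\right)$ with $p^2s-4Nr=p$, so that $ps-\frac{4N}{p}r=1$.

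For (1), the matrix part of $W_p^2$ is
\[
\M pr{4N}{ps}^2=\M{p^2+4Nr}{pr+psr}{4Np+4Nps}{4Nr+p^2s^2}=p\,\gamma,\qquad \gamma:=\M{p+\frac{4N}{p}r}{r(1+s)}{4N(1+s)}{\frac{4N}{p}r+ps^2}.
\]
One checks that $\gamma\in\Gamma_0(4N)$ with $\det\gamma=1$. Its lower-right entry is $d_\gamma=\frac{4N}{p}r+ps^2$. Modulo $p$ this is $\frac{4N}{p}r\equiv -1$, and modulo $\frac{4N}{p}$ it is $ps^2\equiv s\equiv p^{-1}$, using $ps\equiv1\pmod{\frac{4N}p}$. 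The scalar matrix $pI$ acts trivially on $\tau$, and its automorphy factor is a constant of absolute value $1$. Hence $W_p^2=(\gamma,c\,\phi_\gamma)$ for a constant $c$, and so
\[
f|W_p^2=c^{-1}\chi(d_\gamma)f=c^{-1}\overline{\chi_1(p)}\chi_2(-1)f.
\]
(In the integral-weight case with $4\mid N$, the factor $\varepsilon_d^{2\kappa}$ from \eqref{eqn:slashconvertnew} is also absorbed into the constant.) To pin down $c$, compare $\phi_{\kappa,p}(W_p\tau)\phi_{\kappa,p}(\tau)$ with $\phi_\gamma(\tau)$. The powers of $(c\tau+d)$ match, since $(4N W_p\tau+ps)(4N\tau+ps)=p(4N(1+s)\tau+d_\gamma)$. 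The remaining constant is $\varepsilon_p^{-4\kappa}$ divided by $\left(\frac{4N(1+s)}{d_\gamma}\right)^{2\kappa}\varepsilon_{d_\gamma}^{-2\kappa}$. Quadratic reciprocity together with the congruences for $d_\gamma$ turns this quotient into $\varepsilon_p^{2\kappa+1}\left(\frac{N/p}{p}\right)$ (here $k$ in the statement is $\kappa$ up to the integral/half-integral convention). The Jacobi symbol evaluation is the fiddly part. The cleanest route is to exploit the independence of $r$ and $s$ and pick them conveniently, for instance $s\equiv -1\pmod{4N/p}$ where that is allowed, or otherwise $s$ with $d_\gamma$ a prime.

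For (2), compare the matrices $\M pr{4N}{ps}\M1\ell0d$ and $\M1{\ell'}0d\M pr{4N}{ps}$ as $\ell$ runs modulo $d$. Since $\gcd(d,p)=1$ and $d\mid N$, for each $\ell$ we look for $\ell'$ and $\gamma_\ell\in\Gamma_0(4N)$ with
\[
W_p\M1\ell0d=\gamma_\ell \M1{\ell'}0d W'_p,
\]
where $W'_p$ is another admissible choice of $(r,s)$; such a $W'_p$ acts identically, by the independence remark. Solving for $\gamma_\ell$ gives an integral matrix in $\Gamma_0(4N)$ whose lower-right entry is congruent to $d$ modulo $p$ and to $1$ modulo $\frac{4N}{p}$. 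The map $\ell\mapsto\ell'$ is a bijection modulo $d$, because it is an affine map with unit coefficient $p\cdot(\cdot)$ modulo $d$. Summing over $\ell$, the character contributes $\chi(d_{\gamma_\ell})=\chi_2(d)$, which enters with the inverse and gives $\overline{\chi_2(d)}$ after matching the sides. The normalizing powers $d^{\frac\kappa2-1}$ and the factors $d^{\frac\kappa2}$ agree on both sides. The automorphy constants again cancel except for symbols of the form $\left(\frac{\cdot}{d_\gamma}\right)$, which reciprocity reduces to $1$ by the same congruences.

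The main obstacle is the bookkeeping of the half-integral-weight theta multipliers $\left(\frac cd\right)\varepsilon_d^{-1}$ in both parts, above all the reciprocity computation that yields $\varepsilon_p^{2\kappa+1}\left(\frac{N/p}{p}\right)$ in (1). I would handle it by choosing $r$ and $s$ to simplify the symbols and then verifying the answer against Kohnen's computation in \cite[pp. 39 and 42]{Kohnen-newform}.
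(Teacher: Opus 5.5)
Your strategy is the standard direct verification: write $W_p^2=p\gamma$ with $\gamma\in\Gamma_0(4N)$, resp.\ $W_p\SM{1}{\ell}{0}{d}=\gamma_\ell\SM{1}{\ell'}{0}{d}W_p$ with $\ell'\equiv p\ell-r\pmod d$, then read off $\chi(d_\gamma)$ and a multiplier. The paper gives no argument of its own and simply cites Kohnen, pp.~39 and~42. Your matrix identities are correct, and so is $\chi(d_\gamma)=\overline{\chi_1(p)}\chi_2(-1)$ in (1). However, the content of the lemma is the exact value of the leftover multiplier. You never compute it, and some of what you assert about it is wrong, so there is a genuine gap.

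\textbf{Part (1).} Since $\kappa=k+\frac12$, the constant to reach is $\varepsilon_p^{2\kappa}\left(\frac{N/p}{p}\right)=\varepsilon_p^{2k+1}\left(\frac{N/p}{p}\right)$.
\begin{itemize}
\item Your target $\varepsilon_p^{2\kappa+1}\left(\frac{N/p}{p}\right)$ is off by a factor $\varepsilon_p$. For $N=p=3$ and $\kappa=\frac12$, $W_3^2$ acts by $i$, not by $-1$.
\item Your expression for the constant drops a branch sign. For $\kappa\in\Z+\frac12$ one has $(4NW\tau+ps)^\kappa(4N\tau+ps)^\kappa=\pm p^\kappa(4N(1+s)\tau+d_\gamma)^\kappa$, with the minus sign when $1+s<0$.
\item Neither shortcut you propose is available. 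The choice $s\equiv-1\pmod{\frac{4N}{p}}$ contradicts $ps\equiv1\pmod{\frac{4N}{p}}$ unless $\frac{4N}{p}\mid p+1$. And $d_\gamma=ps(s+1)-1$ is quadratic in $s$, so you cannot arrange for it to be prime.
\end{itemize}
What does work is to take $s\geq1$, replacing $(r,s)$ by $(r+pt,s+\frac{4N}{p}t)$ if necessary; then there is no branch sign. Next, $s\equiv p\pmod 4$ gives $d_\gamma\equiv p\pmod 4$, hence $\varepsilon_{d_\gamma}=\varepsilon_p$. Finally, Jacobi reciprocity with $d_\gamma\equiv-1\pmod p$, $d_\gamma\equiv p^{-1}\pmod{\frac{4N}{p}}$ and $d_\gamma\equiv-1\pmod{s+1}$ gives $\left(\frac{4N(1+s)}{d_\gamma}\right)=\left(\frac{N/p}{p}\right)$; the $2$-part of $s+1$ needs a separate check. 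The multiplier is then $\varepsilon_p^{4\kappa}\varepsilon_p^{-2\kappa}\left(\frac{N/p}{p}\right)$, as required.

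\textbf{Part (2).} The lower-right entry of $\gamma_\ell$ is $\equiv d^{-1}$ modulo $p$, not $d$; it is the upper-left entry $a_{\gamma_\ell}\equiv-\frac{4N}{p}rd$ that is $\equiv d$. So $\chi(d_{\gamma_\ell})=\overline{\chi_2(d)}$ comes out directly. Your step ``enters with the inverse'' only compensates for that slip, which matters when $\chi_2$ is not real.

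The more serious problem is the Jacobi symbol. Here $\varepsilon_{d_{\gamma_\ell}}=1$ and the branches match exactly, so everything rests on $\left(\frac{c_{\gamma_\ell}}{d_{\gamma_\ell}}\right)$. This does not reduce to $1$ ``by the same congruences''.
\begin{itemize}
\item Write $c_{\gamma_\ell}=4Nu$. The congruences $d_{\gamma_\ell}\equiv d^{-1}\pmod p$ and $d_{\gamma_\ell}\equiv1\pmod{\frac{4N}{p}}$ only give $\left(\frac{4N}{d_{\gamma_\ell}}\right)=\left(\frac{d}{p}\right)$, which need not be $1$.
\item That factor must be cancelled by $\left(\frac{u}{d_{\gamma_\ell}}\right)$, which requires knowing $d_{\gamma_\ell}$ modulo $u$.
\item This information comes from the identity $a_{\gamma_\ell}=pu+d$, i.e.\ $d_{\gamma_\ell}\equiv(pu+d)^{-1}\pmod{4Nu}$. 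Combine it with $u\equiv p^{-1}\pmod d$ and reciprocity between $p$ and $d$.
\end{itemize}
For instance, $N=15$, $p=3$, $d=5$, $\ell=0$ gives $\gamma_0=\SM{-79}{64}{-1680}{1361}$, where $\left(\frac{60}{1361}\right)=\left(\frac{-28}{1361}\right)=-1$. Without this computation you do not even know that the multiplier is independent of $\ell$, so the two sums over $\ell$ cannot be matched. Deferring at the end to Kohnen's computation is exactly what the paper does, so as a proof your proposal stands or falls with these two multiplier evaluations.
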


For modular forms of integral weight, analogous formulas hold. Since the proofs are similar to those in \cite[pp. 39]{Kohnen-newform} and [42], we omit them.
\begin{Lemma} \label[Lemma]{lemma:W^2 2}
  Let $p|N$ but $p^2\nmid N$ and $\chi$ be a Dirichlet character. Decompose $\chi=\chi_1\chi_2$ with $ \chi_1, \chi_2$ Dirichlet characters of modulus $\frac{4N}{p}$ and $p$, respectively. Let $f\in\mM_k(\Gamma_0(4N),\chi)$.
  \begin{enumerate}[label=\rm(\arabic*)]
  \item We have
    \begin{equation*}
    f|W_{p}^2=\overline{\chi_1(p)}\chi_2(-1)f.
    \end{equation*}
  \item If $d$ is coprime to $p$, then
    \begin{equation*}
    f|W_p U_d=\overline{\chi_2(d)}f|U_d W_p.
    \end{equation*}
  \end{enumerate}
\end{Lemma}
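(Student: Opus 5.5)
The plan is to imitate Kohnen's half-integral weight computation, which becomes simpler in integral weight because no theta multiplier appears. In integral weight we have $\JS{c}{d}^{2k}=1$, and $\varepsilon_d^{\pm2k}$ cancels between \eqref{eqn:automorphy} and \eqref{eqn:slashconvertnew}. So the transformation law reduces to $f(\gamma\tau)=\chi(d)(c\tau+d)^kf(\tau)$ for $\gamma\in\Gamma_0(4N)$. Likewise $\varepsilon_p^{-2k}=\pm1$, so in $W_p^2$ this factor squares to $1$.

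\textbf{Step 1 (proof of (1)).} Compute the matrix square directly:
\[
\M pr{4N}{ps}^2=\M{p^2+4Nr}{pr(1+s)}{4Np(1+s)}{4Nr+p^2s^2}.
\]
Substituting $4Nr=p^2s-p$ shows that every entry is divisible by $p$. Hence $W_p^2=(p I_2\cdot\gamma,\phi)$ with
\[
\gamma=\M{p+ps-1}{r(1+s)}{4N(1+s)}{ps(1+s)-1}\in\Gamma_0(4N),
\]
which has determinant $1$ because $\det W_p=p$.

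The cocycle of $W_p^2$ is $\phi_{p}(W_p\tau)\phi_p(\tau)$. By the usual cocycle identity for $j(g,\tau)=c\tau+d$, it equals
\[
\varepsilon_p^{-4k}p^{-k}j(W_p^2,\tau)^k=p^{-k}\bigl(p\,j(\gamma,\tau)\bigr)^k=j(\gamma,\tau)^k.
\]
Since the scalar matrix acts trivially on $\H$, we get $f|W_p^2(\tau)=j(\gamma,\tau)^{-k}f(\gamma\tau)=\chi(d_\gamma)f(\tau)$, where $d_\gamma=ps(1+s)-1$. It remains to evaluate the character at $d_\gamma$.
\begin{itemize}
\item Modulo $p$ we have $d_\gamma\equiv-1$, so $\chi_2(d_\gamma)=\chi_2(-1)$.
\item From $p^2s-p=4Nr$ and $p^2\nmid N$ we get $ps\equiv1\pmod{\frac{4N}{p}}$. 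Hence $d_\gamma\equiv 1+s-1=s\equiv p^{-1}\pmod{\frac{4N}{p}}$, so $\chi_1(d_\gamma)=\overline{\chi_1(p)}$.
\end{itemize}
Together these give (1).

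\textbf{Step 2 (proof of (2)).} Write $U_d$ as the sum over $\ell$ of the elements $\left(\SM1\ell0d,d^{k/2}\right)$ and track the matrix products. For each $\ell \pmod d$, one seeks $\ell'$ and $\gamma_\ell\in\Gamma_0(4N)$ with
\[
W_p\M1\ell0d=\gamma_\ell\M1{\ell'}0dW_p',
\]
where $W_p'$ is $W_p$ with different admissible $(r,s)$. Since $W_p$ acts independently of $r,s$ on $\mM_k(\Gamma_0(4N),\chi)$, the choice of $(r,s)$ does not matter. Solving entrywise, one uses $\gcd(d,p)=1$ and $d\mid N$ to choose $s'$ and $\ell'$ so that the quotient is integral with lower-left entry divisible by $4N$.

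The map $\ell\mapsto\ell'$ should be a bijection modulo $d$. The lower-right entry of $\gamma_\ell$ should be congruent to $d^{-1}$ (or $d$) modulo $p$ and to $1$ modulo $\frac{4N}{p}$. This produces the factor $\overline{\chi_2(d)}$, and summing over $\ell$ gives $f|W_pU_d=\overline{\chi_2(d)}f|U_dW_p$. The automorphy factors again multiply to $j(\gamma_\ell,\tau)^k$, because the determinant powers $p^{\mp k/2}$ and $d^{\pm k/2}$ cancel.

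The main obstacle is the explicit choice of $\ell'$, $r'$, $s'$ in Step 2 and the verification of the congruences for $\gamma_\ell$. I would first try $\ell'\equiv p\ell\pmod d$, and adjust $s'$ by the Chinese remainder theorem so that the required integrality holds.
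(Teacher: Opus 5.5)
Your proposal is correct. It is exactly the Kohnen-style coset computation the paper has in mind: the paper omits the proof as ``similar to'' Kohnen's, and you carry out part (1) completely.

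The step you left open in part (2) closes as follows. Keep $(r',s')=(r,s)$ and take $\ell'\equiv p\ell-r\pmod d$. This is a bijection modulo $d$, and it makes $\gamma_\ell$ integral provided $d\mid 4N$. You assume this, and it is also how the paper uses the lemma (cf.\ Lemma~\ref{lemma: Atkin-Lehner}). The lower-right entry of $\gamma_\ell$ is then $\equiv d^{-1}\pmod p$ and $\equiv 1\pmod{\frac{4N}{p}}$, exactly as you predicted.
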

The proof of the following lemma is similar to that of \cite[Proposition 4]{Kohnen-newform}.
\begin{Lemma} \label[Lemma]{lemma: g eigenvalues}
Assume that $N\in\N$ is odd and squarefree. Let $k\in\N$ and $f\in \mM_k(\Gamma_0(4N),\chi_{(-1)^k4N})$.
\begin{enumerate}[label=\rm(\arabic*)]
\item  
For a prime divisor $p$ of $N$, we have
  \begin{equation*}
  f|\left(p^{-\frac{k}{2}+\frac{1}{2}}U_p W_p\right)^2=\JS{-1}pf.
  \end{equation*}
\item  The following are equivalent: 
\begin{itemize}[leftmargin=*]
 \item[\textup{(a)}] The function  $f(\tau)=\sum_{n\ge0} c_{\tau_2}(n)q^n$ is an eigenfunction  of $p^{-\frac{k}{2}+\frac{1}{2}}U_p W_p$ with eigenvalue $\pm \varepsilon_p$.
 \item[\textup{(b)}] We have $c_{\tau_2}(n)=0$ for all $n$ satisfying $(\frac np)=\mp(\frac{(-1)^{k-1}\frac{N}{p}}p)$.
\end{itemize}
\end{enumerate}
\end{Lemma}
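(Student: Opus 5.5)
We follow Kohnen's proof of \cite[Proposition 4]{Kohnen-newform}, with the half-integral weight formulas of Lemma \ref{lemma: Atkin-Lehner} replaced by their integral-weight analogues in Lemma \ref{lemma:W^2 2}. Throughout, $\chi=\chi_{(-1)^k4N}$ and we write $\chi=\chi_1\chi_2$ with $\chi_2=\big(\frac{\cdot}{p}\big)$ the $p$-part and $\chi_1=\chi_{(-1)^k4N/p^*}$ (up to sign conventions) the remaining part of modulus $4N/p$.

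For (1), we first compute $f|U_pW_p$ as a combination of slashes and bring it to the form $f|W_p\cdot(\text{upper triangular matrices})$. Concretely, for $\ell$ with $p\nmid \ell$ we factor
\[
\M 1\ell0p\M pr{4N}{ps}=\gamma_\ell\M p{r'}{4N}{ps'}\M 1{\ell'}0p
\]
with $\gamma_\ell\in\Gamma_0(4N)$. The term $\ell=0$ gives the matrix $\begin{psmallmatrix}p&r\\4Np&p^2s\end{psmallmatrix}$, which we treat separately. This yields the standard identity $(U_pW_p)^2$ expressed through $W_p^2$ and a Gauss sum $\sum_{\ell}\chi_2(\ell)e^{2\pi i\ell n/p}$, which is evaluated by \eqref{eqn:GaussSum}. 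An easier route is to use the relation $W_pU_pW_p = (\text{scalar})\cdot\overline{\chi_2}$-twisted $U_p$, or equivalently to show $f|U_pW_pU_p = c\, f|\widetilde{T}$. However, the cleanest way is Kohnen's: write $f|U_pW_p = p^{\frac k2-1}\sum_\ell f|(\ldots)$, note that $f|U_pW_p$ again lies in $\mM_k(\Gamma_0(4N),\chi)$ because the character is quadratic, and verify the square identity on Fourier expansions. Collecting the automorphy factors $\phi$ and $\chi(d)$ from each $\gamma_\ell$ gives the scalar $\chi_1(p)\chi_2(-1)\varepsilon_p^{2}\cdot p^{k-1}$. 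Simplifying with $\chi_1(p)=\big(\frac{(-1)^kN/p}{p}\big)$ and quadratic reciprocity should produce exactly $\big(\frac{-1}{p}\big)$ after normalizing by $p^{-k+1}$. This bookkeeping of roots of unity is the main obstacle; I would check it against the example $f=\Theta_Q$ with $k=1$ as a sanity test.

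For (2), set $\mathcal U:=p^{-\frac k2+\frac12}U_pW_p$. By (1), $\mathcal U^2=\big(\frac{-1}{p}\big)=\varepsilon_p^2$, so $\mathcal U$ is diagonalizable with eigenvalues $\pm\varepsilon_p$. The projections are $f_\pm=\tfrac12\big(f\pm\varepsilon_p^{-1}f|\mathcal U\big)$. The key computation is the Fourier expansion of $f|\mathcal U$. Using $U_pW_p=W_p'(\ldots)$ and the Gauss sum, I expect a formula of the form
\[
f|U_p W_p = p^{\frac{k}{2}-\frac12}\varepsilon_p\Big(\frac{(-1)^{k-1}N/p}{p}\Big)\sum_n \Big(\frac np\Big)c(n)q^n + p^{\frac k2}\cdot(\text{terms supported on } p\mid n)\ldots
\]
Rather than derive this in full, one can argue as follows. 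The operator $g\mapsto g-\varepsilon\, g|\mathcal U$ annihilates $f$ exactly when $f$ is an eigenvector, and one shows
\[
f\pm\varepsilon_p^{-1}f|\mathcal U = 2\sum_{(\frac np)=\pm(\frac{(-1)^{k-1}N/p}{p})}c(n)q^n+(\text{$p\mid n$ part}).
\]
Then (a)$\Rightarrow$(b) is read off by comparing coefficients with $(\frac np)=\mp\cdots$. For (b)$\Rightarrow$(a), the function $f\mp\varepsilon_p^{-1}f|\mathcal U$ lies in $\mM_k(\Gamma_0(4N),\chi)$ and is supported on exponents divisible by $p$. Lemma \ref{lem:Vpvanish} then forces it to vanish, so $f$ is an eigenfunction with eigenvalue $\pm\varepsilon_p$.
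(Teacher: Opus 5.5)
\textbf{There is a genuine gap.} Your treatment of (2) has the right shape, and using Lemma \ref{lem:Vpvanish} for (b)$\Rightarrow$(a) is the intended tool. But the whole lemma rests on the expansion
\[
f|\mathcal U=\varepsilon_p\JS{(-1)^{k-1}\frac Np}{p}\sum_{p\nmid n}\JS np c_{\tau_2}(n)q^n+(\text{terms } q^n \text{ with } p\mid n).
\]
You only state this as an expectation (``I expect'', ``one shows''). The computations you sketch toward it and toward (1) would not work.

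\textbf{The expansion.} No factorization $\SM1\ell0p\SM pr{4N}{ps}=\gamma_\ell\SM p{r'}{4N}{ps'}\SM1{\ell'}0p$ with $\gamma_\ell=\SM abcd\in\Gamma_0(4N)$ exists. The first column on the right is $\gamma_\ell(p,4N)^{T}$, with lower entry $cp+4Nd$. Setting this equal to $4Np$ with $4N\mid c$ forces $p\mid d$, which contradicts $\gcd(c,d)=1$. Also, $\ell=0$ is not the exceptional term. The correct decomposition is as follows.
\begin{itemize}
\item For every $\ell\not\equiv r\pmod p$, including $\ell=0$, the product equals $\gamma_\ell\SM p{x_\ell}0p$. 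Here $\gamma_\ell\in\Gamma_0(4N)$ has lower-right entry $d_\ell\equiv 1\pmod{\frac{4N}{p}}$ with $\JS{d_\ell}{p}=\JS{rx_\ell}{p}$, and $x_\ell$ runs over $(\Z/p\Z)^\times$.
\item The single index $\ell\equiv r\pmod p$ is where $p^2$ divides the first column, because $4\frac Np r\equiv-1\pmod p$. There the product is an Atkin--Lehner matrix times $\SM py01$. It therefore contributes a multiple of $(f|W_p)(p\tau)$, which is supported on exponents divisible by $p$.
\end{itemize}
The automorphy factors give $\varepsilon_p^{2k}\chi(d_\ell)$, and \eqref{eqn:GaussSum} then yields the constant $\varepsilon_p^{2k+1}\JS rp=\varepsilon_p\JS{(-1)^{k-1}\frac Np}{p}$.

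\textbf{Part (1).} The scalar you predict cannot be right. Since $\chi_2(-1)=\JS{-1}p=\varepsilon_p^2$, your $\chi_1(p)\chi_2(-1)\varepsilon_p^2$ reduces to $\chi_1(p)$. That is a sign depending on $\frac Np$ and $k$: it equals $\JS{(-1)^{k-1}\frac Np}{p}$, and with your value of $\chi_1(p)$ it is $\JS{(-1)^{k}\frac Np}{p}$. In general it is not $\JS{-1}p$, and no reciprocity argument changes this; $W_p^2$ should not enter at all.

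The missing idea is to derive (1) from the expansion itself. Since $f|\mathcal U\in\mM_k(\Gamma_0(4N),\chi_{(-1)^k4N})$, you may apply the expansion twice. The coefficients of $f|\mathcal U^2$ at $p\nmid n$ are then $\varepsilon_p^2\JS np^2c_{\tau_2}(n)=\JS{-1}pc_{\tau_2}(n)$. Hence $f|\mathcal U^2-\JS{-1}pf$ is supported on multiples of $p$ and vanishes by Lemma \ref{lem:Vpvanish}.

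The same coefficient comparison applied to $f|\mathcal U\mp\varepsilon_pf$ gives both directions of (2) directly, with no need for the projections.
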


\section{Modular properties of $\mathcal H_{D,N}$}\label{sec:HDNmodular}

Throughout this section, let $D,N\in\N$ be coprime, odd, and squarefree. We investigate the modular properties of $\mathcal{H}_{D,N}$. For $D=N=1$, Zagier (see \cite{Zagier} or \cite{HirzebruchZagier}) showed that 
\begin{equation}\label{eqn:Hhat}
\widehat{\mathcal H}(\tau):=\mathcal H(\tau)+\frac{1}{8\pi\sqrt{\tau_2}} + \frac1{4\sqrt{\pi}}\sum_{n\ge1} n\Gamma\left(-\frac{1}{2},4\pi n^2\tau_2\right) q^{-n^2}\in \sH_{\frac{3}{2}}\left(\Gamma_0(4)\right).
\end{equation}
 To generalize \eqref{eqn:Hhat}, we require the following lemma.

\begin{Lemma} \label[Lemma]{lemma: mock to modular}
Suppose that
  $$
  f(\tau)=\sum_{n\ge0} c^+(n)q^n+\frac{c^-(0)}{\sqrt{\tau_2}}+2\sum_{n\ge1}nc^-(n)\Gamma\left(-\frac{1}{2},4\pi n^2\tau_2\right)q^{-n^2}\in \sH_{\frac{3}{2}}\left(\Gamma_0(4N)\right).
  $$
  Let $p$ be a prime coprime to $2N$ such that $c^-(pn)=c^-(n)$ for all $n\in\N_0$. Define $f_p$ by
  \begin{multline*}
  f_p(\tau):=\sum_{n\ge0}\hspace{-.06cm}\left(pc^+\left(\frac{n}{p^2}\right)
    +\JS{-n}pc^+(n)\right)q^n
  +\frac{c^-(0)}{\sqrt{\tau_2}}
+ 2\sum_{n\ge1}nc^-(n)\Gamma\hspace{-.06cm}\left(-\frac{1}{2},4\pi n^2\tau_2\right)q^{-n^2},
\end{multline*}
where we set $c^+(x):=0$ if $x\notin\Z$. Then $f_p\in \sH_{\frac{3}{2}}\left(\Gamma_0(4pN)\right)$.  Consequently, $f_p-f\in \sM_{\frac{3}{2}}(\Gamma_0(4pN))$ and $f_p+f\in \sH_{\frac{3}{2}}(\Gamma_0(4pN))$.
\end{Lemma}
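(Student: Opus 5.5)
The plan is to write $f_p$ as an explicit combination of $f$, $f|U_{p^2}$, $f|V_{p^2}$ and $f|T_{p^2}$, and then read off modularity from Lemma \ref{lem:Tp2Up2}. The holomorphic part of $f_p$ is $pc^+(n/p^2)+\JS{-n}{p}c^+(n)$. Comparing with the definition of $T_{p^2}$, these are exactly the last two terms of $f|T_{p^2}$ with the $U_{p^2}$-term omitted. So I will aim to show
\[
f_p = f|T_{p^2} - f|U_{p^2} + \text{(correction in the non-holomorphic part)},
\]
and then check that the correction vanishes, or is itself harmonic on $\Gamma_0(4pN)$, because of the hypothesis $c^-(pn)=c^-(n)$.

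First I compute the non-holomorphic parts. Write $f^-(\tau)=c^-(0)\tau_2^{-1/2}+2\sum_{n\ge1}nc^-(n)\Gamma(-\frac12,4\pi n^2\tau_2)q^{-n^2}$; its only nonzero coefficients sit at $-n^2$ and at $0$.

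Applying the operator $T_{p^2}-U_{p^2}$ to $f^-$ leaves the terms $\JS{n^2}{p}c_{\tau_2}(-n^2)+pc_{p^2\tau_2}(-n^2/p^2)$.
\begin{itemize}
\item The middle term gives back the coefficient at $-n^2$ when $p\nmid n$, and gives $0$ when $p\mid n$.
\item For the last term, with $n=pm$, we have $p\,c_{p^2\tau_2}(-m^2)=p\cdot 2mc^-(m)\Gamma(-\frac12,4\pi m^2p^2\tau_2)$. Using $c^-(m)=c^-(pm)$, this equals $2(pm)c^-(pm)\Gamma(-\frac12,4\pi (pm)^2\tau_2)$, which is precisely the $-(pm)^2$ term of $f^-$.
\item The constant term picks up $\JS{0}{p}=0$ from the middle term and $p\,c^-(0)(p^2\tau_2)^{-1/2}=c^-(0)\tau_2^{-1/2}$ from the last term.
\end{itemize}
Hence $(f|T_{p^2}-f|U_{p^2})^-=f^-$ exactly, with no correction.

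For the holomorphic parts, $(f|T_{p^2}-f|U_{p^2})^+$ has coefficients $\JS{-n}{p}c^+(n)+pc^+(n/p^2)$, which are those of $f_p^+$. One caveat: $c^-$ contributes only at nonpositive indices and $c^+$ only at $n\ge0$, so the index sets do not interact apart from $n=0$, and that index is covered by $\JS{0}{p}=0$.

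Therefore $f_p=f|T_{p^2}-f|U_{p^2}$. By Lemma \ref{lem:Tp2Up2}, $f|T_{p^2}\in\sH_{\frac32}(\Gamma_0(4N))\subseteq\sH_{\frac32}(\Gamma_0(4pN))$ and $f|U_{p^2}\in\sH_{\frac32}(\Gamma_0(4pN))$, so $f_p\in\sH_{\frac32}(\Gamma_0(4pN))$.

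For the consequences:
\begin{itemize}
\item $f_p-f$ has vanishing non-holomorphic part, so it is a holomorphic function transforming with weight $\frac32$. Its growth at the cusps is controlled by the harmonic Maass form growth conditions: at $\infty$ the expansion has only $n\ge0$, and at the other cusps the principal parts of $f|T_{p^2}$, $f|U_{p^2}$ and $f$ come from the behaviour of $\widehat{\mathcal H}$-type forms. This gives $f_p-f\in\sM_{\frac32}(\Gamma_0(4pN))$.
\item $f_p+f$ is a sum of harmonic Maass forms, so it lies in $\sH_{\frac32}(\Gamma_0(4pN))$.
\end{itemize}

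The main obstacle is the cusp condition for $f_p-f$: boundedness at cusps other than $\infty$ is needed. I would handle it by noting that a harmonic Maass form with identically zero non-holomorphic part whose holomorphic expansions at all cusps have no negative powers is a holomorphic modular form. I would then verify the latter from the condition in the definition of $\sH_{\kappa}$ (polynomial principal part plus $c^-(0)\tau_2^{1-\kappa}$), which forces the principal parts at each cusp to cancel.
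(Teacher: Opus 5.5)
Your core argument is the paper's. The paper also shows $f_p=f|T_{p^2}-f|U_{p^2}$ and invokes Lemma \ref{lem:Tp2Up2}. The only difference is cosmetic: the paper rewrites $f|T_{p^2}-f|U_{p^2}$ as $pf(p^2\tau)+\frac{\varepsilon_p^3}{\sqrt p}\sum_{j=1}^{p-1}\JS{-j}{p}f(\tau+\frac jp)$ via Lemma \ref{lem:Up2Tp2rewrite} and evaluates Gauss sums, whereas you compare coefficients directly from the definition of $T_{p^2}$. Your bookkeeping is correct, both at the indices $-n^2$ with $p\mid n$ (recovered from the $V_{p^2}$-piece through $c^-(pm)=c^-(m)$) and at the constant term ($p(p^2\tau_2)^{-1/2}=\tau_2^{-1/2}$).

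The step that does not work as written is your final paragraph on the cusps. The growth condition in the definition of $\sH_\kappa$ \emph{allows} a polynomial principal part at every cusp, so it cannot force anything to cancel. In fact, suppose $f$ had a genuine pole at a cusp other than $\infty$, for example $c^-\equiv 0$ and $f$ weakly holomorphic with a pole at $0$. At the cusp $0$ of $\Gamma_0(4pN)$, $f$ grows exponentially faster than every term $f(p^2\tau)$, $f(\tau+\frac jp)$ of $f_p$; the exponential rates differ by a factor $p^2$, because the dilation and the translates see that pole through a local parameter rescaled by $p^{-1}$ rather than $p$. So $f_p-f$ would have a pole there, and the ``consequently'' clause would fail.

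The missing input is that $f$ has moderate growth at every cusp. This is automatic if the moderate-growth condition of Section \ref{sec:mockHarmonic} is read as part of $\mM_\kappa$, and it holds for $\widehat{\mathcal H}_N$ and $\mathcal H_{D,N}$, the only functions to which the lemma is applied. With it the argument is short. The function $f_p-f=pf(p^2\tau)+\frac{\varepsilon_p^3}{\sqrt p}\sum_{j=1}^{p-1}\JS{-j}{p}f(\tau+\frac jp)-f(\tau)$ is a finite linear combination of functions $f(\gamma\tau)$ with $\gamma\in\GL_2^+(\mathbb Q)$, so it has moderate growth at every cusp. Its non-holomorphic part vanishes identically, so it is holomorphic on $\H$, and hence its expansion at each cusp contains no negative powers. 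This gives $f_p-f\in\sM_{\frac32}(\Gamma_0(4pN))$. The paper's one-line ``all of the claims of the lemma follow'' tacitly relies on exactly this.
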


\begin{proof}
Combining Lemma \ref{lem:Up2Tp2rewrite} and Lemma \ref{lem:Tp2Up2}, we obtain
\begin{equation}\label{eqn:fpharmonic}
	pf\left(p^2\tau\right)+\frac{\varepsilon_p^3}{\sqrt p}\sum_{j=1}^{p-1}\JS{-j}pf\left(\tau+\frac jp\right)=f|T_{p^2}(\tau)- f|U_{p^2}(\tau)\in \sH_{\frac{3}{2}}\left(\Gamma_0(4pN)\right).
\end{equation}
We claim that $f_p$ equals the left-hand side of \eqref{eqn:fpharmonic}, from which all of the claims of the lemma follow. 

Using \eqref{eqn:GaussSum}, one verifies that the holomorphic part of the left-hand side in \eqref{eqn:fpharmonic} is
  \begin{equation*}
  \sum_{n\ge0}\left(pc^+\left(\frac{n}{p^2}\right)+\JS{-n}pc^+(n)\right)q^n,
  \end{equation*}
matching the holomorphic part of $f_p$. It remains to show that $f_p^-$ agrees with the non-holomorphic part of the left-hand side of \eqref{eqn:fpharmonic}. By \eqref{eqn:GaussSum}, we have 
\[
 \frac{\varepsilon_p^3}{\sqrt{p}}\sum_{j=1}^{p-1}
      \JS{-j}p f^{-}\hspace{-.1cm}\left(\tau+\frac jp\right)=2\sum_{\substack{n\ge1}}
      nc^-(n)\Gamma\left(-\frac{1}{2},4\pi n^2\tau_2\right)q^{-n^2}.
\]
Therefore, the non-holomorphic part of the left-hand side of \eqref{eqn:fpharmonic} is
\begin{equation*}
\frac{c^-(0)}{\sqrt{\tau_2}} + 2\sum_{n\ge1}nc^-(n) \Gamma\left(-\frac{1}{2},4\pi n^2\tau_2\right)q^{-n^2}
\end{equation*}
This matches the non-holomorphic part of $f_p$. 
\end{proof}
We next establish a recursion satisfied by $H_{D,N}$.
\begin{Lemma}\label[Lemma]{lem:HDNHecke}
  Let $D$, $N\in N$ be odd, squarefree, coprime, and $p\nmid DN$ be a prime. Then we have
  \begin{equation*}
  pH_{D,N}\left(\frac{n}{p^2}\right)+\JS{-n}pH_{D,N}(n)+H_{D,N}\left(p^2n\right)
  =(p+1)H_{D,N}(n).
  \end{equation*}
\end{Lemma}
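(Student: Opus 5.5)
The plan is to reduce the identity to the classical Hecke relation for Hurwitz class numbers, one prime at a time. The key observation is that, because $p\nmid DN$, the local factors $m_{D,N,q}$ for $q\mid DN$ do not notice the $p$-part of the conductor. So $p$ only enters through the ordinary class numbers $h^*$, via \eqref{eq: h(p^2d)}.

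\textbf{Step 1 (trivial cases).} If $n=0$, then $\JS{-0}{p}=0$ and the left-hand side is $pH_{D,N}(0)+H_{D,N}(0)$, as required. If $-n$ is not a discriminant, then for odd $p$ neither are $-n/p^2$ nor $-p^2n$. Indeed, $p^2\equiv 1\Pmod 4$, and if $p^2\mid n$ with $-n/p^2\equiv 0,1\Pmod 4$ then $-n\equiv 0,1\Pmod 4$. So all terms vanish. For $p=2$ this step needs separate care, since $-4n$ can be a discriminant while $-n$ is not. I would treat that case by hand with the Kronecker symbol $\JS{\cdot}{2}$ and $h^*(4d)=(2-\JS d2)h^*(d)$, possibly reading the statement only for $n\equiv 0,3\Pmod 4$. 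I expect this to be the only delicate point.

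\textbf{Step 2 (factorisation).} Write $-n=\sqpt^2\Delta$ with $\sqpt=p^a\sqpt'$ and $p\nmid \sqpt'$. Divisors of $\sqpt$ are $r=p^jr'$ with $r'\mid\sqpt'$ and $0\le j\le a$. Since $p\nmid DN$, we have $m_{D,N,q}(p^{2j}r'^2\Delta)=m_{D,N,q}(r'^2\Delta)$ for every $q\mid DN$. Iterating \eqref{eq: h(p^2d)} gives
\[
h^*(p^{2j}r'^2\Delta)=\varphi_j\, h^*(r'^2\Delta),\qquad \varphi_0=1,\quad \varphi_j=\left(p-\JS{\Delta}{p}\right)p^{j-1}\ (j\ge1).
\]
Here we use $\JS{r'^2\Delta}{p}=\JS{\Delta}{p}$, and the Legendre symbol is $0$ from the second step on. This holds at the level of $h^*$, so the weights $w$ are taken care of automatically. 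Hence
\[
H_{D,N}(n)=A\,F(a),\qquad A:=\sum_{r'\mid\sqpt'}h^*_{D,N}(r'^2\Delta),\qquad F(a):=\sum_{j=0}^a\varphi_j,
\]
with the convention $F(-1):=0$. The same $A$ works for $n/p^2$ and $p^2n$, which correspond to $a-1$ and $a+1$.

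\textbf{Step 3 (recursion in $a$).} The claim is now $pF(a-1)+\JS{-n}{p}F(a)+F(a+1)=(p+1)F(a)$, and we split into two cases.
\begin{itemize}
\item If $a=0$, then $\JS{-n}{p}=\JS{\Delta}{p}$ and $F(1)=1+p-\JS{\Delta}{p}$. The left-hand side is $\JS{\Delta}{p}+1+p-\JS{\Delta}{p}=p+1$.
\item If $a\ge1$, then $\JS{-n}{p}=0$. Moreover $F(a+1)-F(a)=\varphi_{a+1}=p\varphi_a=p\bigl(F(a)-F(a-1)\bigr)$, which gives the claim immediately.
\end{itemize}
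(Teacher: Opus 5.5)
Your argument is correct for odd $p$ and follows the paper's route. Like the paper, you use that the local factors $m_{D,N,q}$ with $q\mid DN$ do not see the $p$-part of the conductor, so \eqref{eq: h(p^2d)} transfers verbatim to $h^*_{D,N}$; you then carry out the computation (your recursion for $F(a)$) that the paper leaves as a standard calculation.

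Your hesitation about $p=2$ is justified: there the identity genuinely fails for $n\equiv 1,2\Pmod{4}$. For $D=N=1$ and $n=1$, the left-hand side is $H(4)\neq 0$ while the right-hand side is $0$. So the lemma must be read with $p$ odd, which is how it is used throughout (e.g.\ Lemma \ref{lemma: mock to modular} assumes $p\nmid 2N$).
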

\begin{proof}
Using \eqref{eq: h(p^2d)}, a straightforward calculation yields that, for any prime $p\nmid DN$,
\begin{equation} \label{eq: hDN(p^2d)}
  h_{D,N}^*\left(p^2d\right)=\left(p-\JS dp\right)h_{D,N}^*(d).
\end{equation}
Plugging \eqref{eq: hDN(p^2d)} into \eqref{eqn:HDNcoeff}, a standard calculation yields the lemma.
\end{proof}
Lemma \ref{lem:HDNHecke} translates into identities for the generating functions.
\begin{Lemma}\label[Lemma]{lem:HDNtoHDNp}
Let $D$, $N\in\N$ be odd, squarefree, coprime integers and let $p\nmid DN$ be a prime.
\begin{enumerate}
\item We have
\[
 \mathcal{H}_{D,pN}(\tau)= \sum_{n\ge0}\left(pH_{D,N}\left(\frac{n}{p^2}\right)+\left(\JS{-n}p+1\right)H_{D,N}(n)\right)q^n.
\]
\item We have
\[
 \mathcal{H}_{pD,N}(\tau)= -\sum_{n\ge0}\left(pH_{D,N}\left(\frac{n}{p^2}\right)+\left(\JS{-n}p-1\right)H_{D,N}(n)\right) q^n.
\]
\end{enumerate}
\end{Lemma}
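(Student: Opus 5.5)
The plan is to compare Fourier coefficients directly. Both identities follow from a local computation at the single prime $p$, using the definition \eqref{eqn:HDNcoeff} together with the explicit local factors $m_{D,N,p}$ in \eqref{eq: hDN}. Since $p\nmid DN$, for any discriminant $d=r^2\Delta$ we have
\[
h_{D,pN}(d)=h_{D,N}(d)\, m_{D,pN,p}(d),\qquad h_{pD,N}(d)=h_{D,N}(d)\, m_{pD,N,p}(d),
\]
because the factors at primes other than $p$ are unchanged. The weights $w_d$ are the same on both sides, so these relations also hold for the starred versions $h^*$.

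First treat $n=0$. Here $H_{D,pN}(0)=(p+1)H_{D,N}(0)$ and $H_{pD,N}(0)=(p-1)H_{D,N}(0)$. This agrees with the right-hand sides: in (1) the coefficient is $p+0+1=p+1$, since $\left(\frac{0}{p}\right)=0$; in (2) it is $-(p+0-1)=-(p-1)$.

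Now take $-n=\ell^2\Delta$ and write $\ell=p^a\ell'$ with $p\nmid\ell'$. Split the divisor sum $\sum_{r\mid\ell}$ in \eqref{eqn:HDNcoeff} as $r=p^jr'$ with $r'\mid\ell'$ and $0\le j\le a$. For each fixed $r'$, the contribution to $H_{D,pN}(n)$ is
\[
h^*_{D,N}(r'^2\Delta)\Bigl[(1+\chi_\Delta(p))+2\sum_{j=1}^{a}c_j\Bigr],
\]
where \eqref{eq: hDN(p^2d)} gives $h^*_{D,N}(p^{2j}r'^2\Delta)=c_j\,h^*_{D,N}(r'^2\Delta)$ with $c_j=p^{j-1}(p-\chi_\Delta(p))$ for $j\ge1$ and $c_0=1$.

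On the other side, compute $pH_{D,N}(n/p^2)+\left(\left(\frac{-n}{p}\right)+1\right)H_{D,N}(n)$ with the same decomposition. Note that $H_{D,N}(n/p^2)$ collects the terms with $j\le a-1$, and that $\left(\frac{-n}{p}\right)=\chi_\Delta(p)$ if $a=0$ and $0$ if $a\ge1$. The identity then reduces, separately for each $r'$, to an elementary equality in $p$, $a$ and $\chi:=\chi_\Delta(p)$:
\[
(1+\chi)+2\sum_{j=1}^{a}c_j = p\sum_{j=0}^{a-1}c_j+\bigl(1+\chi\,[a=0]\bigr)\sum_{j=0}^{a}c_j .
\]
When $a=0$ both sides equal $1+\chi$. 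For $a\ge1$ the sum telescopes: $\sum_{j=0}^{a}c_j=1+(p-\chi)\frac{p^a-1}{p-1}$, and the equality is checked directly, for instance by induction on $a$ using $c_{j+1}=pc_j$ for $j\ge1$.

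Part (2) follows the same way, using the local factor $1-\chi$ when $j=0$ and $0$ when $j\ge1$. The local identity becomes
\[
1-\chi = -\Bigl(p\sum_{j=0}^{a-1}c_j+\bigl(\chi[a=0]-1\bigr)\sum_{j=0}^{a}c_j\Bigr).
\]
For $a=0$ it is immediate. For $a\ge1$ it says $\sum_{j\le a}c_j - p\sum_{j\le a-1}c_j = 1-\chi$, which follows from the telescoping $c_{j+1}=pc_j$ for $j\ge1$ and $c_1-pc_0=-\chi$.

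Finally, when $n$ is not of the form $-\ell^2\Delta$, both sides vanish, since $n/p^2$ is then not of that form either. The only real obstacle is bookkeeping: keeping the cases $a=0$ and $a\ge1$ apart, and checking that \eqref{eq: hDN(p^2d)} is applied only at the prime $p\nmid DN$.
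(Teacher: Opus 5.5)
Your $n=0$ check for part (2) is wrong, and no argument can fix it. From \eqref{eqn:HDNcoeff} you correctly get $H_{pD,N}(0)=(p-1)H_{D,N}(0)$. You also correctly find that the constant term of the right-hand side of (2) is $-(p-1)H_{D,N}(0)$. But these two values do not agree. Every prime dividing $DN$ is odd, so
\[
H_{D,N}(0)=-\frac{1}{12}\prod_{q\mid D}(q-1)\prod_{q\mid N}(q+1)\neq 0,
\]
and the two sides differ by a sign. With the normalization of $H_{D,N}(0)$ in \eqref{eqn:HDNcoeff}, identity (2) is therefore false for the constant term. You should have flagged this instead of asserting agreement.

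What is true is the following. Identity (2) holds for every coefficient with $n\geq 1$. It also holds at $n=0$ if one normalizes $H_{D,N}(0):=-\frac{1}{12}\prod_{q\mid D}(1-q)\prod_{q\mid N}(q+1)$. This is the constant term the paper's construction actually produces: in Lemma \ref{lemma: mock to modular} the constant term of $f-f_p$ is $(1-p)c^+(0)$. The paper's own proof only treats $n>0$ (it writes $n=\ell^2\delta$), so the defect lies in the statement, not in your method. It is harmless for Theorem \ref{thm:D>1}, because there \eqref{eq: character relation} forces $\omega(D)$ to be even.

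For $n\geq 1$ your argument is correct and is essentially the paper's. Write $\chi=\chi_\Delta(p)$ and $S_a:=\sum_{j=0}^{a}c_j$. Then for $a\geq 1$ both of your local identities reduce to $S_a-pS_{a-1}=1-\chi$, which is exactly the content of \eqref{eqn:HHeckerelrewrite}. The paper organizes the computation differently. It first uses the Hecke relation of Lemma \ref{lem:HDNHecke} to trade $pH_{D,N}(n/p^2)$ for terms in $H_{D,N}(n)$ and $H_{D,N}(p^2n)$. It then evaluates $pH_{D,N}(n)-H_{D,N}(p^2n)$ in closed form as a sum over $r\mid\ell$ with $p\nmid r$. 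You instead expand directly in the power of $p$ dividing $r$, using the explicit multipliers $c_j$ from \eqref{eq: hDN(p^2d)}. This bypasses Lemma \ref{lem:HDNHecke} at the cost of slightly more bookkeeping. Your treatment of the case $a=0$ and of the vanishing for $n\equiv 1,2\Pmod{4}$ is correct.
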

\begin{proof}
(1) Using \Cref{lem:HDNHecke}, we have
\begin{equation}\label{eqn:HDNtoHDNp1}
pH_{D,N}\!\left(\frac{n}{p^2}\right)+\left(\JS{-n}p+1\right)\!H_{D,N}(n) = (p+2)H_{D,N}(n) - H_{D,N}\left(p^2n\right).
  \end{equation}
  Write $n=\sqpt^2\delta$, where $-\delta$ is a fundamental discriminant. Then \eqref{eqn:HDNcoeff} yields
\begin{equation}\label{eqn:HHeckerelrewrite}
pH_{D,N}(n)-H_{D,N}\left(p^2n\right) = -\sum_{\substack{r|\sqpt\\ p\nmid r}}\left(1-\left(\frac{-\delta}{p}\right)\right)   h_{D,N}^*\left(-r^2\delta\right).
\end{equation}
 Using \eqref{eqn:HHeckerelrewrite}, \eqref{eqn:HDNcoeff}, and \eqref{eq: hDN}, the right-hand side of \eqref{eqn:HDNtoHDNp1} becomes
\begin{equation*}
\sum_{\substack{r\mid \sqpt\\ p\nmid r}}\left(1+\left(\frac{-\delta}{p}\right)\right) h_{D,N}^*\left(-r^2\delta\right) + 2\sum_{\substack{r\mid \sqpt\\ p\mid r}} h_{D,N}^*\left(-r^2\delta\right)=H_{D,pN}(n).
\end{equation*}

\noindent(2) From \eqref{eq: hDN}, we obtain
\[
h_{pD,N}^*\left(-r^2\delta\right)= h_{D,N}^*\left(-r^2\delta\right) m_{pD,N,p}\left(-r^2\delta\right).
\]
Plugging this into \eqref{eqn:HHeckerelrewrite}, the claim follows.
\end{proof}

Motivated by \eqref{eqn:Hhat}, we define\footnote{Recall that only the case $D=1$ requires a completion.}
\begin{equation}\label{eqn:HnHat}
      \widehat{\mathcal H}_{N}(\tau):=\mathcal H_{N}(\tau)+\frac{2^{\omega(N)-3}}{\pi\sqrt{\tau_2}} + \frac{2^{\omega(N)-2}}{\sqrt{\pi}}   \sum_{n\ge1}n\Gamma\left(-\frac{1}{2},4\pi n^2\tau_2\right)q^{-n^2}.
\end{equation}
We now deduce the modularity properties of $\mathcal{H}_{D,N}$. 
\begin{Proposition}\label[Proposition]{proposition: HDN modular}
  Let $D$, $N\in\N$ be odd, squarefree, and coprime and let $p\nmid DN$ be a prime.
  \begin{enumerate}[label=\rm(\arabic*)]
    \item We have $\widehat{\mathcal H}_{N}\in \sH_{\frac{3}{2}}\left(\Gamma_0(4N)\right)$. 
    \item If $D>1$, then $\mathcal H_{D,N}\in \sM_{\frac{3}{2}}(\Gamma_0(4DN))$.
  \end{enumerate}
\end{Proposition}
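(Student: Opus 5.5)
We argue by induction on $\omega(DN)$, adding one prime at a time. The base case $D=N=1$ is Zagier's result \eqref{eqn:Hhat}. The inductive tool is Lemma \ref{lemma: mock to modular}, whose output $f_p$ we will match with the generating functions in Lemma \ref{lem:HDNtoHDNp}.

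\textbf{Step 1: part (1).} Assume $\widehat{\mathcal H}_N\in\sH_{\frac32}(\Gamma_0(4N))$ and let $p\nmid 2N$ be prime. We apply Lemma \ref{lemma: mock to modular} to $f=\widehat{\mathcal H}_N$. Comparing \eqref{eqn:HnHat} with the shape required there, we have $c^+(n)=H_N(n)$ and $c^-(n)=2^{\omega(N)-3}/\sqrt\pi$ for all $n\ge1$. Moreover $c^-(0)=2^{\omega(N)-3}/\pi$, and it is untouched by the construction. The hypothesis $c^-(pn)=c^-(n)$ then holds trivially, since $c^-$ is constant on $\N$.

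The lemma gives $f_p\in\sH_{\frac32}(\Gamma_0(4pN))$, and therefore $f_p+f\in\sH_{\frac32}(\Gamma_0(4pN))$. The holomorphic part of $f_p+f$ has coefficients
\[
pH_N\!\left(\tfrac{n}{p^2}\right)+\left(\JS{-n}{p}+1\right)H_N(n).
\]
By Lemma \ref{lem:HDNtoHDNp}(1), these are exactly $H_{pN}(n)$. The non-holomorphic part of $f_p+f$ is twice that of $\widehat{\mathcal H}_N$. Since $2^{\omega(N)+1}=2^{\omega(pN)}$, this is precisely the non-holomorphic part in \eqref{eqn:HnHat} for $pN$. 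Hence $f_p+f=\widehat{\mathcal H}_{pN}$, which completes the induction.

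We still have to check the $n=0$ term. The required value is $H_{pN}(0)=(p+1)H_N(0)$, since $\JS{0}{p}=0$. The lemma's formula gives $pH_N(0)+H_N(0)=(p+1)H_N(0)$, which agrees.

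\textbf{Step 2: part (2).} Suppose $D=pD'$ with $p$ prime. If $D'>1$, assume inductively that $\mathcal H_{D',N}$ is holomorphic modular on $\Gamma_0(4D'N)$. If $D'=1$, use $\widehat{\mathcal H}_N$ from part (1).

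Apply Lemma \ref{lemma: mock to modular} to the relevant form; in the holomorphic case $c^-\equiv0$ and the hypothesis is vacuous. Then $f-f_p\in\sM_{\frac32}(\Gamma_0(4pD'N))$, because the non-holomorphic parts cancel. Its coefficients are
\[
-\left(pH_{D',N}\!\left(\tfrac{n}{p^2}\right)+\left(\JS{-n}{p}-1\right)H_{D',N}(n)\right),
\]
which by Lemma \ref{lem:HDNtoHDNp}(2) equals $H_{pD',N}(n)$. The constant term is $-(p-1)H_{D',N}(0)$, and this matches the definition of $H_{pD',N}(0)$ in \eqref{eqn:HDNcoeff}.

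The induction may add primes in any order: first build all of $N$ using part (1), then the primes of $D$ one by one. The lemma's requirement $p\nmid 2N$ is met at each stage because $D$ and $N$ are odd and coprime.

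\textbf{Main obstacle.} The delicate point is the bookkeeping of the non-holomorphic parts. We must confirm that the normalization of $c^-$ in Lemma \ref{lemma: mock to modular}, namely $2nc^-(n)$, matches \eqref{eqn:Hhat}, which gives $c^-(n)=1/(8\sqrt\pi)$ and $c^-(0)=1/(8\pi)$. We must also confirm that the doubling under $f\mapsto f_p+f$ produces exactly the factor $2^{\omega(N)}$ in \eqref{eqn:HnHat}.

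A secondary point is that Lemma \ref{lem:HDNtoHDNp} was stated for $p\nmid DN$ with $D,N$ built up in this order. We should verify that the identities in the lemma apply equally when $D'=1$, so that the base case of part (2) goes through.
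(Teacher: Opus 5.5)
Your route is the paper's. It uses the same induction: Lemma \ref{lemma: mock to modular} applied to $\widehat{\mathcal H}_N$ (taking $f+f_p$) for part (1), and to $\widehat{\mathcal H}_N$ or $\mathcal H_{D',N}$ (taking $f-f_p$) for part (2), with Lemma \ref{lem:HDNtoHDNp} identifying the result. Your handling of the non-holomorphic parts in Step 1 is correct: $c^-$ is constant on $\N$, and the doubling produces the factor $2^{\omega(pN)}$. Your $n=0$ check in Step 1 is also correct.

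The step that fails is the constant-term check in Step 2. By \eqref{eqn:HDNcoeff},
$H_{pD',N}(0)=-\frac1{12}\prod_{q\mid pD'}(q-1)\prod_{q\mid N}(q+1)=+(p-1)H_{D',N}(0)$.
The constant term of $f-f_p$, however, is $c^+(0)-pc^+(0)=-(p-1)H_{D',N}(0)$. These differ by a sign, so they do not match.

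Take $D=p$, $N=1$. Your holomorphic form $\widehat{\mathcal H}-f_p$ has constant term $(p-1)/12$, whereas \eqref{eqn:HDNcoeff} prescribes $-(p-1)/12$. All coefficients with $n>0$ agree, so the literal $\mathcal H_{p,1}$ differs from a modular form by the nonzero constant $-(p-1)/6$, which is not modular of weight $\frac32$. Hence, reading the constant term literally from \eqref{eqn:HDNcoeff}, part (2) is false whenever $\omega(D)$ is odd.

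What your induction actually proves is modularity of the series with constant term $(-1)^{\omega(D)}\cdot\bigl(-\frac1{12}\bigr)\prod_{q\mid D}(q-1)\prod_{q\mid N}(q+1)$. This normalization is also what Lemma \ref{lemma: alternative HDN} gives at $n=0$, since $b_{D,N,r}(0)=(-1)^{\omega(D/\gcd(D,r))}$. The paper's proof has the same blind spot: it applies Lemma \ref{lem:HDNtoHDNp}(2) for all $n\ge0$, but at $n=0$ that lemma contradicts \eqref{eqn:HDNcoeff}.

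You should state this corrected normalization, or restrict part (2) to $\omega(D)$ even, instead of asserting the match. Nothing downstream is affected. For $\delta\equiv 3\pmod 4$, relation \eqref{eq: character relation} forces $\omega(D)$ to be even whenever $\fg_{-\delta,D,N}$ is nonempty, and in that case the two constants coincide.
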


\begin{proof}

\noindent(1) We argue by induction on the number of prime divisors of $N$. The case $N=1$ is \eqref{eqn:Hhat}. Now suppose that the claim holds for some squarefree $N$ and let $p\nmid N$ be a prime.  Plugging $D=1$ into Lemma \ref{lem:HDNtoHDNp} (1) and taking $f=\widehat{\mathcal{H}}_{N}\in \sH_{\frac{3}{2}}\left(\Gamma_0(4N)\right)$ by the induction hypothesis, the right-hand side of Lemma \ref{lem:HDNtoHDNp} (1) is the holomorphic part of $f+f_p$, so the modularity of $\widehat{\mathcal{H}}_{pN}$ follows by Lemma \ref{lemma: mock to modular}.

\noindent(2) We fix $N$ and use induction on the number of prime factors $r$ of $D$. For the base case, we take $D=p$ prime. By part (1), for $f=\widehat{\mathcal{H}}_{N}$, we have $f\in \sH_{\frac{3}{2}}\left(\Gamma_0(4N)\right)$.  Lemma \ref{lem:HDNtoHDNp} (2) implies that $\mathcal{H}_{p,N}=f-f_p$, where $f_p$ is defined in Lemma \ref{lemma: mock to modular}. We conclude from Lemma \ref{lemma: mock to modular} that $\mathcal H_{p,N}=f-f_p\in \sM_{\frac{3}{2}}(\Gamma_0(4pN))$. 
Next suppose that $r\in \mathbb{N}$ and assume that $f=\mathcal{H}_{D,N}\in \sM_{\frac{3}{2}}(\Gamma_0(4DN))$ for all $D$ having $r$ prime factors. Then Lemma \ref{lem:HDNtoHDNp} (2) yields $\mathcal{H}_{pD,N}=f-f_p$ and Lemma \ref{lemma: mock to modular} implies that $\mathcal{H}_{pD,N}=f-f_p\in \sM_{\frac{3}{2}}(\Gamma_0(4pDN))$.
\end{proof}

We also require an expression for $\mathcal{H}_{D,N}$ in terms of Hurwitz class numbers. 

\begin{Lemma}\label[Lemma]{lemma: alternative HDN}
For $D,N\in\N$ squarefree and coprime, we have
  \begin{equation*} 
  \mathcal H_{D,N}(\tau)
  =(-1)^{\omega(D)}\sum_{r|DN}r\sum_{n\ge0} b_{D,N,r}(n)H(n)q^{r^2n},
  \end{equation*}
  where
  \begin{equation*} 
    b_{D,N,r}(n):=
    \prod_{p|\gcd\left(D,\frac{DN}{r}\right)}\left(\JS{-n}p-1\right)
    \prod_{p|\gcd\left(N,\frac{DN}{r}\right)}\left(\JS{-n}p+1\right).
  \end{equation*}
\end{Lemma}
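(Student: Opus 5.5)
We argue by induction on $\omega(DN)$, with base case $D=N=1$. There $r=1$, both products in $b_{1,1,1}(n)$ are empty, so $b_{1,1,1}(n)=1$, and the claim reads $\mathcal H_{1,1}=\mathcal H$, which is the definition. For the inductive step we take a prime $p\nmid DN$ and derive the formula for $(D,pN)$ and for $(pD,N)$ from the one for $(D,N)$, using \Cref{lem:HDNtoHDNp}. Since $D,N$ are odd in \Cref{lem:HDNtoHDNp}, we assume $p$ odd. Note also that \Cref{lem:HDNtoHDNp} only uses \Cref{lem:HDNHecke} and \eqref{eqn:HDNcoeff}, which hold coefficientwise for any squarefree coprime pair.

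Write $\mathcal H_{D,N}=(-1)^{\omega(D)}\sum_{r\mid DN} r\,\mathcal B_r$ with $\mathcal B_r(\tau):=\sum_{n\ge0} b_{D,N,r}(n)H(n)q^{r^2n}$. \Cref{lem:HDNtoHDNp} (1) says that $\mathcal H_{D,pN}=p\,\mathcal H_{D,N}|V_{p^2}+\mathcal H_{D,N}|\mathcal{T}_p^+$, where $\mathcal{T}_p^+$ multiplies the $n$-th coefficient by $(\frac{-n}{p})+1$. Part (2) is the same with the sign changed and $(\frac{-n}{p})-1$ in place of $(\frac{-n}{p})+1$. Apply this termwise to $\mathcal B_r$.

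The term $p\,\mathcal B_r|V_{p^2}$ has exponents $(pr)^2 n$ and coefficients $b_{D,N,r}(n)H(n)$. With the new modulus $DNp$ and the new divisor $r'=pr$, we have $DNp/r'=DN/r$. Hence $b_{D,pN,pr}(n)=b_{D,N,r}(n)$, since $p\nmid DN/r$ contributes no new factor. Together with the factor $p$, this gives exactly the $r'=pr$ terms $r'\,b(n)H(n)q^{r'^2n}$.

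For the term $\mathcal B_r|\mathcal{T}_p^+$, the exponent is $m=r^2n$ with $p\nmid r$. Then $(\frac{-m}{p})=(\frac{-n}{p})$, so the coefficient becomes $b_{D,N,r}(n)\big((\frac{-n}{p})+1\big)H(n)$. Now $DNp/r$ is divisible by $p$, so this is precisely $b_{D,pN,r}(n)H(n)$, which gives the $r'=r$ terms.

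Every divisor $r'$ of $pDN$ is uniquely of the form $r$ or $pr$ with $r\mid DN$, so the two families together give all the terms. In the $pD$ case the extra sign $-1$ produces $(-1)^{\omega(pD)}$. The factor $(\frac{-n}{p})-1$ is the new $D$-type factor for $r'=r$, and for $r'=pr$ no new factor appears, as before. This completes the induction.

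The main obstacle is purely bookkeeping: one has to check that the Legendre symbol in \Cref{lem:HDNtoHDNp} applied to the exponent $r^2n$ reduces to $(\frac{-n}{p})$ because $p\nmid r$, and that $p\nmid DN/r$ exactly when $r'=pr$. The $n=0$ coefficient needs a separate check. There $(\frac{0}{p})=0$, so each factor is $\pm1$. Summing $r\,b_{D,N,r}(0)$ over $r$ should give $\prod_{p\mid D}(p-1)\prod_{p\mid N}(p+1)$ up to the sign $(-1)^{\omega(D)}$, and multiplying by $H(0)=-\frac1{12}$ should give $H_{D,N}(0)$. This is automatic from the induction, since \Cref{lem:HDNtoHDNp} holds at $n=0$.
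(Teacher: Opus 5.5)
Your inductive step is the paper's argument. You use the same identities $b_{D,pN,pr}(n)=b_{D,N,r}(n)$ and $b_{D,pN,r}(n)=(\JS{-n}{p}+1)\,b_{D,N,r}(n)$, together with their $pD$-analogues, and feed them through \Cref{lem:HDNtoHDNp}. Running one induction on $\omega(DN)$ instead of the paper's two successive inductions (first on $D$ with $N=1$, then on $N$) is immaterial. For all coefficients with $n\ge 1$ your proof is correct.

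Your last paragraph, however, is wrong: the $n=0$ coefficient is not automatic. By \eqref{eqn:HDNcoeff}, $H_{pD,N}(0)=(p-1)H_{D,N}(0)$. But \Cref{lem:HDNtoHDNp}~(2) at $n=0$ gives $-\bigl(pH_{D,N}(0)-H_{D,N}(0)\bigr)=-(p-1)H_{D,N}(0)$. So part (2) of that lemma only holds for $n\ge1$; its proof writes $n=\ell^2\delta$.

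If you carry out the check you postponed, you get $b_{D,N,r}(0)=(-1)^{\omega(\gcd(D,DN/r))}$. Hence $\sum_{r\mid DN} r\,b_{D,N,r}(0)=\prod_{p\mid D}(p-1)\prod_{p\mid N}(p+1)$ exactly, with no sign. The constant term of the right-hand side is therefore $(-1)^{\omega(D)}H_{D,N}(0)$, so the identity fails at $q^0$ whenever $\omega(D)$ is odd.

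The paper's own proof avoids this only because it verifies \eqref{eqn:inductionclaim} for $n\in\N$. You should do one of two things:
\begin{enumerate}
\item restrict the claim to the coefficients with $n\ge1$; or
\item note that the constant term in \eqref{eqn:HDNcoeff} should carry an extra factor $(-1)^{\omega(D)}$.
\end{enumerate}
The signed value in the second option is what the construction in \Cref{proposition: HDN modular} actually produces. There $\mathcal H_{pD,N}=f-f_p$, and the holomorphic constant term of $f-f_p$ is $(1-p)$ times that of $f$. Inductively this yields $\prod_{p\mid D}(1-p)\,H_N(0)=(-1)^{\omega(D)}H_{D,N}(0)$.
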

\begin{proof}
By comparing Fourier coefficients on both sides, the claim is equivalent to showing that for all squarefree and coprime $D,N\in\N$ and $n\in\N$, we have
\begin{equation}\label{eqn:inductionclaim}
H_{D,N}(n)=(-1)^{\omega(D)}\sum_{r\mid DN}r b_{D,N,r}\left(\frac{n}{r^2}\right) H\left(\frac{n}{r^2}\right).
\end{equation}
We first prove \eqref{eqn:inductionclaim} for $N=1$  by induction on the number of prime divisors of $D$. The claim holds trivially for $D=1$, as $H_{1,1}(n)=H(n)$. Now suppose the claim for $\mathcal{H}_{D,1}$ and let $p\nmid D$ be an odd prime. By Lemma \ref{lem:HDNtoHDNp} (2), we have
  \begin{align}
  \mathcal H_{pD,1}(\tau)&=-p\sum_{n\ge0} H_{D,1}(n)q^{p^2n}
  -\sum_{n\ge0}\left(\JS{-n}p-1\right) H_{D,1}(n)q^n \nonumber \\ 
  &=(-1)^{\omega(pD)}\sum_{r|D}r\sum_{n\ge0} H(n)
  \left(p b_{D,1,r}(n)q^{p^2r^2n}+\left(\JS{-n}p-1\right)b_{D,1,r}(n)q^{r^2n}\right),\label{eqn:HpDinduction}
  \end{align}
using the induction hypothesis \eqref{eqn:inductionclaim}.
  Note that $b_{pD,1,pr}(n)=b_{D,1,r}(n)$. Also, since $\gcd(p,r)=1$, we have $( (\frac{-n}{p})-1)b_{D,1,r}(n)=b_{pD,1,r}(n)$. Plugging these into \eqref{eqn:HpDinduction}, it follows that
  $$
  \mathcal H_{pD,1}(\tau)
  =(-1)^{\omega(pD)}\sum_{r|pD}r\sum_{n\ge0}
  b_{pD,1,r}(n)H(n)q^{r^2n}.
  $$
 This proves \eqref{eqn:inductionclaim} for $H_{D,1}$ with $D$ squarefree by induction.

We next use induction on $N\in\N$ coprime to $D$. The base case $N=1$ is proved above. Suppose the claim holds for $\mathcal{H}_{D,N}$ and let $p\nmid DN$ be an odd prime. We plug the induction hypothesis \eqref{eqn:inductionclaim} into Lemma \ref{lem:HDNtoHDNp} (1), to obtain
  \begin{equation}\label{eqn:HDpNinduction}
      \mathcal H_{D,pN}(\tau)=(-1)^{\omega(D)}\!\sum_{r|DN}\!r\sum_{n\ge0} H(n)
      \left(p b_{D,N,r}(n)q^{p^2r^2n}+\left(\JS{-n}p+1\right)b_{D,N,r}(n)q^{r^2n}\right).
  \end{equation}
Note that $b_{D,pN,pr}(n)=b_{D,N,r}(n)$ and $b_{D,pN,r}(n)=(\JnoS{-n}{p}+1)b_{D,N,r}(n)$. Plugging these into \eqref{eqn:HDpNinduction} yields the claim \eqref{eqn:inductionclaim} for $N\mapsto pN$. 
\end{proof}

Finally we record an auxiliary identity for sums of Hurwitz class numbers similar to  \eqref{eqn:Eichler}.
\begin{Lemma}\label[Lemma]{lem:UnarySum}
\begin{enumerate}[label=\rm(\arabic*)]
\item 
For $n\in\N$ and $j\in\{0,1\}$, we have 
\[
\sum_{\substack{t\in\Z\\ t\equiv j\pmod{2},\\ t^2\equiv n-3\Pmod8}} H\left(n-t^2\right)=
\begin{cases} 
	\frac{1}{12}\sigma(n) & \text{if $n\equiv 3\pmod{4}$ and $j=0$},\\
	\frac{2}{21}\sigma(n) & \text{if $n\equiv 4\pmod{8}$ and $j=1$},\\
	0&\text{otherwise}.
\end{cases}
\]
\item 
For $n\in\N$ and $j\in\{0,1\}$, we have
\[
\sum_{\substack{t\in\Z\\ t\equiv j\pmod{2},\\ 4t^2\equiv n-3\Pmod8}} H\left(n-4t^2\right)=\begin{cases} 
\frac{1}{12}\sigma(n)&\text{if }n\equiv 3+4j\pmod{8},\\
0&\text{otherwise}.
\end{cases}
\]
\end{enumerate}
\end{Lemma}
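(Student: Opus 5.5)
The plan is to avoid modular machinery entirely and reduce both parts to Gauss's three-squares theorem and Jacobi's four-squares theorem. First observe what the congruence condition does. In every sum the argument $m:=n-t^2$ (resp.\ $m:=n-4t^2$) satisfies $m\equiv 3\pmod 8$. For such $m$, Gauss's theorem in the form $r_3(m)=12H(4m)$, together with $H(4m)=2H(m)$ for $m\equiv 3\pmod 8$, gives
\[
H(m)=\tfrac1{24}\,r_3(m).
\]
The relation $H(4m)=2H(m)$ should follow from \eqref{eq: h(p^2d)} with $p=2$, since $(\frac{-m}{2})=-1$ for $-m\equiv 5\pmod 8$. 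Moreover, $x^2+y^2+z^2=m\equiv 3\pmod 8$ forces $x,y,z$ all odd.

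Hence each left-hand side equals $\frac1{24}$ times the number of $(x,y,z,t)\in\Z^4$ with all of the following: $x,y,z$ odd, $t\equiv j\pmod 2$, and $x^2+y^2+z^2+t^2=n$ (resp.\ $+4t^2=n$). The auxiliary condition $t^2\equiv n-3\pmod 8$ is then automatic. I then count these representations with Jacobi's formula $r_4(n)=8\sum_{4\nmid d\mid n}d$.

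\emph{Part (1), $j=0$.} The condition $x,y,z$ odd and $t$ even forces $n\equiv 3\pmod 4$. Conversely, for $n\equiv 3\pmod 4$ every representation $n=\sum x_i^2$ has exactly three odd coordinates. By symmetry, the representations whose even coordinate is the last one make up a quarter of all, i.e.\ $\frac14\cdot 8\sigma(n)=2\sigma(n)$ of them. Dividing by $24$ gives $\frac1{12}\sigma(n)$.

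\emph{Part (1), $j=1$.} All four coordinates are odd, so $n\equiv 4\pmod 8$. Writing $n=4n'$ with $n'$ odd, the representations of $n$ by four squares split into those with all coordinates odd and those with all coordinates even. The all-even ones number $r_4(n')=8\sigma(n')$, and the total is $r_4(n)=24\sigma(n')$. So there are $16\sigma(n')$ all-odd representations. This gives a value proportional to $\sigma(n')$, namely $\frac23\sigma(n')$. Expressing it in terms of $\sigma(n)=7\sigma(n')$ yields $\frac2{21}\sigma(n)$, which matches the stated constant.

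\emph{Part (2).} With $4t^2$ in place of $t^2$, the parity of $4t^2\pmod 8$ is $4j$, so $n\equiv 3+4j\pmod 8$ is forced. The count becomes the number of representations $n=x^2+y^2+z^2+s^2$ with $s=2t$, $t\equiv j\pmod 2$, i.e.\ with $s\equiv 2j\pmod 4$. For $n\equiv 3\pmod 4$, the three odd coordinates contribute $3\pmod 8$, so $s^2\equiv n-3\pmod 8$ determines $s\pmod 4$ uniquely. Hence every representation with last coordinate even is counted, giving again $2\sigma(n)/24=\frac1{12}\sigma(n)$. For other residue classes the sum is empty.

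The only step needing care is the input $H(m)=r_3(m)/24$ for $m\equiv 3\pmod 8$. This amounts to Gauss's theorem plus the factor $2$ relating $H(4m)$ and $H(m)$, including the contributions from $w_{-3}$ at $m=3$, which need a quick check. If one prefers a proof in the spirit of the paper, the same statement can be obtained by noting the following. The non-holomorphic part of $\widehat{\mathcal H}$ in \eqref{eqn:Hhat} is supported on exponents $-n^2\not\equiv 3\pmod 8$. Hence $\mathcal H|S_{8,3}$ is holomorphic modular, and its product with $\Theta|S_{2,j}$ (or $\Theta|S_{2,j}V_4$) is a weight $2$ holomorphic form on $\Gamma_0(64)$. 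One would then identify this form with an Eisenstein series by a Sturm bound check. The main obstacle on that route is ruling out a cusp-form component; this is why I would use the elementary counting argument above.
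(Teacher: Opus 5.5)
Your counting argument is correct and gives a genuinely different proof from the paper's. However, the justification you give for the key input $H(m)=\frac1{24}r_3(m)$ $(m\equiv 3\pmod 8)$ is wrong in two places, and the two errors happen to cancel.

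\begin{itemize}
\item Gauss's formula $r_3(m)=12H(4m)$ holds only for $m\equiv 1,2\pmod 4$. For $m\equiv 3\pmod 8$ the correct form is $r_3(m)=24H(m)$, equivalently $r_3(m)=12(H(4m)-2H(m))$.
\item The relation $h^*(4d)=\bigl(2-\left(\frac{d}{2}\right)\bigr)h^*(d)$ with $\left(\frac{d}{2}\right)=-1$ gives $h^*(4d)=3h^*(d)$, hence $H(4m)=H(m)+3H(m)=4H(m)$, not $2H(m)$.
\end{itemize}

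For example, with $m=3$ one has $r_3(3)=8$, $H(3)=\frac13$, $H(12)=\frac43$, so $12H(12)=16\neq r_3(3)$. Simply cite $r_3(m)=24H(m)$ for $m\equiv 3\pmod 8$. With Hurwitz's weighting this holds uniformly (e.g.\ $m=3,27$), so no separate check of $w_{-3}$ is needed.

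With that input corrected, the rest of your argument is right:
\begin{itemize}
\item The congruence $t^2\equiv n-3\pmod 8$ (resp.\ $4t^2\equiv n-3\pmod 8$) is automatic once $x,y,z$ are odd.
\item The parity constraints force vanishing outside the stated residue classes.
\item Jacobi's formula gives $2\sigma(n)/24$, $16\sigma(n')/24=\frac{2}{21}\sigma(n)$ (using $\sigma(4)=7$), and $2\sigma(n)/24$ in the three nonzero cases.
\end{itemize}

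The paper proceeds differently. It notes that sieving kills the non-holomorphic part of $\widehat{\mathcal H}$, so the generating function $\mathcal H|S_{8,3}\cdot\Theta|S_{2,j}$ (resp.\ $\mathcal H|S_{8,3}\cdot\Theta|S_{2,j}V_4$) lies in $M_2(\Gamma_0(64))$. It identifies the right-hand side as a sieved $E_2$ in the same space and then checks finitely many coefficients via the valence formula.

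In modular language, your proof is the identity $\mathcal H|S_{8,3}=\frac1{24}(\Theta|S_{2,1})^3$ (Gauss) combined with Jacobi's evaluation of $\Theta^4$.
\begin{itemize}
\item Your route buys a computer-free proof that explains where the constants come from, in particular the odd-looking $\frac{2}{21}$.
\item The paper's route stays inside its general framework and needs no input about sums of three or four squares, at the cost of a machine verification.
\end{itemize}

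Finally, the ``cusp-form component'' you name as the main obstacle on the modular route is not actually an obstacle. A Sturm-bound comparison of finitely many coefficients rules out any difference between the two forms, cuspidal or not, and that is exactly what the paper does.
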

\begin{proof}
Since the proofs of both parts are similar, we only prove (1). Note first that the generating function of the left-hand side is
\begin{equation}\label{sum}
	\sum_{n\ge0}\sum_{\substack{t\in\Z\\ t\equiv j\pmod{2}\\ t^2\equiv n-3\pmod{8}}} H\left(n-t^2\right) q^n=\left(\mathcal{H}|S_{8,3} \Theta|S_{2,j}\right)(\tau).
\end{equation}
Note that $\widehat{\mathcal{H}}|S_{8,3}=\mathcal{H}|S_{8,3}$, because the non-holomorphic part of $\widehat{\mathcal{H}}$ is only supported on powers of $q$ which are negatives of squares. We conclude from \eqref{eqn:Hhat} that \eqref{sum} is in $M_2\left(\Gamma_0(64)\right)$.The generating function for the right-hand side of (1) is a multiple of $E_2|S_{4,3}$ or $E_2|S_{8,4}$, which is also in $\sM_2(\Gamma_0(64))$. Part (1) then follows by the valence formula after computing $96$ Fourier coefficients.
\end{proof}

\section{Genus subspaces of $\sM_{\kappa}(\Gamma_0(4N))$ and $\sH_{\kappa}(\Gamma_0(4N))$}\label{sec:GenusSubspace}

Throughout this section, we assume that $N\in\N$ is odd and squarefree and define $\delta_N:= 4N$ if $N\equiv 1\pmod{4}$ and $\delta_N:= N$ if $N\equiv 3\pmod{4}$. 
Then $-\delta_N$ is a fundamental discriminant and 
\begin{equation}\label{eqn:-4Nsplit}
-\delta_N=\begin{cases}
\prod_{p\mid N}{p^*}&\text{if }N\equiv 3\pmod{4},\\[-0.3cm]
\\
-4\prod_{p\mid N}{p^*}&\text{if }N\equiv 1\pmod{4}.
\end{cases}
\end{equation}
For a real character $\chi$ (mod $4N$) and $p\mid \delta_N$, we uniquely split $\chi=\varphi\psi$ with $\varphi$ having modulus coprime to $p$ and $\psi$ having modulus equal to a power of $p$. For $n\in\Z$ coprime to $4N$, we use the splitting \eqref{eqn:-4Nsplit} to define for $p\mid\delta_N$
\begin{equation}\label{eqn:epsilonchi1p}
\varepsilon_{N,\chi,n}(p):=\varphi(p)\begin{cases}
 \chi_{-4}(n)&\text{if $p=2$ and $N\equiv 1\pmod{4}$},\\
\chi_{p^*}(n) &\text{if }p\mid N.
\end{cases}
\end{equation}
Thus $\varepsilon_{N,\chi,n}$ is a sign function depending on the prime divisors of $\delta_N$. Since $n$ is coprime to $4N$, $\varepsilon_{N,\chi,n}$ is a function from $\{ p\mid \delta_N\}$ to $\{\pm 1\}$. Define 
\begin{equation}\label{eqn:epsilongenus}
\mathcal{E}_N:=\left\{\varepsilon:\{p\mid \delta_N\}\to \{\pm 1\}\text{ such that }\prod_{p\mid \delta_N} \varepsilon(p) =1\right\}.
\end{equation}
For $\varepsilon\in \mathcal{E}_N$, $\kappa\in\frac{1}{2}\Z$, and a character $\chi$ of modulus $4N$, we define $\mM_{\kappa}^{\varepsilon,\pm}(\Gamma_0(4N),\chi)$ to be the space of functions $f$ satisfying the following conditions:
\begin{enumerate}
\item We have $f\in \mM_{\kappa}(\Gamma_0(4N),\chi)$.
\item Write the Fourier expansion $f(\tau)=\sum_{n\in\Z} c_{\tau_2}(n)q^n $. If $n$ is coprime to $4N$ and 
$\varepsilon_{N,\chi,\pm n}\neq \varepsilon$
(i.e., there exists $p\mid \delta_N$ for which $\varepsilon_{N,\chi,\pm n}(p)\neq \varepsilon(p)$), then $c_{\tau_2}(n)=0$. 
\end{enumerate}
We further let $\sH_{\kappa}^{\varepsilon,\pm}(\Gamma_0(4N),\chi)$ (resp. $\sM_{\kappa}^{\varepsilon,\pm}(\Gamma_0(4N),\chi)$) be the subspace of $\mM_{\kappa}^{\varepsilon,\pm}(\Gamma_0(4N),\chi)$ contained in $\sH_{\kappa}(\Gamma_0(4N),\chi)$ (resp. $\sM_{\kappa}(\Gamma_0(4N),\chi)$). We omit $\chi$ in the notation if it is trivial.
\begin{remarks}
\noindent

\noindent
\begin{enumerate}
\item Noting the relation \eqref{eq: character relation}, for a genus $\fg$ of binary quadratic forms of discriminant $-\delta_N$, the genus characters give rise to a function $\varepsilon_{\fg}\in \mathcal{E}_N$ by $\varepsilon_{\fg}(2):=\chi_{-4}(\fg)$ if $N\equiv 1\pmod{4}$ and $\varepsilon_{\fg}(p):=\chi_{p^*}(\fg)$ for $p\neq 2$. 
Moreover, since $\#\mathcal{E}_N=2^{\omega\left(\delta_N\right)-1}$ equals the number of genera computed by Gauss, every $\varepsilon\in \mathcal{E}_N$ arises from a unique genus $\fg$ of binary quadratic forms for which $\varepsilon=\varepsilon_{\fg}$. We refer to $\sM_{\kappa}^{\varepsilon_\fg,\pm}(\Gamma_0(4N))$ as the $\pm$ \begin{it}genus subspaces\end{it}.
\item For $\kappa=k+\frac{1}{2}$  $(k\in\N_0)$, Kohnen \cite[Proposition 4]{Kohnen-newform} proved that $w_p:=p^{-\frac{k}{2}+\frac{1}{4}}|U_p W_p$ is an involution on  $\sM_{k+\frac12}$ $(\Gamma_0(4N))$ and that the $\pm 1$-eigenspaces are subspaces of $\sM_{k+\frac12}(\Gamma_0(4N))$ consisting of modular forms whose $n$-th Fourier coefficient vanishes if \!\footnote{This corrects a typo in \cite[Proposition 4]{Kohnen-newform}.}
$
(\frac{(-1)^kn}p)=\mp(\frac{\frac{N}{p}}p).
$
  Therefore, the subspaces defined above are common eigenspaces of $w_p$ $(p|N)$. If $f\in \sM_{k+\frac{1}{2}}^{\fg,\varepsilon}(\Gamma_0(4N))$, then
  \begin{equation} \label{eq: eigenvalues}
  f|w_p=\JS{\varepsilon(-1)^k\frac{N}{p}}p\chi_{p^*}(\fg)f.
  \end{equation}
\item For $\kappa=k\in\N_0$, $\sM_k^{\fg,\varepsilon}(\Gamma_0(4N),\chi_{(-1)^k4N})$ are common eigenspaces of $p^{-\frac{k}{2}+\frac{1}{2}}|U_p W_p$. By Lemma \ref{lemma: g eigenvalues} \textup{(2)}, if $f\in\mM_k^{\fg,\pm}(\Gamma_0(4N),\chi_{(-1)^k4N}),$
 then
   \begin{equation}\label{eqn:fGpmUpWp}
  f|p^{-\frac{k}{2}+\frac{1}{2}}U_p W_p=\pm \varepsilon_p^{\pm 1}\chi_{p^*}(\fg)f.
  \end{equation}
In particular, the corresponding eigenvalues belong to $\{ \pm 1, \pm i\}. $
\end{enumerate}
\end{remarks}
\begin{Example}
We decompose the space $\sM_{\frac{3}{2}}(\Gamma_0(12))$ into subspaces by computing $\sM_{\frac{3}{2}}^{\varepsilon,\pm}(\Gamma_0(12))$ for $\varepsilon\in \mathcal{E}_3$. First note that since the Nebentypus character is trivial, the character $\varphi$ in \eqref{eqn:epsilonchi1p} is trivial for all primes $p$. We next determine the set $\mathcal{E}_{3}$. Since $N=3$, we have $\delta_N=3$, and hence by \eqref{eqn:epsilongenus}
\[
\mathcal{E}_3=\left\{\varepsilon:\{3\}\mapsto \{\pm 1\}: \prod_{p\mid 3}\varepsilon(p)=1\} = \{\varepsilon:\{3\}\mapsto \{\pm 1\}: \varepsilon(3)=1\right\}
\]
is trivial.  We have $\varepsilon=\varepsilon_{\fg}\in \mathcal{E}_{3}$ with $\fg$  the principal genus of $\mathcal{Q}_{-3}$ . The principal genus consists of a single class with representative $Q_3(\bm{m})=m_1^2+m_1m_2+m_2^2$. The theta function
\[
\Theta_{Q_3}(\tau)=\sum_{\bm{m}\in\Z^2} q^{Q_3(\bm{m})}=\sum_{n\geq 0} c(n)q^n
\]
satisfies $c(n)=0$ if $\chi_{-3}(n)=-1$. Hence
\[
f_1(\tau):=\Theta_{Q_3}(4\tau)\Theta(3\tau)=1+2q^3+6q^4+12q^7+8q^{12}+12q^{15}+18q^{16}+O\left(q^{17}\right)\in \sM_{\frac{3}{2}}^{\fg,+}(\Gamma_0(12)).
\]
 Using Magma \cite{Magma}, we compute a basis for $\sM_{\frac{3}{2}}(\Gamma_0(12))$ consisting of $f_1$ and
\begin{align*}
      f_2(\tau)&=q+q^3+2q^4+2q^6+2q^7+q^9+4q^{10}+2q^{12}+4q^{13}+O\left(q^{14}\right),\\
      f_3(\tau)&=q^2-q^4+2q^5+q^6-2q^7+q^8+2q^9+2q^{11}-q^{12}+4q^{14}+O\left(q^{15}\right).
\end{align*}
Using the valence formula, we verify that if $\gcd(n,12)=1$ and $\chi_{-3}(-n)=-1$ (i.e., $n\equiv 1\Pmod{6}$), then the $n$-th Fourier coefficient of
  $$
  f_1(\tau)+6f_3(\tau)=1+6q^2+2q^3+12q^5+6q^6+6q^8+12q^9+12q^{11}+O\left(q^{12}\right)
  $$
vanishes. Thus $f_1+6f_3\in \sM_{\frac{3}{2}}^{\fg,-}(\Gamma_0(12))$. Note that the above construction demonstrates how elements of $ \sM_{\frac{3}{2}}^{\fg,+}(\Gamma_0(4N))$ may be naturally constructed via theta functions, whereas a general construction of elements in $ \sM_{\frac{3}{2}}^{\fg,-}(\Gamma_0(4N))$ is not known. 
\end{Example}

\noindent The following compatibility of the genus subspaces with the operators $U_d$ and $W_p$ is used repeatedly. 
\begin{Lemma} \label[Lemma]{lemma: UW on g}
  Let $N\in\N$ be odd and squarefree,  $\fg$ be
  a genus of binary quadratic forms of discriminant $-4N$, $d$ be a positive divisor of $N$, and $p$ be a prime divisor of $N$ with $p\nmid d$. 
\begin{enumerate}
\item 
For $f\in\mM_{k+\frac{1}{2}}^{\fg,\pm}(\Gamma_0(4N))$, we have
\[
    \begin{split}
      p^{\frac{k}{2}-\frac{1}{4}}f|U_d W_p
      =\varepsilon_p^{\pm}\chi_{p^*}(\fg)f|U_{dp}.
    \end{split}
\]
\item 
For $g\in  \mM_k^{\fg,\pm}(\Gamma_0(4N),\chi_{(-1)^k4N})$, we have
\[
    p^{\frac{k}{2}-\frac{1}{2}}g|U_d W_p=\varepsilon_p^{\pm}\JS{d}p\chi_{p^*}(\fg)g|U_{dp}.
\]
\end{enumerate}
\end{Lemma}
\begin{proof}
(1)
 Since $W_p$ is an involution, the claim is equivalent to
  \begin{equation} \label{eq: goal}
    p^{\frac{k}{2}-\frac{1}{4}}f|U_{d} W_{p}^2=\varepsilon_p^{\pm}\chi_{p^*}(\fg)f|U_{dp} W_p.
  \end{equation}
Extending \cite[Proposition 1.5]{Shimura-correspondence} to real-analytic modular forms gives $f|U_{d}\in\mM_{k+\frac{1}{2}}(\Gamma_0(4N),\chi_{4d})$. Thus, by Lemma \ref{lemma: Atkin-Lehner} (1),
  $$
  	p^{\frac{k}{2}-\frac{1}{4}}f|U_{d} W_{p}^2=p^{\frac{k}{2}-\frac{1}{4}}\varepsilon_p^{2k+1}\JS{\frac{N}{p}}p\JS{d}pf|U_{d}.
  $$
  On the other hand, by the extension of \cite[Proposition 1.5]{Shimura-correspondence} again, $f|U_p\in\mM_{k+\frac{1}{2}}(\Gamma_0(4N),\chi_{4p})$. The decomposition of the character $\chi_{4p}$ into a product of two characters of modulus
  $4\frac{N}{p}$ and $p$ is
  $$
  \JS{4p}{n}=\JS{4(-1)^{\frac{p-1}{2}}}{n}\JS{n}{p}.
  $$
  Therefore, by Lemma \ref{lemma: Atkin-Lehner} (2) and \eqref{eq: eigenvalues}, we have
  \begin{equation*}
      f|U_{dp} W_p = f|U_p U_{d} W_p=\JS{d}{p}f|U_p W_p U_{d} = p^{\frac{k}{2}-\frac{1}{4}}\JS dp\JS{\pm(-1)^{k}\frac{N}{p}}p\chi_{p^*}(\fg)f|U_{d}.
  \end{equation*}
This proves \eqref{eq: goal}, which implies the claim.

\noindent
(2) Replacing \Cref{lemma: Atkin-Lehner} and \eqref{eq: eigenvalues} with \Cref{lemma:W^2 2} and \eqref{eqn:fGpmUpWp}, this claim follows similarly.
\end{proof}

We next show that $\mathcal H_{D,N}$ lie in certain genus subspaces. For this, we define $\varepsilon_{D,N}\in \mathcal{E}_{DN}$ by
\[
\varepsilon_{D,N}(p):=
\begin{cases}
\left(\frac{-1}{p}\right)^{\omega(D)}&\text{if }p\mid N,\\
-\left(\frac{-1}{p}\right)^{\omega(D)}&\text{if }p\mid D,\\[.5em]
(-1)^{\omega(D)}&\text{if $p=2$ and $DN\equiv 1\pmod{4}$}.
\end{cases}
\]
\begin{Proposition} \label[Proposition]{proposition: genus of HDN}
  Assume that $D$, $N\in\N$ are odd, squarefree, and coprime.
\noindent

\noindent
\begin{enumerate}
\item For $\omega(D)$ even,
  $\widehat{\mathcal H}_{N}\in\sH_{\frac{3}{2}}^{\varepsilon_{D,N},-}\left(\Gamma_0(4N)\right) \text{ if }D=1 \text{ and }
    \mathcal H_{D,N}\in\sM_{\frac{3}{2}}^{\varepsilon_{D,N},-}\left(\Gamma_0(4DN)\right)
    \text{ if }D>1.$
\item
   If $\omega(D)$ is odd, then $\mathcal H_{D,N}\in \sM_{\frac{3}{2}}^{\varepsilon_{D,N},+}(\Gamma_0(4DN))$.
\end{enumerate}
\end{Proposition}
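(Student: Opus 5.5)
By Proposition \ref{proposition: HDN modular}, $\widehat{\mathcal H}_N\in\sH_{\frac32}(\Gamma_0(4N))$ and, for $D>1$, $\mathcal H_{D,N}\in\sM_{\frac32}(\Gamma_0(4DN))$, in both cases with trivial character. So condition (1) in the definition of the genus subspaces holds, and only the vanishing condition (2) on Fourier coefficients remains. Since the character is trivial, $\varphi$ in \eqref{eqn:epsilonchi1p} is trivial. Hence for $m$ coprime to $4DN$ we have $\varepsilon_{DN,1,m}(p)=\chi_{p^*}(m)$ for $p\mid DN$, and $\varepsilon_{DN,1,m}(2)=\chi_{-4}(m)$ when $DN\equiv1\pmod 4$. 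We must show that $c(n)=0$ whenever $\gcd(n,4DN)=1$ and $\varepsilon_{DN,1,\mp n}\neq\varepsilon_{D,N}$, where we use $-n$ if $\omega(D)$ is even and $+n$ if $\omega(D)$ is odd.

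\emph{Holomorphic coefficients.} Take $n$ coprime to $4DN$ with $H_{D,N}(n)\neq0$. By \eqref{eqn:HDNcoeff}, $-n=\sqpt^2\Delta$ is a discriminant, so $n\equiv 3\pmod 4$ (as $n$ is odd), and therefore $-n\equiv1\pmod4$. For $p\mid DN$ we have $p\nmid n$, so $p\nmid r$ for every $r\mid\sqpt$ and $p\nmid\Delta$. By \eqref{eq: hDN}, every term $h_{D,N}^*(r^2\Delta)$ carries the same local factor $\prod_{p\mid D}(1-(\frac{\Delta}{p}))\prod_{p\mid N}(1+(\frac{\Delta}{p}))$, and $(\frac{\Delta}{p})=(\frac{-n}{p})$. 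Nonvanishing therefore forces
\[
\left(\tfrac{-n}{p}\right)=-1 \text{ for } p\mid D,\qquad \left(\tfrac{-n}{p}\right)=1 \text{ for } p\mid N.
\]
Next we convert genus characters into Legendre symbols by quadratic reciprocity: for odd $m$ coprime to $p$ we have $\chi_{p^*}(m)=(\frac{m}{p})$. This needs some care with the sign of $m$ and the Kronecker symbol; we check it directly from $p^*\equiv1\pmod 4$.

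\emph{Case $\omega(D)$ even.} We evaluate at $-n$. For odd $p$, $\chi_{p^*}(-n)=(\frac{-n}{p})$, which equals $1$ for $p\mid N$ and $-1$ for $p\mid D$; this is exactly $\varepsilon_{D,N}(p)$ since $(\frac{-1}{p})^{\omega(D)}=1$. At $p=2$ (when $DN\equiv1\pmod4$), $\chi_{-4}(-n)=1=(-1)^{\omega(D)}$ because $-n\equiv1\pmod4$.

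\emph{Case $\omega(D)$ odd.} We evaluate at $+n$. Here $\chi_{p^*}(n)=(\frac{-1}{p})(\frac{-n}{p})$, which equals $(\frac{-1}{p})$ for $p\mid N$ and $-(\frac{-1}{p})$ for $p\mid D$; this agrees with $\varepsilon_{D,N}(p)$ since $(\frac{-1}{p})^{\omega(D)}=(\frac{-1}{p})$. At $p=2$, $\chi_{-4}(n)=-1=(-1)^{\omega(D)}$ because $n\equiv3\pmod4$.

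Thus in both cases every nonzero coefficient with $n$ coprime to $4DN$ has $\varepsilon_{DN,1,\mp n}=\varepsilon_{D,N}$, which is condition (2).

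\emph{Non-holomorphic part.} This only arises for $D=1$, so $\omega(D)=0$ is even and we are in the ``$-$'' case. By \eqref{eqn:HnHat}, the non-holomorphic part of $\widehat{\mathcal H}_N$ is supported on $q^{0}$ and on $q^{-k^2}$. For an index $n=-k^2$ coprime to $4N$ we get $\varepsilon_{N,1,-n}(p)=\chi_{p^*}(k^2)=1=\varepsilon_{1,N}(p)$, including at $p=2$. Hence these terms never violate condition (2).

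The main obstacle I expect is the bookkeeping in the reciprocity step, namely confirming $\chi_{p^*}(\pm n)=(\frac{\pm n}{p})$ with the Kronecker-symbol conventions for negative arguments. It is also worth checking that the constraint $\prod_p\varepsilon(p)=1$ built into $\mathcal E_{DN}$ is automatically satisfied at $p=2$, which the congruence $n\equiv3\pmod 4$ handles.
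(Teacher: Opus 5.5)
Your proposal is correct and follows essentially the same route as the paper. Modularity comes from Proposition~\ref{proposition: HDN modular}; on coefficients coprime to $4DN$, the local factors $m_{D,N,p}$ force $\left(\frac{-n}{p}\right)=-1$ for $p\mid D$ and $\left(\frac{-n}{p}\right)=1$ for $p\mid N$, and these values are matched with $\varepsilon_{D,N}$ via $\chi_{p^*}(m)=\left(\frac{m}{p}\right)$ and $\chi_{-4}$ (with the twist by $\left(\frac{-1}{p}\right)^{\omega(D)}$ when $\omega(D)$ is odd), while the non-holomorphic terms $q^{-k^2}$ of $\widehat{\mathcal H}_N$ are checked separately.
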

\begin{proof}
(1) Let $n\in \N$ be coprime to $4DN$. From the definitions \eqref{eqn:HDNcoeff} and \eqref{eq: hDN}, we see that the $n$-th Fourier coefficient $H_{D,N}(n)$ of $\mathcal H_{D,N}$ is only possibly nonzero if $n$ satisfies $n\equiv0, 3\Pmod 4$ and
\begin{equation}\label{eqn:vanish-n}
\JS{-n}p=\begin{cases}
  1 &\text{if }p|N, \\
  -1 &\text{if }p|D. \end{cases}
\end{equation}
Thus $\mathcal H_{D,N}\in\sM_{\frac{3}{2}}^{\varepsilon_{D,N},-}(\Gamma_0(4DN))$ if $D>1$. If $D=1$, then we must also verify that the coefficients of the non-holomorphic part satisfy the claimed vanishing conditions. Note that in this case $\varepsilon_{D,N}(p)=1$ for all $p\mid \delta_{N}$. It follows from the definition that the $n$-th Fourier coefficient of the non-holomorphic part of $\widehat{\mathcal H}_{N}$ vanishes if $-n$ is not a square $\!\!\Pmod{4N}$.  Thus $\widehat{\mathcal H}_{1,N}\in\sH_{\frac{3}{2}}^{\varepsilon_{D,N},-}(\Gamma_0(4N))$.
\noindent

\noindent (2) Suppose that $\omega(D)$ is odd and note that
\[
\left(\frac{-n}{p}\right)=\left(\frac{-1}{p}\right) \left(\frac{n}{p}\right)=\left(\frac{-1}{p}\right)^{\omega(D)} \left(\frac{n}{p}\right).
\]
Rearranging \eqref{eqn:vanish-n}, we conclude that $H_{D,N}(n)=0$ unless $n\equiv 0,3\pmod{4}$ and
\[
\JS{n}p=\begin{cases}
  \left(\frac{-1}{p}\right) &\text{if }p|N, \\[-12pt]\\
  -\left(\frac{-1}{p}\right) &\text{if }p|D.
 \end{cases}
\]
Hence if $H_{D,N}(n)\neq 0$, then $\JnoS{n}{p}=\varepsilon_{D,N}(p)$ for all $p\mid DN$ and $\chi_{-4}(n)=-1=\varepsilon_{D,N}(2)$ if $DN\equiv 1\pmod{4}$. We conclude that $\mathcal{H}_{D,N}\in \sM_{\frac{3}{2}}^{\varepsilon_{D,N},+}(\Gamma_0(4DN))$.
\end{proof}

\section{A level-lowering theorem for genus subspaces}\label{sec:mainthm}
 The main technical difficulty in proving Theorems \ref{theorem: D>1} and \ref{theorem: principal genus} is the following general level-lowering theorem, which is the main goal of this section.
\begin{Theorem} \label{theorem: level reduction}
 Let $N\in\N$ be odd and squarefree, $\fg$ be a genus of binary quadratic forms of discriminant $-4N$, $\varepsilon\in\{\pm1\}$, and $k_1$, $k_2 \in \N_{0}$. For $f\in \mM_{k_1+\frac{1}{2}}^{\fg,\varepsilon}\!(\Gamma_0(4N))$ and $g\in \mM_{k_2}^{\fg,-\varepsilon}(\Gamma_0(4N),\chi_{(-1)^k4N})$,
  \begin{equation*} 
  \sum_{d|N}\left(f|U_d \cdot g|U_d \right)|U_{\frac{N}{d}}
  \in\mM_{k_1+k_2+\frac{1}{2}}\left(\Gamma_0(4)\right).
  \end{equation*}
\end{Theorem}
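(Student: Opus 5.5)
Denote the sum in Theorem \ref{theorem: level reduction} by $F:=\sum_{d|N}(f|U_d\cdot g|U_d)|U_{\frac Nd}$. The plan is to show that $F$ is invariant under enough of $\Gamma_0(4)$ to generate it together with $\Gamma_0(4N)$. First we establish a priori that $F\in\mM_{k_1+k_2+\frac12}(\Gamma_0(4N))$. By the extension of \cite[Proposition 1.5]{Shimura-correspondence}, $f|U_d\in\mM_{k_1+\frac12}(\Gamma_0(4N),\chi_{4d})$. Analogously, $g|U_d$ has character $\chi_{(-1)^{k_2}4N}\chi_{d}$ up to sign twist, so the product has character $\chi_{4N}$ times a factor from $d$. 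Applying $U_{N/d}$ multiplies this by a character attached to $N/d$, and all characters collapse to the trivial one on $\Gamma_0(4N)$. One then uses that $\Gamma_0(4)$ is generated by $\Gamma_0(4N)$ together with the Atkin--Lehner-type elements $W_p$ for $p\mid N$. Since $N$ is squarefree, these $W_p$ lie in $\Gamma_0(4)$ up to the scalar $p$ hidden in the matrix $\left(\begin{smallmatrix}p&r\\4N&ps\end{smallmatrix}\right)$, more precisely after composing with $U_p$/$V_p$ normalizations. Hence it suffices to show that $F|W_p=p^{\frac{\kappa}{2}}\cdot(\text{explicit constant})F$, with the constant matching the one that expresses ``$\Gamma_0(4N/p)$-invariance''. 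Equivalently, $F$ is invariant under the weight $\kappa=k_1+k_2+\frac12$ slash action of the matrices of $\Gamma_0(4N/p)$ not in $\Gamma_0(4N)$. Induction on $\omega(N)$, one prime at a time, then lowers the level to $4$.

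The key computation is for a fixed prime $p\mid N$. We pair the terms $d$ and $dp$ with $p\nmid d$, writing $M:=N/(dp)$. Since $W_p$ is (essentially) multiplicative across products and commutes with $U_M$ up to the character value $\chi_2(M)$ by Lemma \ref{lemma: Atkin-Lehner}(2) and Lemma \ref{lemma:W^2 2}(2), we get
\[
(f|U_d\cdot g|U_d)|U_{pM}W_p\quad\text{and}\quad (f|U_{dp}\cdot g|U_{dp})|U_M W_p.
\]
For the second, move $W_p$ inside and use Lemma \ref{lemma: UW on g}. Applying (1) to $f$ (eigenvalue sign $\varepsilon$) and (2) to $g$ (sign $-\varepsilon$), we obtain
\[
f|U_dW_p\cdot g|U_dW_p = p^{-\frac{k_1}{2}+\frac14-\frac{k_2}{2}+\frac12}\,\varepsilon_p^{\varepsilon}\varepsilon_p^{-\varepsilon}\JS dp\chi_{p^*}(\fg)^2\, f|U_{dp}\cdot g|U_{dp}.
\]
The opposite signs are exactly what make $\varepsilon_p^{\varepsilon}\varepsilon_p^{-\varepsilon}=1$, and $\chi_{p^*}(\fg)^2=1$; this is where the hypothesis on opposite genus subspaces enters. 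Thus the $d$-term with $W_p$ applied becomes, up to an explicit power of $p$ and $\JS dp$, the $dp$-term with $U_p$ removed. To handle the $U_p$ in the first term, we write $U_pW_p$ via $W_p^2$ (Lemma \ref{lemma: Atkin-Lehner}(1), Lemma \ref{lemma:W^2 2}(1)) and the identity $h|U_pW_p = p^{\frac{\kappa}{2}-1}\,\mathrm{tr}$-type expression. Concretely, we use that $\{U_p, W_p\}$ act on the $2$-dimensional span of $h$ and $h|U_p$ for $h=f|U_d\cdot g|U_d$, in the manner of Kohnen's proof that $w_p$ is an involution \cite[Proposition 4]{Kohnen-newform}. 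This should show that the pair sum $h|U_{p}U_M+h|W_pU_M$-shaped expression is fixed by $W_p$ with the correct constant. The powers of $p$ are normalized so that the contributions swap.

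The main obstacle will be this last step: controlling $(h)|U_pW_p$ when $h$ itself is a product and not in a genus subspace. I would first try to avoid it by proving instead that $F$ equals a multiple of the trace $\mathrm{Tr}^{4N}_{4N/p}$ of a simpler form, namely $\sum_{d\mid N/p}(f|U_d\cdot g|U_d)|U_{N/(dp)}$ viewed on $\Gamma_0(4N)$. Its trace is $G+G|W_p|U_p$ (suitably normalized), by the coset decomposition $\Gamma_0(4N/p)=\Gamma_0(4N)\cup\bigcup_j\Gamma_0(4N)W_p\left(\begin{smallmatrix}1&j\\0&p\end{smallmatrix}\right)$. By the identity above, $G|W_pU_p$ reproduces the $dp$-terms. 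A trace is automatically modular on the smaller level, so iterating over $p\mid N$ gives level $4$. Keeping track of the characters $\JS dp$ and the $\varepsilon_p$ factors in this coset computation is the delicate bookkeeping.
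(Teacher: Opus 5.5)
There is a genuine gap: neither of your two mechanisms, as written, actually lowers the level.

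\textbf{The primary strategy.} It rests on a false premise. The matrix $W_p$ has determinant $p$, and $W_p/\sqrt p$ is not integral, so $W_p$ does not lie in $\Gamma_0(4)$ under any normalization. Together with $\Gamma_0(4N)$ it generates an Atkin--Lehner-type extension of $\Gamma_0(4N)$, not $\Gamma_0(4N/p)$. Consequently an identity $F|W_p=cF$ carries no information about the level; forms of exact level $4N$, such as newforms in integral weight, satisfy such identities. Nor is it equivalent to invariance under $\Gamma_0(4N/p)$. That invariance means invariance under the extra coset representatives $p^{-1}W_p\begin{psmallmatrix}1&j\\0&p\end{psmallmatrix}$, which is equivalent to an identity of the shape $F|W_p=c\,F|V_p$. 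Proving this directly requires controlling $h|U_pW_p$, which is exactly the obstacle you flag and never resolve.

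\textbf{The trace fallback.} This is the right idea, but you apply it to the wrong function. Put $G:=\sum_{d\mid \frac Np}(f|U_d\cdot g|U_d)|U_{\frac{N}{dp}}$.
\begin{itemize}
\item The terms of $F$ with $p\nmid d$ are $G|U_p$, not $G$.
\item The terms with $p\mid d$ sum to a multiple of $G|W_p$, not of $G|W_pU_p$. This uses Lemma \ref{lemma: Atkin-Lehner} (2) and Lemma \ref{lemma: UW on g}: the $\JS dp$ from the latter cancels the $\JS{N/(dp)}{p}$ produced by moving $W_p$ past $U_{\frac{N}{dp}}$, leaving a constant independent of $d$.
\end{itemize}
The correct identity is therefore
\[
F=G|U_p+p^{\frac\kappa2-1}\JS{N/p}{p}\,G|W_p,\qquad \kappa:=k_1+k_2+\tfrac12 ,
\]
which is not $G+c\,G|W_pU_p$. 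Moreover, $G$ has character $\JS{4p}{\cdot}$ on $\Gamma_0(4N)$: each $f|U_d\cdot g|U_d$ has character $\chi_{4d}\chi_{4N}$, and $U_{\frac{N}{dp}}$ multiplies this by $\chi_{\frac{4N}{dp}}$. This character is not defined modulo $4N/p$, so $\mathrm{Tr}^{4N}_{4N/p}(G)$ is not even defined.

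\textbf{The missing ingredient.} You need the Atkin--Lehner Lemma 7-type statement, Lemma \ref{lemma: level reduction}: if $\chi=\JS{p}{\cdot}\psi$, then $\SS_{\kappa,p,M,\chi}=U_p+p^{\frac\kappa2-1}\psi(p)\JS{4M/p}{p}W_{p,M}$ maps $\mM_\kappa(\Gamma_0(4M),\chi)$ into $\mM_\kappa(\Gamma_0(4M/p),\psi)$. Morally this is the trace of $G|W_p$, which does have trivial character, rewritten using that $W_p^2$ is a scalar. The displayed identity says precisely $F=G|\SS_{\kappa,p,N,\chi}$ with $\chi=\JS{4p}{\cdot}$ and $\psi$ trivial.

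\textbf{How the paper proceeds.} It applies these operators successively to $fg\in\mM_\kappa(\Gamma_0(4N),\chi_{4N})$. By induction it shows $(fg)|\SS_{p_1,N}\cdots\SS_{p_m,\frac{N}{p_1\cdots p_{m-1}}}=\sum_{d\mid p_1\cdots p_m}(f|U_d\cdot g|U_d)|U_{\frac{p_1\cdots p_m}{d}}$, using Lemma \ref{lemma: two W} at the intermediate levels.

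\textbf{How your plan could be repaired.} Once you have the displayed identity, your per-prime idea works and is slightly more economical. A single application of Lemma \ref{lemma: level reduction} at level $4N$ gives $F\in\mM_\kappa(\Gamma_0(4N/p))$ for each $p\mid N$. Since the groups $\Gamma_0(4N/p)$, $p\mid N$, generate $\Gamma_0(4)$, you need neither the induction nor Lemma \ref{lemma: two W}. As submitted, however, both the correct identity and the level-lowering lemma it feeds into are missing.
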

\begin{remark}
	The key point is that the genus condition forces the local Atkin\textendash Lehner contribution at every prime $p\mid N$ to cancel, allowing the level to drop from $4N$ to $4$.
\end{remark}

We now introduce the operators needed for the level-lowering process. Throughout this section, we fix an odd and squarefree $N\in\N$. In the process of proving Theorem \ref{theorem: level reduction} we encounter spaces of modular forms on $\Gamma_0(4M)$, $M|N$. We adopt the following notation to distinguish the $W_p$-operators on modular forms of different levels.

\begin{Definition*} 
Suppose that $p\mid N$ and $p^2\nmid N$, and let $r$, $s\in\Z$  satisfy $sp-\frac{4rN}{p}=1$. For a positive divisor $M|N$ with $p|M$, we let 
  $$
 	 W_{k,p,M}:=\left(\M p{\frac{rN}{M}}{4M}{sp},\phi_{k,p,M}\right)\in \mathfrak{G}_{k},
  $$
where 
$\phi_{k,p,M}(\tau):=\varepsilon_p^{-2k}p^{-\frac{k}{2}}(4M\tau+sp)^k.$ As usual, we omit the dependence on $k$ if it is clear from context. If $M=N$, then we write $W_p$ as above.
\end{Definition*}
The following lemma compares the level $M$ and level $N$ versions of the $W_p$-operators.
\begin{Lemma} \label[Lemma]{lemma: two W}
  Let $p$ be a prime divisor of $N$ with $p^2\nmid N$ and $M|N$ such that $p|M$. Let $\chi$ be a quadratic Dirichlet character $\hspace{-0.15cm}\Pmod{4M}$ and let $\chi=\chi_1\chi_2$ be the corresponding decomposition in Dirichlet characters $\chi_1$ and $\chi_2$ of modulus $\frac{4M}{p}$ and $p$, respectively. Then,  for $f\in\mM_{k+\frac{1}{2}}(\Gamma_0(4M),\chi)$, 
  $$
  f|W_{p,M}=\chi_2\left(\frac{N}{M}\right)f|W_{p}.
  $$
\end{Lemma}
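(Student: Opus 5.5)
The plan is to compare the two group-algebra elements directly. I will write $W_{p,M}$ as $W_p$ composed with an element of $\Gamma_0(4M)$, and then read off the resulting multiplier from the transformation law \eqref{eqn:slashconvertnew} of $f$. Put
\[
A:=\M p{\frac{rN}{M}}{4M}{sp},\qquad B:=\M pr{4N}{ps},
\]
where the same $r,s$ with $sp-\frac{4rN}{p}=1$ are used in both matrices, so that $B$ has determinant $p$ and is of the form in the definition of $W_p$. Both matrices have determinant $p$. The first step is to compute
\[
\gamma:=A B^{-1}=\frac1p\M p{\frac{rN}{M}}{4M}{sp}\M {ps}{-r}{-4N}{p},
\]
and to check that $\gamma$ is integral and lies in $\Gamma_0(4M)$. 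A short computation gives the lower-left entry $\frac{4M(ps)-4Nsp}{p}=4s(M-N)$, which is divisible by $4M$ because $M\mid N$. The lower-right entry is
\[
d_\gamma=\frac{-4Mr+sp^2}{p}=sp-\frac{4Mr}{p}.
\]
Modulo $p$ this gives $d_\gamma\equiv -\frac{4Mr}{p}$, while the defining relation gives $\frac{4Nr}{p}\equiv -1\Pmod p$. Hence $d_\gamma\equiv \frac{M}{N}\cdot(-\frac{4Nr}{p})\equiv \frac{M}{N}\cdot 1 \Pmod p$ after inverting $\frac NM$ modulo $p$. The same computation shows $d_\gamma\equiv 1 \Pmod{\frac{4M}{p}}$, since $sp\equiv 1\Pmod{\frac{4N}{p}}$ and $\frac{4M}{p}\mid\frac{4Mr}{p}\cdot$(appropriate factor). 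I will verify these congruences carefully, as they determine the character value.

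The second step is to lift the matrix identity $A=\gamma B$ to $\mathfrak G_{k+\frac12}$, that is, to show
\[
W_{p,M}=\gamma^*\,W_p
\]
exactly, including the automorphy factors. Since $\phi_{k,p,M}$ and $\phi_{k,p}$ are given by the explicit formulas $\varepsilon_p^{-2\kappa}p^{-\kappa/2}(c\tau+d)^\kappa$, the cocycle identity reduces to checking that $\phi_\gamma(B\tau)\phi_{k,p}(\tau)=\phi_{k,p,M}(\tau)$. The holomorphic parts $(c\tau+d)^\kappa$ agree up to a root of unity coming from the branch choices. The constant factors $\varepsilon_p^{-2\kappa}p^{-\kappa/2}$ are identical on both sides. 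What remains is the theta-multiplier factor $\left(\frac{c_\gamma}{d_\gamma}\right)\varepsilon_{d_\gamma}^{-1}$ of $\phi_\gamma$, which must be matched with the branch discrepancy. I expect this to be the main obstacle: tracking the square-root branches and the Kronecker symbol $\left(\frac{4s(M-N)}{d_\gamma}\right)\varepsilon_{d_\gamma}^{-2\kappa}$ so that they cancel to $1$. To arrange this cancellation, I would first try to choose $r,s$ freely. The remark after the definition of $W_{\kappa,p}$ says the action is independent of $r,s$, so I may pick $s$ with $s\equiv 0$ or with $M-N$ convenient. For example, I can take $s\equiv 1 \Pmod{4}$, which gives $d_\gamma\equiv 1\Pmod 4$ and so $\varepsilon_{d_\gamma}=1$. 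I would then apply quadratic reciprocity to the symbol.

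With $W_{p,M}=\gamma^*W_p$ in hand, the third step is
\[
f|W_{p,M}=(f|\gamma^*)|W_p=\chi(d_\gamma)f|W_p.
\]
Since $d_\gamma\equiv1\Pmod{\frac{4M}p}$, we have $\chi_1(d_\gamma)=1$. Using $d_\gamma\equiv \frac MN\Pmod p$ and the fact that $\chi_2$ is quadratic, $\chi_2(d_\gamma)=\chi_2(\frac NM)$. This gives the stated identity $f|W_{p,M}=\chi_2(\frac NM)f|W_p$.
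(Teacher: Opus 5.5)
Your reduction is the natural one, and the matrix computations are right. With $\gamma=AB^{-1}$ you get $\gamma\in\Gamma_0(4M)$, $d_\gamma\equiv 1\Pmod{\frac{4M}{p}}$ and $d_\gamma\equiv(\frac NM)^{-1}\Pmod p$, so $\chi(d_\gamma)=\chi_2(\frac NM)$. But all of the half-integral-weight content of the lemma sits in the lift $W_{p,M}=\gamma^*W_p$ in $\mathfrak G_{k+\frac12}$, and you leave exactly that step as an expectation. The paper states this lemma without proof, so it has to be supplied.

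Your description of that step is also off. There is no branch discrepancy. Both $4N\tau+ps$ and $4M\tau+sp$ lie in $\H$, so $c_\gamma B\tau+d_\gamma=\frac{4M\tau+sp}{4N\tau+ps}$ has principal argument equal to the difference of their arguments, and the principal square roots multiply exactly. Hence $W_{p,M}=\gamma^*W_p$ holds if and only if $\JS{c_\gamma}{d_\gamma}\varepsilon_{d_\gamma}^{-2k-1}=1$. A value $-1$ could not be absorbed anywhere; it would put an extra sign into the lemma.

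Your plan to force $\varepsilon_{d_\gamma}=1$ by choosing $s$ does not work as written either. The relation $sp=1+\frac{4rN}{p}$ forces $s\equiv p\Pmod 4$, so $s \bmod 4$ is not at your disposal, and $s\equiv 0$ is impossible. Specializing $(r,s)$ would also need an $(r,s)$-independence statement for $W_{p,M}$ at level $4M$, which you do not have. In any case it is unnecessary: $d_\gamma\equiv sp\equiv 1\Pmod 4$ for every admissible $(r,s)$.

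What remains is to prove $\JS{c_\gamma}{d_\gamma}=1$. For $M=N$ we have $\gamma=I$ and there is nothing to do. Otherwise put $L:=\frac NM>1$, which is odd and prime to $p$. Then
\[
c_\gamma=-4\,sp\,\tfrac Mp\,(L-1),\qquad d_\gamma=1+\tfrac{4Mr}{p}(L-1).
\]
From this:
\begin{itemize}
\item $d_\gamma\equiv 1\Pmod 8$, and $d_\gamma\equiv 1$ modulo $\frac Mp$ and modulo $L-1$;
\item $d_\gamma\equiv -\frac{4Mr}{p}\equiv L^{-1}\Pmod{sp}$, because $-\frac{4Mr}{p}L=1-sp$;
\item $sp\equiv 1\Pmod{4L}$.
\end{itemize}

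If $r\ge 0$, then $d_\gamma,sp>0$. Split $c_\gamma$ into its factors and apply reciprocity; every supplementary symbol is $1$ because $d_\gamma\equiv1\Pmod 8$. This gives $\JS{c_\gamma}{d_\gamma}=\JS{d_\gamma}{sp}=\JS{L}{sp}=\JS{sp}{L}=1$.

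If $r<0$, then $d_\gamma,sp<0<c_\gamma$, so $\JS{c_\gamma}{d_\gamma}=\JS{c_\gamma}{|d_\gamma|}$. The same computation, now with $|d_\gamma|\equiv 7\Pmod 8$, yields $\JS{L}{|sp|}$. This equals $1$ by reciprocity, since $|sp|\equiv -1\Pmod{4L}$.

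With this verification, $W_{p,M}=\gamma^*W_p$ holds exactly in $\mathfrak G_{k+\frac12}$, and your third step completes the proof.
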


We next use $W_{p,M}$ to construct a level-lowering operator. Let $M\mid N$ and $p\mid M$ be a prime. Suppose that $\chi$ and $\psi$ are characters $\Pmod{4M}$ and $\Pmod{\frac{4M}{p}}$, respectively, such that for every $\gcd(d,4M)=1$, we have
\begin{equation}\label{eqn:chipsirel}
\chi(d)=
\begin{cases}
\left(\frac{p}{d}\right)\psi(d)&\text{if }\kappa\in \Z+\frac{1}{2},\\
\psi(d)&\text{if }\kappa\in \Z.
\end{cases}
\end{equation}
Then we let $\SS_{\kappa,p,M,\chi}$ denote the operator
  $$
  f|\SS_{\kappa,p,M,\chi}:=
\begin{cases}
f|U_p+p^{\frac{\kappa}{2}-1}\psi(p)\JS{\frac{4M}{p}}{p}
f|W_{p,M}&\text{if }\kappa\in\Z+\frac{1}{2},\\[1em]
f|U_p+\psi(p)\varepsilon_p^{2\kappa} p^{\frac{\kappa}{2}-1}f|W_{p,M}&\text{if }\kappa\in\Z.
\end{cases}
  $$
 The operator is chosen so that the $W_p$-term cancels the unwanted local eigenspace. For brevity, we omit the dependence on $\kappa$ if it is clear from the context. One obtains the following modular properties of $f|\SS_{\kappa,p,M,\chi}$ by an argument that closely resembles the proof of \cite[Lemma 7]{Atkin-Lehner}.
\begin{Lemma} \label[Lemma]{lemma: level reduction}  
Let $M| N$ such that $p|M$, $\kappa\in\frac{1}{2}\Z$, and $\chi\Pmod{4M}$ and $\psi\Pmod{\frac{4M}{p}}$ be characters satisfying \eqref{eqn:chipsirel}. Then,  for $f\in \mM_{\kappa}(\Gamma_{0}(4M),\chi)$,  we have
\[
f|\SS_{\kappa,p,M,\chi}\in \mM_{\kappa}\left(\Gamma_0\left(\frac{4M}{p}\right),\psi\right).
\]
\end{Lemma}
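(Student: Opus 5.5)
The plan is the classical Atkin--Lehner argument (as in \cite[Lemma 7]{Atkin-Lehner}): write $\Gamma_0(4M/p)$ as a union of right cosets of $\Gamma_0(4M)$. Then show that $U_p$ and $W_{p,M}$ together realize the trace from level $4M$ to level $4M/p$, up to the normalizing constants appearing in $\SS_{\kappa,p,M,\chi}$.

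Since $p\nmid 4M/p$, the index $[\Gamma_0(4M/p):\Gamma_0(4M)]$ equals $p+1$. I will choose coset representatives consisting of the identity together with $p$ matrices $\gamma_j$, $0\le j\le p-1$, each of which, when composed appropriately, equals the Atkin--Lehner-type matrix $\M p{\frac{rN}{M}}{4M}{sp}$ times $\M 1j0p$, up to left multiplication by $\Gamma_0(4M)$ and by a scalar. Concretely, I will use the factorization
\[
\M p{\frac{rN}{M}}{4M}{sp}\M 1j0p=p\,\gamma_j'
\]
with $\gamma_j'\in\Gamma_0(4M/p)$, and check that the $\gamma_j'$ give distinct cosets different from $\Gamma_0(4M)$. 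This reduces the argument to matrix bookkeeping modulo $p$, using $sp-4rN/p=1$.

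Next I will define the twisted trace
\[
\mathrm{Tr}(f):=\sum_{\gamma\in\Gamma_0(4M)\backslash\Gamma_0(4M/p)}\overline{\psi(d_\gamma)}\,f|_\kappa\gamma .
\]
Relation \eqref{eqn:chipsirel} makes each summand independent of the choice of representative, because $\chi$ and $\psi$ agree on $\Gamma_0(4M)$ up to $(\frac{p}{d})$. In half-integral weight, this factor $(\frac{p}{d})$ is exactly what is absorbed by the change of theta multiplier $\phi_\gamma$. It follows that $\mathrm{Tr}(f)$ transforms with character $\psi$ on $\Gamma_0(4M/p)$. Using the factorization above, the $p$ nontrivial terms become $f|W_{p,M}$ composed with $\M 1j0p$, summed over $j$. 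Since $W_{p,M}$ is essentially an involution by Lemma \ref{lemma: Atkin-Lehner} (1) and Lemma \ref{lemma:W^2 2} (1), these $p$ terms combine, after writing $f=(f|W_{p,M})|W_{p,M}^{-1}$, into a multiple of $f|W_{p,M}\,U_p$. The overall identity will read
\[
\mathrm{Tr}(f)=f+c\,(f|W_{p,M})|U_p
\]
for an explicit constant $c$. Applying the same identity to $f|W_{p,M}$ in place of $f$, and using $W_{p,M}^2=$ scalar, gives $f|U_p+c'f|W_{p,M}$. This is exactly $f|\SS_{\kappa,p,M,\chi}$ once $c'=p^{\frac{\kappa}{2}-1}\psi(p)(\frac{4M/p}{p})$ in half-integral weight, respectively $\psi(p)\varepsilon_p^{2\kappa}p^{\frac\kappa2-1}$ in integral weight. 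Since $f|W_{p,M}$ is again modular on $\Gamma_0(4M)$ with a conjugated character, its trace is modular on $\Gamma_0(4M/p)$ with character $\psi$.

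The growth conditions at the cusps are preserved because $U_p$ and $W_{p,M}$ are finite combinations of $\mathfrak G_\kappa$-actions by rational matrices. The same argument applies verbatim to real-analytic forms, so the result holds in $\mM_\kappa$.

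The main obstacle is the constant $c'$ in half-integral weight. One must track the automorphy factors $\phi_\gamma$ from \eqref{eqn:automorphy}, namely the $\varepsilon_d$ and $(\frac cd)$ factors, through the factorization, together with $\phi_{\kappa,p,M}$ with its $\varepsilon_p^{-2\kappa}$. These must combine to produce $\psi(p)(\frac{4M/p}{p})$, for instance via quadratic reciprocity and Lemma \ref{lemma: two W}. I would first do this computation for $\kappa\in\Z$, where only $\psi(p)$ and the $\varepsilon_p^{2\kappa}$ normalization appear. I would then reduce the half-integral case to it by comparing $\phi_\gamma$ for a single explicit $\gamma_j'$, computing the Kronecker symbols $(\frac{c}{d})$ with $d\equiv sp\Pmod{4M}$.
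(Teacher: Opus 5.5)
Your architecture is the one the paper intends: it cites the trace argument behind Atkin--Lehner's Lemma~7 and gives no further detail. Your factorization $W_{p,M}\M1j0p=p\gamma_j'$ with $\gamma_j'=\M{1}{j+\frac{rN}{M}}{\frac{4M}{p}}{\frac{4Mj}{p}+sp}\in\Gamma_0(\frac{4M}{p})$ is correct and produces the $p$ nontrivial cosets. In integral weight, where $\chi=\psi$, your argument goes through.

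In half-integral weight, however, your well-definedness step fails, and this is the case actually used in \eqref{eqn:LowerRepeat} and Theorem~\ref{theorem: level reduction}. The factor $\phi_\gamma$ of \eqref{eqn:automorphy} is one and the same cocycle on $\Gamma_0(4M)$ and on $\Gamma_0(\frac{4M}{p})$, so nothing absorbs $(\frac pd)$ there. For $\gamma_0\in\Gamma_0(4M)$ you get $\overline{\psi(d_{\gamma_0\gamma})}\,f|\gamma_0\gamma=\big(\frac{p}{d_{\gamma_0}}\big)\overline{\psi(d_\gamma)}\,f|\gamma$. Hence $\mathrm{Tr}(f)$ depends on the representatives and is not $\psi$-modular. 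It cannot be: $f$ has character $\chi$ on $\Gamma_0(4M)$, while $f|\SS_{\kappa,p,M,\chi}$ must have character $\psi=\chi\cdot(\frac{p}{\cdot})$ there.

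The twist by $(\frac p\cdot)$ is produced by the non-integral matrices. The form $f|U_p$ has character $\chi(\frac{4p}{\cdot})=\psi$ by Shimura, and likewise one needs $f|W_{p,M}\in\mM_\kappa(\Gamma_0(4M),\psi)$. Conjugating $\SM abcd\in\Gamma_0(4M)$ by $W_{p,M}$ gives a matrix whose lower-right entry is $\equiv d\pmod{\frac{4M}{p}}$ and $\equiv d^{-1}\pmod p$. So the Nebentypus part returns only $\chi(d)$, and the missing $(\frac pd)$ must come from comparing theta multipliers under this conjugation. That comparison is exactly where \eqref{eqn:chipsirel} enters. Your phrase ``modular on $\Gamma_0(4M)$ with a conjugated character'' skips it, and your own mechanism would assign $f|W_{p,M}$ the wrong character.

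The repair is never to form $\mathrm{Tr}(f)$. Treat your factorization as a formal identity valid for every function $h$: $W_{p,M}^*\,(\M1j0p,p^{\frac\kappa2})=(pI,t)\,(\gamma_j')^*$, hence $\sum_j h|\gamma_j'=t\,p^{1-\frac\kappa2}\,h|W_{p,M}U_p$. Then prove $f|W_{p,M}\in\mM_\kappa(\Gamma_0(4M),\psi)$ and trace only $h=f|W_{p,M}$.

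The constant you flag as the main obstacle is then routine:
\begin{itemize}
\item Take $s>0$. Since $sp-\frac{4rN}{p}=1$, we get $d_j=\frac{4Mj}{p}+sp\equiv 1\pmod{\frac{4M}{p}}$; the relevant congruence is modulo $\frac{4M}{p}$, not $4M$.
\item Hence $\psi(d_j)=1$, $(\frac{4M/p}{d_j})=1$, $\varepsilon_{d_j}=1$, and $t=\varepsilon_p^{-2\kappa}$.
\item With $\kappa=k+\frac12$, $\chi_2=(\frac{\cdot}{p})$ and $\chi_1=\psi\chi_{-4}^{\frac{p-1}{2}}$, Lemma~\ref{lemma: Atkin-Lehner}~(1) at level $4M$ gives $f|W_{p,M}^2=\varepsilon_p^{2k+1}(\frac{M/p}{p})\overline{\psi(p)}f$.
\item Therefore $\mathrm{Tr}(f|W_{p,M})=f|W_{p,M}+p^{1-\frac\kappa2}(\frac{M/p}{p})\overline{\psi(p)}\,f|U_p$.
\end{itemize}
This is a nonzero multiple of $f|\SS_{\kappa,p,M,\chi}$, since $(\frac{4M/p}{p})=(\frac{M/p}{p})$.
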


We next use Lemma \ref{lemma: level reduction} with $\kappa=k+\frac{1}{2}\in\Z+\frac{1}{2}$, $\chi(n)=(\frac{4M}{n})$, and $\psi(n)=(\frac{\frac{4M}{p}}{n})$. We hence first verify that, for $\gcd(d,4M)=1$,
\[
\left(\frac{p}{d}\right)\chi(d)=\left(\frac{p}{d}\right)\left(\frac{4M}{d}\right)=\left(\frac{\frac{4M}{p}}{d}\right)=\psi(d).
\]
Since $\psi(d)$ only depends on $d\Pmod{\frac{4M}{p}}$, we see that $\chi$ and $\psi$ satisfy \eqref{eqn:chipsirel} for $\kappa\in\Z+\frac{1}{2}$ and we may hence apply Lemma \ref{lemma: level reduction}. By induction, Lemma \ref{lemma: level reduction} then shows that if $N=p_1\ldots p_\ell$ is the prime decomposition of $N$ and $f\!\in \mM_{k+\frac{1}{2}}\!\left(\Gamma_0(4N),\chi_{4N}\right)$, then, abbreviating $\SS_{p,M}=\SS_{k+\frac{1}{2},p,M,\chi_{4M}}$,
\begin{equation}\label{eqn:LowerRepeat}
f|\SS_{p_1,N} \SS_{p_2,\frac{N}{p_1}} \SS_{p_3,\frac{N}{p_1p_2}}\ldots
\SS_{p_\ell,p_\ell}\in\mM_{k+\frac{1}{2}}(\Gamma_{0}(4)).
\end{equation}
\begin{proof}[Proof of Theorem \ref{theorem: level reduction}]

 Let $N=p_1\ldots p_\ell$ be the prime factorization of $N$. If $f\in \mM_{k_1+\frac{1}{2}}(\Gamma_0(4N))$ and
  $g\in \mM_{k_2}(\Gamma_0(4N),\chi_{(-1)^{k_2}4N})$, then
  $fg\in \mM_{k_1+k_2+\frac{1}{2}}(\Gamma_{0}(4N),\chi_{4N})$. Thus, by repeated use of Lemma \ref{lemma: level reduction} (see \eqref{eqn:LowerRepeat}),
  \begin{equation} \label{eq: equivalent 2}
  (fg)|\SS_{p_1,N} \SS_{p_2,\frac{N}{p_1}} \SS_{p_3,\frac{N}{p_1p_2}}\ldots \SS_{p_n,p_n}\in \mM_{k_1+k_2+\frac{1}{2}}(\Gamma_{0}(4)).
  \end{equation}
  We prove by induction on $m$ that
  \begin{equation} \label{eq: induction}
  (fg)|\SS_{p_1,N}\ldots \SS_{p_m,\frac{N}{p_1}\ldots p_{m-1}}
  =\sum_{d|p_1\ldots p_m}
  \left((f|U_d)\cdot(g|U_d)\right)|U_{\frac{p_1\ldots p_m}{d}}.
  \end{equation}
Combining \eqref{eq: induction} with \eqref{eq: equivalent 2} then yields the claim. For  $m=1$,  \eqref{eq: induction} becomes
  $$
  (fg)|\SS_{p_1,N}=(fg)|U_{p_1}
  +p_1^{\frac{k_1}{2}-\frac{1}{4}}f|W_{k_1+\frac{1}{2},p_1,N}\cdot
  p_1^{\frac{k_2}{2}-\frac{1}{2}}g|W_{k_2,p_1,N}.
  $$
  By Lemma \ref{lemma: UW on g}, this equals
  $$
  (fg)|U_{p_1}+f|U_{p_1}\cdot g|U_{p_1},
  $$
  which is exactly what \eqref{eq: induction} claims for $m=1$. Now assume that \eqref{eq: induction} holds up to some $m<\ell$ and set  $M=p_1\ldots p_m$. Since $(fg)|\SS_{p_1,N}\ldots
  \SS_{p_m,\frac{N}{p_1\ldots p_m}}\in
  \mM_{k_1+k_2+\frac{1}{2}}(\Gamma_0(\frac{4N}{M}),\chi_{\frac{4N}{M}})$ by Lemma \ref{lemma: level reduction}, the induction hypothesis and Lemma \ref{lemma: two W} imply that
  \begin{align}
      &\hspace{-.5cm}(fg)|\SS_{p_1,N}\ldots \SS_{p_{m+1},\frac{N}{p_1\ldots p_{m+1}}}\label{eq: fS}\\
      &=\sum_{d|M}\left(f|U_d\cdot g|U_d\right)|U_{\frac{M}{d}} U_{p_{m+1}}+p_{m+1}^{\frac{k_1}{2}+\frac{k_2}{2}-\frac{3}{4}}\JS M{p_{m+1}}
        \sum_{d|M}\left(f|U_d g|U_d\right)|
      U_{\frac{M}{d}} W_{p_{m+1},N}.\nonumber
  \end{align}
Applying Lemma \ref{lemma: Atkin-Lehner} (2) and then Lemma \ref{lemma: UW on g}, we may rewrite the second sum in \eqref{eq: fS} as
\begin{equation*}
	p_{m+1}^{\frac{k_1}{2}+\frac{k_2}{2}-\frac{3}{4}}\sum_{d|M}\JS d{p_{m+1}}
	\!\! \left(f|U_d W_{k_1+\frac{1}{2},p_{m+1},N}\cdot
	g|U_d W_{k_2,p_{m+1},N}\right)\!\!\Big|U_{\frac{M}{d}}\!=\!\!
	\sum_{d|M}\left(f|U_{dp_{m+1}}\cdot
	g|U_{dp_{m+1}}\right)\!\!\Big|U_{\frac{M}{d}}.
\end{equation*}
Plugging this into \eqref{eq: fS}, we conclude that \eqref{eq: induction} also holds for $m+1$. This proves \eqref{eq: induction} by induction and the proof of the theorem is complete.
\end{proof}

\section{Proofs of Theorem \ref{thm:D>1} and Theorem \ref{theorem: D>1}}\label{sec:MainD>1}
\begin{proof}[Proof of Theorem \ref{theorem: D>1}]
By \cite[Proposition 2.1]{Shimura-correspondence}, for $Q\in\mathcal{Q}_{-\delta}$, we have $\Theta_Q\in \sM_1(\Gamma_0(\delta), \chi_{-\delta})$, so
  $\Theta_Q|V_{4}\in\sM_1(\Gamma_0(4\delta),\chi_{-4\delta})$. For brevity, write $\fg=\fg_{-\delta,D,N}$ (see \eqref{eqn:fgdef}). For $Q\in\fg$, we have $\Theta_Q|V_{4}\in\sM_1^{\fg,+}(\Gamma_0(4\delta),\chi_{-4\delta})$. On the other hand, by Proposition \ref{proposition: genus of HDN}, $\mathcal H_{D,N}(\tau)=\sum_{n\geq0} H_{D,N}(n)q^n\in\sM_{\frac{3}{2}}^{\fg,-}(\Gamma_0(4\delta))$. Thus, by Theorem \ref{theorem: level reduction}, we have
  \begin{equation} \label{eq: in M5/2}
  \sum_{d|\delta}\big(\mathcal H_{D,N}|U_d
  \cdot\Theta_Q|V_{4} U\big)|U_{\frac{\delta}{d}}
  \in\sM_{\frac{5}{2}}\left(\Gamma_0(4)\right).
  \end{equation}
 Since $\mathcal H_{D,N}$ is a modular form whose $n$-th Fourier coefficient vanishes if $n\equiv 1,2\Pmod 4$, while $\delta \equiv 3\Pmod4$, the $n$-th Fourier coefficient of  \eqref{eq: in M5/2} can only be nonzero if $n\equiv 0,1\Pmod 4$. Thus \eqref{eq: in M5/2} belongs to Kohnen's $+$-space of weight $\frac{5}{2}$ on $\Gamma_0(4)$ and hence must be a multiple of $\mathcal H_{\frac{5}{2}}$.
 Comparing the constant terms, we conclude that
  $$
  \sum_{d|\delta}\big(\mathcal H_{D,N}|U_d
  \cdot\Theta_Q|V_{4} U_d\big)|U_{\frac{\delta}{d}}
  =120\cdot 2^{\omega(\delta)}H_{D,N}(0)\mathcal H_{\frac{5}{2}}.
  $$
  This completes the proof of Theorem \ref{theorem: D>1}.
\end{proof}
We next write the modular identity in Theorem \ref{theorem: D>1} in terms of classical Hurwitz class number generating functions. 
\begin{prop}\label[Proposition]{prop:Hrewrite}
Let $\delta\in\N$ be squarefree with $\delta\equiv 3\Pmod 4$ and suppose that $(D,N)\in\mathscr{S}_{\delta}$.  Then for $Q\in\fg_{-\delta,D,N}$, $M\in\N$, and $m\in\Z$, we have
  \begin{equation*} 
\sum_{d|\delta}\big( \mathcal H_{D,N}|U_d \cdot \Theta_{Q,M,m}|  V_{4} U_d\big)|U_{\frac{\delta}{d}} =  2^{\omega(\delta)} \sum_{d|\delta}(-1)^{\omega(\gcd(D,d))}d\cdot \left(\mathcal H|V_{d^2}\cdot \Theta_{Q,M,m}|V_4\right)|U_{\delta}.
  \end{equation*}
\end{prop}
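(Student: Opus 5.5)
The plan is to compare Fourier coefficients directly. Write $a(x)$ for the coefficients of $\mathcal H_{D,N}$ and $b(y)$ for those of $\Theta_{Q,M,m}|V_4$, so that $b(y)=0$ unless $4\mid y$, $\frac y4\equiv m\Pmod M$, and $\frac y4=Q(\bm m)$ for some $\bm m\in\Z^2$.

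\textbf{Step 1: the left-hand side.} For $d\mid\delta$, the $n$-th coefficient of $(A|U_d\cdot B|U_d)|U_{\frac\delta d}$ is
\[
\sum_{\substack{x+y=\delta n\\ d\mid x,\ d\mid y}} a(x)b(y).
\]
Since $x+y=\delta n$ and $d\mid\delta$, the condition $d\mid x$ is equivalent to $d\mid y$. Summing over $d\mid\delta$, the $n$-th coefficient of the left-hand side is therefore
\[
\sum_{x+y=\delta n}2^{\omega(\gcd(y,\delta))}a(x)b(y).
\]

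\textbf{Step 2: the right-hand side.} The right-hand side has $n$-th coefficient
\[
2^{\omega(\delta)}\sum_{d\mid\delta}(-1)^{\omega(\gcd(D,d))}\,d\sum_{x+y=\delta n}H\!\left(\frac{x}{d^2}\right)b(y).
\]
I insert Lemma~\ref{lemma: alternative HDN} into Step 1, i.e.
\[
a(x)=(-1)^{\omega(D)}\sum_{r\mid\delta}r\,b_{D,N,r}\!\left(\frac{x}{r^2}\right)H\!\left(\frac{x}{r^2}\right).
\]
Matching the terms indexed by $r=d$, it suffices to show that for every pair $x+y=\delta n$ with $b(y)\neq0$ and $H(\frac{x}{r^2})\ne0$,
\[
2^{\omega(\gcd(y,\delta))}(-1)^{\omega(D)}\,b_{D,N,r}\!\left(\frac{x}{r^2}\right)=2^{\omega(\delta)}(-1)^{\omega(\gcd(D,r))}.
\]
Both sides factor as products over primes $p\mid\delta$, so it is enough to check the identity one prime at a time. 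The signs $(-1)^{\omega(D)}$ and $(-1)^{\omega(\gcd(D,r))}$ are distributed so that each $p\mid D$ contributes a factor $-1$ on the left.

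\textbf{Step 3: the local check.} There are three cases.
\begin{itemize}
\item If $p\mid r$, then $p\mid x$, hence $p\mid y$. The local factor on the left is $2\cdot(-1)^{[p\mid D]}$, which matches the right.
\item If $p\nmid r$ but $p\mid y$, then $p\mid \frac{x}{r^2}$, so the Legendre symbol in $b_{D,N,r}$ vanishes. The local factor becomes $2\cdot(-1)^{[p\mid D]}\cdot(0\mp1)=2$, as required.
\item If $p\nmid y$, this is the main point, where the genus enters. Since $\frac y4=Q(\bm m)$ is coprime to $p$, Lemma~\ref{lem:genuscharnotrelprime} together with \eqref{eqn:fgdef} gives $\JnoS{y}{p}=\chi_{p^*}(Q)$, which is $-1$ if $p\mid D$ and $+1$ if $p\mid N$. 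Moreover $-x\equiv y\Pmod p$, so $\JnoS{-x/r^2}{p}=\JnoS{y}{p}$. The local factor of $b_{D,N,r}$ is then $(-1-1)$ for $p\mid D$, which multiplied by the sign $-1$ gives $2$, and $(1+1)=2$ for $p\mid N$. Both agree with the right side.
\end{itemize}

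The sieving by $S_{M,m}$ only restricts which $y$ occur and does not affect the argument. The constant term ($x=0$) should be checked separately against $H_{D,N}(0)$ and $H(0)$, or handled by the same convention for the Legendre symbol at $0$. The main obstacle I expect is the bookkeeping in this third case, namely keeping the sign conventions of $b_{D,N,r}$ and $(-1)^{\omega(D)}$ consistent with the genus character values.
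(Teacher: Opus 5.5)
Your proposal is correct and is essentially the paper's proof: the paper likewise gets the multiplicity $2^{\omega(\gcd(\delta,x))}$ (via $f|U_d=f|U_dV_dU_d$), inserts Lemma~\ref{lemma: alternative HDN}, and uses Lemma~\ref{lem:genuscharnotrelprime} to show that the terms where some $p\nmid r$ has the ``wrong'' value of $\JnoS{-x/r^2}{p}$ are annihilated after multiplying by $\Theta_{Q,M,m}|V_4$ and applying $U_\delta$; this is your third case, organized globally rather than prime by prime. On the constant term you left open, a direct check shows the $x=0$ contributions on the two sides differ by the factor $(-1)^{\omega(D)}$ (equivalently, Lemma~\ref{lemma: alternative HDN} at $n=0$ agrees with \eqref{eqn:HDNcoeff} only up to this sign), and this is harmless because \eqref{eq: character relation} forces $\omega(D)$ to be even whenever $\fg_{-\delta,D,N}$ is nonempty, so the $x=0$ term is then covered by your first two cases.
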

\begin{proof}
Using the identity $f|U_d=f|U_d V_d U_d$, one can rewrite the left-hand side of the claim as
  \begin{equation*}
      \sum_{d|\delta}\big(\mathcal H_{D,N}|U_d V_d \cdot\Theta_{Q,M,m}| V_{4}\big)|U_{\delta}.
  \end{equation*}
  We isolate the class-number factor by setting $F:=\sum_{d|\delta}\mathcal H_{D,N}|U_d V_d$.   By \Cref{lemma: alternative HDN}, we have 
    \begin{align}
\nonumber       F(\tau)&=(-1)^{\omega(D)}\sum_{d|\delta}\sum_{r|\delta}r\sum_{\substack{n\geq0\\d|r^2n}}b_{D,N,r}(n)H(n)q^{r^2n}
      =(-1)^{\omega(D)}\sum_{r|\delta}r\sum_{n\geq0}
      b_{D,N,r}(n)H(n)\!\!\!\sum_{d|\gcd(\delta,r^2n)}\!\!\!q^{r^2n} \\
      &=(-1)^{\omega(D)}\sum_{r|\delta}r\sum_{n\geq0} 2^{\omega(\gcd(\delta,rn))}b_{D,N,r}(n)
      H(n)q^{r^2n}.\label{eqn:Fsum}
    \end{align}
  Note that if $n\in\N_{0}$ such that $\JnoS{-n}p=1$ for some $p|D$ or $\JnoS{-n}p=-1$ for some $p|N$, then for any $p\nmid r$ we have $b_{D,N,r}(n)=0$. For  $n\in\N_0$ for which there does not exist $p$ with $\JnoS{-n}p=1$ for some $p|\frac{D}{\gcd(D,r)}$ or $\JnoS{-n}p=-1$ for some $p|\frac{N}{\gcd(N,r)}$, we have
  \begin{equation*}
      b_{D,N,r}(n)
      =(-1)^{\omega\left(\gcd\left(D,\frac{\delta}{r}\right)\right)}2^{\omega\left(\frac{\delta}{r}\right)-\omega\left(\gcd\left(\frac{\delta}{r},n\right)\right)} =(-1)^{\omega(D)+\omega\left(\gcd(D,r)\right)}2^{\omega(\delta)-\omega\left(\gcd(\delta,rn)\right)}.
  \end{equation*}
  Therefore, plugging this into \eqref{eqn:Fsum} yields
  $$
  F(\tau)=2^{\omega(\delta)}\sum_{r|\delta}(-1)^{\omega(\gcd(D,r))}r\sum_{n\geq 0}  \deltabeta_{D,N,r}(n)H(n)q^{r^2n},
  $$
  where 
  $$
  \deltabeta_{D,N,r}(n)=\begin{cases}
    1 &\text{if }\JS{-n}p\neq 1\text{ for all }p\left|\frac{D}{\gcd(D,r)}\text{ and }
    \JS{-n}p\neq-1\text{ for all }p\right|\frac{N}{\gcd(N,r)}, \\
    0 &\text{otherwise}. \end{cases}
  $$
Let
  $$
  G(\tau)=\sum_{r|\delta}(-1)^{\omega(\gcd(D,r))}r\mathcal H\left(r^2\tau\right) =\sum_{r|\delta}(-1)^{\omega(\gcd(D,r))}r\sum_{n\geq0}H(n)q^{r^2n}.
  $$
  Note that if $n\in\N$ satisfies $\JnoS{-n}p=1$ for some $p|D$ or $\JnoS{-n}p=-1$ for some $p|N$, then Lemma \ref{lem:genuscharnotrelprime} shows that $n+4Q(\bm{n})$ is never a multiple of $\delta$ for any $n_{1},n_{2}\in\Z$. This shows that
  $$
  \left(\left(F-2^{\omega(\delta)} G\right)\Theta_{Q,M,m}|V_{4}\right)
  |U_{\delta}=0.
  $$
Rearranging, we conclude that 
\[
\left(F\cdot \Theta_{Q,M,m}|V_4\right)|U_{\delta}= 2^{\omega(\delta)}\left(G\cdot \Theta_{Q,M,m}|V_4\right)|U_{\delta},
\]
which gives the claim.
\end{proof}
The following corollary immediately implies Theorem \ref{thm:D>1} by comparing Fourier coefficients.
\begin{Corollary} \label{corollary: D>1}
Let $\delta\in\N$ be squarefree with $\delta\equiv 3\Pmod 4$ and suppose that $(D,N)\in\mathscr{S}_{\delta}^{\operatorname{np}}$.  Then, for $Q\in\fg_{-\delta,D,N}$,  we have
  \begin{equation*} 
  \sum_{d|\delta}(-1)^{\omega(\gcd(D,d))}d\cdot \left(\mathcal H|V_{d^2}\cdot
    \Theta_Q|V_4\right)|U_{\delta}
  =120 H_{D,N}(0)\mathcal H_{\frac{5}{2}}.
  \end{equation*}
\end{Corollary}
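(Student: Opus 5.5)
The corollary follows by combining Theorem~\ref{theorem: D>1} with Proposition~\ref{prop:Hrewrite}, taking the trivial sieve. We choose $M=1$ and $m=0$ in Proposition~\ref{prop:Hrewrite}. Then $\Theta_{Q,1,0}=\Theta_Q|S_{1,0}=\Theta_Q$, since every integer is $\equiv 0\pmod 1$. For $(D,N)\in\mathscr{S}_\delta^{\operatorname{np}}\subseteq\mathscr{S}_\delta$ and $Q\in\fg_{-\delta,D,N}$, the proposition gives
\[
\sum_{d|\delta}\big( \mathcal H_{D,N}|U_d \cdot \Theta_{Q}|  V_{4} U_d\big)|U_{\frac{\delta}{d}} =  2^{\omega(\delta)} \sum_{d|\delta}(-1)^{\omega(\gcd(D,d))}d\cdot \left(\mathcal H|V_{d^2}\cdot \Theta_{Q}|V_4\right)|U_{\delta}.
\]

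Since $D>1$, Theorem~\ref{theorem: D>1} applies to the same $Q$. It identifies the left-hand side with $120\cdot 2^{\omega(\delta)}H_{D,N}(0)\mathcal H_{\frac52}$. Equating the two expressions and dividing by the nonzero factor $2^{\omega(\delta)}$ yields exactly the claimed identity.

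There is essentially no obstacle, only hypothesis checking. The condition $D>1$ is needed because Theorem~\ref{theorem: D>1} uses the holomorphic modularity of $\mathcal H_{D,N}$ from Proposition~\ref{proposition: HDN modular}(2). The conditions that $\delta$ be squarefree with $\delta\equiv 3\pmod 4$ and that $Q\in\fg_{-\delta,D,N}$ are shared verbatim by both results. The one point to check is that the sieving operator with modulus $1$ is the identity, so that $\Theta_{Q,1,0}=\Theta_Q$.

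Reading off the $n$-th Fourier coefficients then gives Theorem~\ref{thm:D>1}. The coefficient of $q^n$ in $(\mathcal H|V_{d^2}\cdot\Theta_Q|V_4)|U_\delta$ is $\sum_{\bm m} H\big((\delta n-4Q(\bm m))/d^2\big)$, and on the right-hand side $H(2,n)$ is the $n$-th coefficient of $\mathcal H_{\frac52}$.
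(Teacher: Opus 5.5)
Your proposal is correct and is essentially the paper's own proof. You take Proposition~\ref{prop:Hrewrite} with $M=1$, which gives $\Theta_{Q,1,m}=\Theta_Q$, substitute Theorem~\ref{theorem: D>1} for the left-hand side, and cancel the factor $2^{\omega(\delta)}$; this is exactly the argument the paper gives.
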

\begin{proof}
By Theorem \ref{theorem: D>1}, for $D>1$,  we have 
\[
\sum_{d|\delta}\big(\mathcal H_{D,N}|U_d \cdot\Theta_Q|V_{4} U_d\big)|U_{\frac{\delta}{d}}=2^{\omega(\delta)+3}\cdot15H_{D,N}(0)\mathcal H_{\frac{5}{2}}.
\]
Plugging in Proposition \ref{prop:Hrewrite} with $M=1$, the left-hand side may be rewritten as 
\[
  2^{\omega(\delta)}\sum_{d|\delta}(-1)^{\omega(\gcd(D,d))}d\cdot \left(\mathcal H|V_{d^2}\cdot\Theta_Q|V_4\right)|U_{\delta}.
\]
Cancelling $2^{\omega(\delta)}$ from both sides then yields the claim. 
\end{proof}

\section{Proofs of Theorem \ref{thm:PrincipalGenus} and Theorem \ref{theorem: principal genus}}\label{sec:MainPrincipal}
We now turn to the remaining case $D=1$, which requires a different argument from that used in Section 6. Let $N\in\N$ be odd and squarefree.
 We first establish Theorem \ref{theorem: principal genus}. Its proof requires the following lemma on the sieving operator $S_{8,j}$. 
\begin{Lemma} \label[Lemma]{lemma: level 16}
  Suppose that $f\in\mM_{k+\frac{1}{2}}(\Gamma_{0}(16))$. If $k$ is even and $j\in\{1,5\}$ or if $k$ is odd and $j\in\{3,7\}$, then $f|S_{8,j}\in \mM_{k+\frac{1}{2}}(\Gamma_{0}(16))$.
\end{Lemma}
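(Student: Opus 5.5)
The plan is to write the sieving operator $S_{8,j}$ as a linear combination of twists by characters modulo $8$. Each such twist preserves modularity on $\Gamma_0(16)$, possibly with a nebentypus, and I will check that the nebentypus vanishes on exactly the combinations corresponding to $j\in\{1,5\}$ (resp.\ $\{3,7\}$).

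\textbf{Step 1 (the two relevant twists).} Since $f$ has level $16$ in weight $k+\frac12$, its Fourier coefficients are not supported on any particular residue class a priori. However, the operator $f\mapsto f|S_{4,r}$ preserves $\mM_{k+\frac12}(\Gamma_0(16))$; this is the standard fact that twisting by the additive character $n\mapsto i^{n}$ corresponds to conjugation by $\left(\begin{smallmatrix}1&1/4\\0&1\end{smallmatrix}\right)$, which normalizes $\Gamma_0(16)$. More useful here are the quadratic twists. For a Dirichlet character $\psi$ of conductor $8$ (so $\psi\in\{\chi_8,\chi_{-8}\}$) and $\chi_{-4}$ of conductor $4$, the twist $f\otimes\psi$ lies in $\mM_{k+\frac12}(\Gamma_0(64),\psi^2)=\mM_{k+\frac12}(\Gamma_0(64))$ by Shimura's twisting lemma. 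That alone only yields level $64$, so it is not enough.

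\textbf{Step 2 (Gauss sums on $\Gamma_0(16)$).} The better route uses the decomposition
\[
f|S_{8,j}=\frac18\sum_{\ell\bmod 8}e^{-2\pi i j\ell/8}\,f\!\left(\tau+\tfrac{\ell}{8}\right).
\]
For each $\ell$, the function $f(\tau+\ell/8)$ is $f|\left(\begin{smallmatrix}1&\ell/8\\0&1\end{smallmatrix}\right)$. For $\gamma=\left(\begin{smallmatrix}a&b\\c&d\end{smallmatrix}\right)\in\Gamma_0(16)$, conjugating gives $T_{\ell/8}\gamma T_{-\ell/8}=\gamma'$ with lower-left entry $c$ and $d'=d-c\ell/8$. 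Write $c=16c'$; then $\gamma'\in\Gamma_0(16)$ with $d'\equiv d-2c'\ell\pmod{16}$. The multiplier relation becomes $f|T_{\ell/8}|\gamma=\nu(\gamma,\ell)\,f|T_{\ell/8}$. Here I compute the ratio of theta multipliers $\left(\frac{c}{d}\right)\varepsilon_d^{-2\kappa}$ versus $\left(\frac{c}{d'}\right)\varepsilon_{d'}^{-2\kappa}$, using $f|\gamma'=f$. Since $d'\equiv d\pmod 4$, the $\varepsilon$ factors agree. The Jacobi symbols differ by $\left(\frac{c}{d}\right)\left(\frac{c}{d'}\right)$, which by reciprocity reduces to a character in $d\bmod 8$ raised to the power $c'\ell$, i.e.\ a factor $\chi_8(d)^{c'\ell}$ or similar. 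So $f(\tau+\ell/8)$ transforms with a sign $(\pm1)^{c'\ell}$ depending on $d\bmod 8$. The shift also changes the exponent argument: I will also track the additive phase $e(j\ell/8)$ picked up after conjugation, since $T_{\ell/8}\gamma=\gamma' T_{\ell/8}$ only up to a translation by $\ell(a-d')/8$.

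\textbf{Step 3 (combining and obstacle).} Summing over $\ell$, the translation defect turns $e^{-2\pi i j\ell/8}$ into $e^{-2\pi i a^2 j\ell/8}$-type terms, which means $\gamma$ permutes the sieved pieces $S_{8,j}\mapsto S_{8,d^2j}$ twisted by the sign from Step 2. Since $d^2\equiv1\pmod 8$, each class is stable. The sign from the Jacobi symbol must then be trivial on the class $j$, and this reduces to $\left(\frac{2}{d}\right)^{c'}$ multiplied against a factor coming from $\varepsilon_d^{2k+1}$ and $j\bmod 8$. The triviality holds exactly when $(-1)^k j\equiv1\pmod 4$, giving $j\in\{1,5\}$ for even $k$ and $j\in\{3,7\}$ for odd $k$.

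The main obstacle is getting this sign bookkeeping right, in particular the interaction of $\varepsilon_d^{2k+1}$ with the additive phase. If that becomes messy, the fallback is to note that $\mM_{k+\frac12}(\Gamma_0(16))$ is finite-dimensional with an explicit basis built from theta-series products (e.g.\ $\Theta(\tau)$, $\Theta(4\tau)$, and the $\eta$-quotient $\eta^8(4\tau)/\eta^4(2\tau)$, as used for $\mathcal H_{\frac52}$). One would then check that $S_{8,j}$ maps generators appropriately; in weight $\frac12$, for instance, $\Theta|S_{8,1}=\sum_{n\text{ odd}}q^{n^2}$ is visibly modular on $\Gamma_0(16)$. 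Finally, the conditions at the cusps (boundedness) are immediate, since the sieve is a finite combination of translates of $f$.
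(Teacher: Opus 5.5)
\textbf{There is a genuine gap in Step 2, and it sits exactly where the lemma has content.} For $\gamma=\SM abcd\in\Gamma_0(16)$ with $c=16c'$ and $T_x:=\SM1x01$, the conjugate $T_{\ell/8}\gamma T_{-\ell/8}$ has upper-right entry $b+\frac{\ell(d-a)}{8}-\frac{\ell^2c'}{4}$. This is not an integer unless $4\mid \ell^2c'$. So for $\gamma\notin\Gamma_0(64)$ and odd $\ell$, your $\gamma'$ is not in $\SL_2(\Z)$, and the translate $f(\tau+\frac{\ell}{8})$ does not transform under $\gamma$ by any scalar $\nu(\gamma,\ell)$. What actually happens is $T_{\ell/8}\gamma=\gamma_\ell T_{\lambda/8}$ with $\gamma_\ell\in\Gamma_0(16)$ and $\lambda\not\equiv\ell\pmod 8$, i.e.\ the translates are permuted.

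Both of your supporting claims fail in precisely this case. First, $d'=d-2c'\ell\not\equiv d\pmod 4$ when $c'\ell$ is odd, so the $\varepsilon$-factors do \emph{not} agree. Second, the re-indexing is not the harmless $S_{8,j}\mapsto S_{8,d^2j}$; that is the $\Gamma_0(64)$ computation. The factors you name, $\left(\frac{2}{d}\right)^{c'}$ and $\varepsilon_d^{2k+1}$, are both trivial for $\gamma=\SM10{16}1$ (where $d=1$), and Step 3 leaves no room for a $j$-dependent phase. So your bookkeeping would give invariance for every odd $j$, which is false: for $f=\Theta^3$ ($k=1$) one has $f|S_{8,1}|\SM10{16}1^*=f|S_{8,5}\neq f|S_{8,1}$. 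The criterion $(-1)^kj\equiv 1\pmod 4$ is therefore asserted, not derived.

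\textbf{The missing argument, which is the paper's, is short.} Invariance under $\Gamma_0(64)$ is routine; your Step 3 reasoning is fine there, since $\lambda\equiv\ell$. Since $\Gamma_0(16)=\bigcup_{m=0}^{3}\Gamma_0(64)\SM10{16}1^m$, it suffices to treat $\SM10{16}1$. Here $T_{\ell/8}\SM10{16}1=\gamma_\ell T_{\lambda/8}$, where:
\begin{itemize}
\item $\lambda\equiv \ell\pmod 8$ for even $\ell$, and $\lambda\equiv\ell+2\pmod 8$ for odd $\ell$;
\item $\gamma_\ell$ has lower-right entry $1-2\lambda\equiv 1+2\ell\pmod 4$.
\end{itemize}
Each odd-$\ell$ term therefore acquires $i^{j}$ from the re-indexing and $\varepsilon_{1+2\ell}^{-2k-1}=i^{-2k-1}$ from the multiplier. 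Writing $c:=i^{j-2k-1}\in\{\pm1\}$, the slashed sum equals $\frac{1+c}{2}f|S_{8,j}+\frac{1-c}{2}f|S_{8,j+4}$. Hence invariance holds if and only if $j\equiv 2k+1\pmod 4$.

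\textbf{Your fallback does not close the gap.} $S_{8,j}$ is not multiplicative, so checking it on ring generators proves nothing. Moreover, the lemma concerns the real-analytic space $\mM_{k+\frac12}(\Gamma_0(16))$; the paper applies it to a form built from $\widehat{\mathcal H}_N$. That space has no finite basis to check against.
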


\begin{proof}
  One checks that $f|S_{8,j}|\gamma^\ast=f|S_{8,j}$ for any $\gamma\in\Gamma_0(64)$, so it suffices to show $f|S_{8,j}|\SM10{16}1^\ast=f|S_{8,j}$ under the assumption on $k$ and $j$. Let $\zeta_8:=e^{\frac{2\pi i}{8}}$. We have 
  $$
  f|S_{8,j}|\M10{16}1^\ast (\tau)
  =\frac18(16\tau+1)^{-k-\frac{1}{2}}\sum_{\ell=0}^7\zeta_8^{-j\ell}
  f\left(\M1{\frac{\ell}{8}}01\M10{16}1\tau\right).
  $$
  Let $\alpha_\ell$, $\beta_\ell \in \mathbb{Z}$ ($0\leq \ell\leq 7$) with $\alpha_\ell(1+2\ell)+16\beta_\ell=1$. Set $\lambda:=\alpha_\ell\ell+8\beta_\ell$ and $\gamma_\ell:=\SM{1+2\ell}{-\beta_{\ell}}{16}{\alpha_{\ell}}$ so
  $$
  \M1{\frac{\ell}{8}}01\M10{16}1=\gamma_\ell
  \M1{\frac{\lambda}{8}}01.
  $$
  Then 
  $$
  f|S_{8,j}|\M10{16}1^\ast(\tau)
  =\frac18\sum_{\ell=0}^7\zeta_8^{-j\ell}
  \varepsilon_{1+2\ell}^{-2k-1}
  f|\gamma_\ell^\ast\left(\tau+\frac{\lambda}{8}\right).
  $$
  Now $\varepsilon_{1+2\ell}^{-2k-1}=1$ if $\ell$ is even and it equals $i^{-2k-1}$ if $\ell$ is odd. Also, we check that $\lambda\equiv \ell\Pmod 8 $ if $\ell$ is even and $\lambda =\ell + 2$ if $\ell$ is odd. Thus, we have
  $$
  f|S_{8,j}|\M10{16}1^\ast(\tau)
  =\frac18\sum_{\ell\in\{0,2,4,6\}}\zeta_8^{-j\ell}f\left(\tau+\frac{\ell}{8}\right)
  +\frac18\sum_{\lambda\in\{1,3,5,7\}}\zeta_8^{-j(\lambda-2)}
  i^{-2k-1}f\left(\tau+\frac{\lambda}{8}\right).
  $$
  From this, we see that $f|S_{8,j}|\SM10{16}1^\ast =f|S_{8,j}$ under our assumption about $k$ and $j$.
\end{proof}

We are now ready to prove a weaker version of \Cref{theorem: principal genus} with unspecified constants.
\begin{Theorem} \label{thm:PrincipalGenus-weak}
  Let $N\in\N$ be squarefree with $N\equiv 3\Pmod 4$. For $Q\in \fg_{-N,1,N}$, there exist $c_{Q,1},c_{Q,5}\in\C$ such that 
  $$
   G_{Q,1}=c_{Q,1}2^{\omega(N)}\sigma_1(N)\mathcal H_{\frac{5}{2}}|S_{8,1}, \qquad
  G_{Q,5}=c_{Q,5} 2^{\omega(N)}\sigma_1(N)\mathcal H_{\frac{5}{2}}|S_{8,5}.
  $$
\end{Theorem}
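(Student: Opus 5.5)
The plan is to show that $G_{Q,j}$ lies in a space small enough to be one-dimensional, namely $\sM_{\frac52}(\Gamma_0(16))|S_{8,j}$ restricted to the plus space, and that it is spanned by $\mathcal H_{\frac52}|S_{8,j}$.

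\textbf{Step 1: a weight $1$ form in the right genus subspace.} Since $D=1$, $\mathcal H_N$ is only mock modular. By Proposition \ref{proposition: HDN modular} (1) and Proposition \ref{proposition: genus of HDN} (1), we have $\widehat{\mathcal H}_N\in\sH_{\frac32}^{\fg,-}(\Gamma_0(4N))$ for the principal genus $\fg$. The partner should be $\Theta_{Q,2,i}|V_4$ with $i\in\{0,1\}$. First I will check that sieving $\Theta_Q$ by parity preserves modularity on a level $4N$ type group. The reason is that $\Theta_{Q,2,i}(4\tau)$ is the sieving of $\Theta_Q(4\tau)$ by exponent mod $8$, which is a combination of $\Theta_Q(4\tau+\ell/2)$; these are theta functions of lattice cosets and are modular on $\Gamma_0(16N)$ or, after a twist, on $\Gamma_0(4N)$ with character $\chi_{-4N}$. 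More simply, $\Theta_{Q,2,i}$ is a linear combination of $\Theta_Q(\tau)$ and $\Theta_Q(\tau+\frac12)$. Next, the vanishing of coefficients coprime to $4N$ with the wrong genus characters is inherited from $\Theta_Q$. So $\Theta_{Q,2,i}|V_4$ lies in the $+$ genus subspace for $\fg$, possibly at level $16N$ instead of $4N$.

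\textbf{Step 2: level lowering.} Apply Theorem \ref{theorem: level reduction}, or rather its proof. The proof uses only the operators $W_p$ at odd $p\mid N$ together with Lemmas \ref{lemma: UW on g} and \ref{lemma: level reduction}, so it should go through with $4$ replaced by $16$ as long as the harmonic setting is allowed. The sum $\sum_{d|N}(\widehat{\mathcal H}_N|U_d\cdot \Theta_{Q,2,i}|V_4U_d)|U_{N/d}$ then lies in $\mM_{\frac52}(\Gamma_0(16))$.

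\textbf{Step 3: remove the non-holomorphic part.} The non-holomorphic part of $\widehat{\mathcal H}_N$ is supported on exponents $-n^2$. Here the parity choice in the definition of $G_{Q,j}$ matters. Take, for example, $\delta=N\equiv 3\pmod 8$ with $i=0$ and $j=1$. A product term has exponent $-n^2d+4d\,Q(\bm m)$ with $Q(\bm m)$ even, and after $U_N$ the exponents are $(-n^2+4Q(\bm m))/(N/d)$-type quantities. Modulo $8$, the contribution of $-n^2+4Q$ is $-n^2\equiv 0,4,7\pmod 8$ times units; dividing by $N$ gives residues avoiding $1$. I will verify by this congruence bookkeeping, for each of the four cases, that the sieve $S_{8,j}$ kills every term containing the non-holomorphic part (as well as the $\tau_2^{-1/2}$ constant). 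With that, Lemma \ref{lemma: level 16} ($k=2$ even, $j\in\{1,5\}$) shows $G_{Q,j}\in M_{\frac52}(\Gamma_0(16))$ and $G_{Q,j}=G_{Q,j}|S_{8,j}$.

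\textbf{Step 4: identify the form.} $G_{Q,j}$ is supported on $n\equiv j\pmod 8$, so it is in the plus space. I will then show that $\sM_{\frac52}^+(\Gamma_0(16))|S_{8,j}$ is one-dimensional. Kohnen's isomorphism $\sM^+_{\frac52}(\Gamma_0(16))\cong\sM^+_{\frac52}(\Gamma_0(4))$ should hold after using $U_4$-type considerations, and the latter is spanned by $\mathcal H_{\frac52}$ (there are no weight $4$ cusp forms of level $1$). Failing that, I will check directly by a Sturm bound computation that the space of weight $\frac52$ forms on $\Gamma_0(16)$ supported on $n\equiv j\pmod 8$ is spanned by $\mathcal H_{\frac52}|S_{8,j}$. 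Lemma \ref{lem:HSn8-notcusp} guarantees this generator is nonzero, which gives the constant $c_{Q,j}$; the normalization $2^{\omega(N)}\sigma_1(N)$ is absorbed into it.

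The main obstacle is Step 3, together with verifying that Theorem \ref{theorem: level reduction} applies to the sieved theta function with a harmonic $f$ at level $16N$. The parity and residue bookkeeping must show that the sieve removes exactly the non-holomorphic contributions in each case $\delta\equiv 3,7\pmod 8$.
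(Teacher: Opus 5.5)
Your proposal is correct and is essentially the paper's proof. You place $\widehat{\mathcal H}_N$ and the parity-sieved theta function in opposite genus subspaces. You then lower the level at the odd primes down to $\Gamma_0(16)$; your remark that $\Theta_{Q,2,i}|V_4$ really lives at level $16N$, so the level lowering must be run with $16$ in place of $4$, is exactly what the paper tacitly uses. Next you apply Lemma~\ref{lemma: level 16} and check that $S_{8,j}$ kills the non-holomorphic part; your residue bookkeeping works in all four cases. Finally you conclude from the fact, which the paper verifies by a Magma computation, that the forms in $\sM_{\frac52}(\Gamma_0(16))$ supported on $n\equiv j\pmod 8$ form a one-dimensional space.

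Drop the suggested Kohnen-isomorphism shortcut in Step 4, though. The plus space on $\Gamma_0(16)$ is not isomorphic to the one on $\Gamma_0(4)$: it already contains the independent forms $\mathcal H_{\frac52}$, $\mathcal H_{\frac52}|V_4$ and $\mathcal H_{\frac52}|S_{8,1}$. Your fallback direct computation is therefore the argument you actually need.
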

\begin{proof}
  We only prove the claim for $G_{Q,1}$ with $N\equiv 3\Pmod 8$; the remaining cases are shown similarly. By Proposition \ref{proposition: genus of HDN}, $\widehat{\mathcal H}_{N}\in\sH_{\frac{3}{2}}^{\varepsilon_{1,N},-}(4N)$. Since $\varepsilon_{1,N}(p)=1$ for $p\mid \delta_{N}$, $\widehat{\mathcal H}_{N}\in\sH_{\frac{3}{2}}^{\fg,-}(4N)$, where $\fg=\fg_{-\delta_N,1,N}$ is the principal genus of binary quadratic forms discriminant $-\delta_N$. Note that $\Theta_{Q,2,0}|V_4\in\sM_1(\Gamma_0(4N),\chi_{-4N})$ and Lemma \ref{lem:genuscharnotrelprime} implies that if $\JnoS np=-1$ for some prime $p\mid N$, the $n$-th coefficient of $\Theta_{Q,2,0}|V_4$ vanishes.
 Thus $\Theta_{Q,2,0}|V_4\in \sM_{1}^{\fg,+}(\Gamma_0(4N),\chi_{-4N})$. Hence, 
  $$
  F=\sum_{d|N}\Big(\widehat{\mathcal H}_N|U_d\cdot\Theta_{Q,2,0}|V_4 U_d\Big)|U_{\frac{N}{d}}\in \mM_{\frac{5}{2}}\left(\Gamma_0(16)\right),
  $$
  by Theorem \ref{theorem: level reduction}. Then, by Lemma \ref{lemma: level 16},
  $$
  \widehat G_{Q,1}:=F|S_{8,1}=\sum_{d|N}
  \Big(\widehat{\mathcal H}_N|U_d
      \cdot\Theta_{Q,2,0}|V_4 U_d\Big)
      |U_{\frac{N}{d}}|S_{8,1}\in \mM_{\frac{5}{2}}\left(\Gamma_0(16)\right).
  $$
Note that, by \eqref{eqn:HnHat}, the $n$-th Fourier coefficient of the non-holomorphic part of $\widehat{\mathcal H}_{N}$ vanishes if $-n$ is not a square.  Since the $m$-th Fourier coefficient of $\Theta_{Q,2,0}|V_4$ vanishes unless $8\mid m$ and $N\equiv 3\pmod{8}$ by assumption, we conclude that the non-holomorphic part of $F$ is annihilated by $S_{8,1}$. Hence $\widehat G_{Q,1}$ is holomorphic. We conclude that $\widehat{G}_{Q,1}\in \sM_{\frac{5}{2}}(\Gamma_0(16))$. 

  Finally, by Lemma \ref{lemma: level 16}, $\mathcal{H}_{\frac{5}{2}}|S_{8,1}\in\sM_{\frac{5}{2}}(\Gamma_0(16))$. Using Magma \cite{Magma}, we find that the space $\sM_{\frac{5}{2}}(\Gamma_0(16))$ is $6$-dimensional and the subspace of forms whose Fourier expansions are supported on powers of $q$ congruent to $1\pmod{8}$ is one-dimensional. Therefore, $G_{Q,1}$ must be a scalar multiple of $\mathcal{H}_{\frac{5}{2}}|S_{8,1}$. We write the constant of proportionality in the form $c_{Q,1} 2^{\omega(N)}\sigma_1(N)$.\qedhere
\end{proof}

\begin{proof}[Proof of Theorem \ref{thm:PrincipalGenus}]
Suppose that $m\in\{0,1\}$, $j\in\{1,5\}$ and 
\[
\delta\equiv \begin{cases} 3&\text{if }(m,j)\in \{(0,1),(1,5)\}, \\
7&\text{if }(m,j)\in\{(0,5),(1,1)\},
\end{cases} \ \pmod{8}.
\]
Using Proposition \ref{prop:Hrewrite} with $M=2$, we write (note that since $D=1$, we have $\omega(\gcd(D,d))=0$)
\[
G_{Q,j}=  \sum_{d|\delta}\big( \mathcal H_{\delta}|U_d \cdot \Theta_{Q,m,2}| V_{4} U_d\big)|U_{\frac{\delta}{d}} S_{8,j} =  2^{\omega(\delta)} \sum_{d|\delta}d\cdot \left(\mathcal H|V_{d^2}\cdot \Theta_{Q,m,2}|V_4\right)|U_{\delta} S_{8,j}.
\]
By Theorem \ref{thm:PrincipalGenus-weak}, there exists a constant $c_{Q,j}$ such that
\[
G_{Q,j}=c_{Q,j}2^{\omega(\delta)}\sigma_1(\delta)\mathcal{H}_{\frac{5}{2}}|S_{8,j}.
\]
Therefore
\begin{equation}\label{eqn:cQjPrincipalGenus}
2^{\omega(\delta)} \sum_{d|\delta}d\cdot \left(\mathcal H|V_{d^2} \cdot\Theta_{Q,m,2}|V_4\right)|U_{\delta} S_{8,j}=c_{Q,j}2^{\omega(\delta)}\sigma_1(\delta)\mathcal{H}_{\frac{5}{2}}|S_{8,j}.
\end{equation}
Comparing Fourier coefficients on both sides gives the claim if $c_{Q,j}=-[j]_8$, which we next show. First suppose that $j=1$ and
\begin{equation}\label{eqn:PrincipalForm}
\mathcal{Q}(\bm{n}):=n_1^2+n_1n_2+\frac{1+\delta}{4} n_2^2=\left(n_1+\frac{n_2}{2}\right)^2+\frac{\delta}{4}n_2^2
\end{equation}
 is the principal form. We compute the constant of proportionality $c_{\mathcal{Q},1}$ by comparing $n=1$ on both sides of \eqref{eqn:cQjPrincipalGenus} with $-[1]_8$ replaced by $c_{\mathcal{Q},1}$. If $n_2\neq 0$, then $\mathcal{Q}(\bm{n})\geq \frac{\delta}{4}$. But then
\[
H\left(\frac{\delta-4\mathcal{Q}(\bm{n})}{r^2}\right)=0
\]
unless $\mathcal{Q}(\bm{n})\leq \frac{\delta}{4}$. Since $\delta$ is odd, we have $\mathcal{Q}(\bm{n})\neq \frac{\delta}{4}$, and we conclude that $n_2=0$. But then $\mathcal{Q}(n_1,0)=n_1^2$. The sum on the left-hand side of Theorem \ref{thm:PrincipalGenus} therefore becomes 
\[
\sum_{\substack{m\in\Z\\ 4m^2\equiv\delta-3\Pmod8}} \sum_{r\mid \delta}r H\left(\frac{\delta-4m^2}{r^2}\right).
\]
Note that if $r\ge2$, then $H(\frac{\delta-4m^2}{r^2})=0$ because $r\mid \delta$ and $\delta$ is odd and squarefree. We conclude that 
\begin{align*}
\sum_{\substack{m\in\Z\\ 4m^2\equiv\delta-3\Pmod8}} \sum_{r\mid \delta}rH\left(\frac{\delta-4m^2}{r^2}\right)=\sum_{\substack{m\in\Z\\ 4m^2\equiv\delta-3\Pmod8}}H\left(\delta-4m^2\right)
=\sum_{\substack{m\in\Z\\ m\equiv \frac{\delta-3}{4}\pmod{2}}}H\left(\delta-4m^2\right).
\end{align*}
We claim that, for $\delta\equiv 3\pmod{4}$, 
\begin{equation}\label{eqn:deltaeval}
\sum_{\substack{m\in\Z \\ m\equiv \frac{\delta-3}{4}\pmod 2}} H\left(\delta-4m^2\right)=\frac{1}{12}\sigma_1(\delta).
\end{equation}
To obtain \eqref{eqn:deltaeval}, we write
\begin{equation}\label{eqn:genfunHs2}
	\sum_{\delta\equiv 3\pmod 4} \sum_{\substack{m\in\Z \\ m\equiv \frac{\delta-3}{4}\pmod 2}} H\left(\delta-4m^2\right)q^{\delta}=\left(\mathcal{H}|S_{8,3}\cdot \vartheta|V_4\right)(\tau).
\end{equation}

Since the non-holomorphic part of $\widehat{\mathcal{H}}$ is supported on negative squares, we see that \eqref{eqn:genfunHs2} is in $M_2(\Gamma_0(64))$. The function
\[
\frac{1}{12}\sum_{\delta\equiv 3\pmod 4} \sigma_1(\delta)q^\delta=\frac{1}{288} E_2|S_{8,3}(\tau)
\]
is also in $M_2(\Gamma_0(64))$. So the claimed identity follows from the valence formula by comparing the first 
$\frac{2}{12}\left[\SL_2(\Z):\Gamma_0(64)\right]=16$
Fourier coefficients. This verifies \eqref{eqn:deltaeval} and gives $c_{\mathcal{Q},1}=-1$.

We next compute $c_{\mathcal{Q},5}$. For this, we compare the case $n=5$ on both sides of \eqref{eqn:cQjPrincipalGenus} with $-[5]_8$ replaced by $c_{\mathcal{Q},5}$. If $|n_2|\geq 3$, then \eqref{eqn:PrincipalForm} implies that $4\mathcal{Q}(\bm{n})\geq 9\delta$, so that 
\[
H\left(\frac{5\delta-4\mathcal{Q}(\bm{n})}{r^2}\right)=0.
\]
Hence, plugging in \eqref{eqn:PrincipalForm} with $-2\leq n_2\leq 2$, the left-hand side of Theorem \ref{thm:PrincipalGenus} with $n=5$ becomes 
\begin{equation}\label{eqn:PrincipalFormSimplify}
\sum_{n_2=-2}^2 \sum_{\substack{n_1\in\Z\\ \left(5-n_2^2\right)\delta-\left(2n_1+n_2\right)^2\equiv 3\pmod{8}}} \sum_{r\mid \delta} r  H\left(\frac{\left(5-n_2^2\right)\delta-\left(2n_1+n_2\right)^2}{r^2}\right).
\end{equation} 
We make the change of variables $n_1\mapsto 2n_1+n_2$ to rewrite the sum over $n_1$ as $n_1\equiv n_2\pmod{2}$ and the condition $(5-n_2^2)\delta-(2n_1+n_2)^2\equiv 3\pmod{8}$ becomes $(5-n_2^2)\delta-n_1^2\equiv 3\pmod{8}$. So, splitting into the cases with $n_2=0$, $|n_2|=1$, and $|n_2|=2$, \eqref{eqn:PrincipalFormSimplify} becomes
\begin{multline*}
	\rule[-22pt]{0.6pt}{38pt} \sum_{\substack{m\in\Z\\ 5\delta-4m^2\equiv 3\pmod{8}}} \sum_{r\mid \delta} r H\left(\frac{5\delta-4m^2}{r^2}\right) +2\sum_{\substack{m\in\Z\\ m\equiv 1\pmod{2}\\ 4\delta-m^2\equiv 3\pmod{8}}} \sum_{r\mid \delta} r H\left(\frac{4\delta-m^2}{r^2}\right) \\[-15pt]
	+2\sum_{\substack{m\in\Z\\ \delta-4m^2\equiv 3\pmod{8}}} \sum_{r\mid \delta} r H\left(\frac{\delta-4m^2}{r^2}\right). \ \rule[-22pt]{0.6pt}{38pt}
\end{multline*}
If $\delta\equiv 3\pmod{8}$ (resp. $\delta\equiv 7\pmod{8}$), then one checks that $m\equiv 1\pmod{2}$ (resp. $m\equiv 0\pmod{2}$) is equivalent to $5\delta-4m^2\equiv 3\pmod{8}$ and $m\equiv 0\pmod{2}$ (resp. $m\equiv 1\pmod{2}$) is equivalent to $\delta-4m^2\equiv 3\pmod{8}$. So for $\delta\equiv 3+4\ell\pmod{8}$ we may simplify \eqref{eqn:PrincipalFormSimplify} as
\begin{multline}\label{eqn:PrincipalFormSimplify3}
\sum_{\substack{m\in\Z\\ m\equiv \ell+1\pmod{2}\\5\delta-4m^2\equiv 3\pmod{8} }} \sum_{r\mid \delta} r H\left(\frac{5\delta-4m^2}{r^2}\right)+2\sum_{\substack{m\in\Z\\ m\equiv 1\pmod{2}\\ 4\delta-m^2\equiv 3\pmod{8}}} \sum_{r\mid \delta} r H\left(\frac{4\delta-m^2}{r^2}\right)\\
+2\sum_{\substack{m\in\Z\\ m\equiv \ell\pmod{2}\\ \delta-4m^2\equiv 3\pmod{8}}} \sum_{r\mid \delta} r H\left(\frac{\delta-4m^2}{r^2}\right).
\end{multline}
In the second and third sums of \eqref{eqn:PrincipalFormSimplify3}, the terms with $r\neq 1$ vanish, while the terms with $r\nmid \gcd(\delta,5)$ in the first sum vanish.  Thus \eqref{eqn:PrincipalFormSimplify3} simplifies as
\begin{multline}\label{eqn:lastsumtoeval}
\sum_{\substack{m\in\Z\\ m\equiv \ell+1\pmod{2}\\ 5\delta-4m^2\equiv 3\pmod{8}}}H\left(5\delta-4m^2\right) + 5\delta_{5\mid \delta}\hspace{-.3cm} \sum_{\substack{m\in\Z\\ m\equiv \ell+1\pmod{2}\\  5\delta-4m^2\equiv 3\pmod{8}}}H\left(\frac{\delta}{5}-4m^2\right)+2\sum_{\substack{m\in\Z\\ m\equiv 1\pmod{2}\\ 4\delta-m^2\equiv 3\pmod{8}}} H\left(4\delta-m^2\right)\\
+2\sum_{\substack{m\in\Z\\ m\equiv \ell\pmod{2}\\ \delta-4m^2\equiv 3\pmod{8}}}  H\left(\delta-4m^2\right).
\end{multline}

We use Lemma \ref{lem:UnarySum} to evaluate \eqref{eqn:lastsumtoeval}. Plugging in Lemma \ref{lem:UnarySum} (2) for the first, second, and last sums and \ref{lem:UnarySum} (1) for the third sum, we obtain that \eqref{eqn:lastsumtoeval} equals
\begin{multline*}
\delta_{5\delta\equiv 3+4(1+\ell)\pmod{8}}\frac{1}{12}\sigma_1(5\delta)+5\delta_{5\mid \delta} \delta_{\frac{\delta}{5}\equiv  3+4(1+\ell)\pmod{8}} \frac{1}{12}\sigma_1\left(\frac{\delta}{5}\right)
+ 2 \delta_{4\delta\equiv 4\pmod{8}} \frac{2}{21}\sigma_1(4\delta) \\ 
+ 2\delta_{\delta\equiv 3+4\ell\pmod{8}}\frac{1}{12}\sigma_1(\delta)=
\frac{1}{12}\sigma_1(5\delta)+5\delta_{5\mid \delta}  \frac{1}{12}\sigma_1\left(\frac{\delta}{5}\right) +  \frac{4}{21}\sigma_1(4\delta) + \frac{1}{6}\sigma_1(\delta),
\end{multline*}
using $\delta\equiv 3+4\ell\pmod{8}$ in the last step. Since $\sigma_1$ is multiplicative and $\delta$ is odd, plugging in $\sigma_1(4)=7$ and simplifying yields 
\[
\frac{1}{12}\sigma_1(5\delta)+5\delta_{5\mid \delta}  \frac{1}{12}\sigma_1\left(\frac{\delta}{5}\right) +  \frac{4}{3}\sigma_1(\delta) + \frac{1}{6}\sigma_1(\delta)=\frac{1}{12}\sigma_1(5\delta)+5\delta_{5\mid \delta}  \frac{1}{12}\sigma_1\left(\frac{\delta}{5}\right) +  \frac{3}{2}\sigma_1(\delta).
\]
Writing $\delta=5^rm$ with $5\nmid m$ and evaluating $\sigma_1(5^j)=\frac{5^{j+1}-1}{4}$, we compute 
\[
\frac{1}{12}\sigma_1(5\delta)+5\delta_{5\mid \delta}  \frac{1}{12}\sigma_1\left(\frac{\delta}{5}\right)=\frac{1}{2}\sigma_1(\delta).
\]
Hence 
\[
\frac{1}{12}\sigma_1(5\delta)+5\delta_{5\mid \delta}  \frac{1}{12}\sigma_1\left(\frac{\delta}{5}\right) +  \frac{3}{2}\sigma_1(\delta)=2\sigma_1(\delta).
\]
We conclude that the left-hand side of Theorem \ref{thm:PrincipalGenus} with $n=5$ is $2\sigma_1(\delta)$. Comparing with the right-hand side of Theorem \ref{thm:PrincipalGenus-weak} yields
$
c_{\mathcal{Q},5}=\frac{2}{H(2,5)}.
$
By \cite[Proposition 4.1]{Cohen}, we have $H(2,5)=-\frac{2}{5}$, so we conclude that $c_{\mathcal{Q},5}=-5$. We claim that $c_{\mathcal{Q},j}=c_{Q,j}$ for any $Q\in \fg_{-\delta,1,\delta}$, and hence $c_{Q,1}=-1$ and $c_{Q,5}=-5$. Indeed, it is well-known that $\Theta_Q-\Theta_{\mathcal{Q}}$ is a cusp form (e.g. \cite{Walling}). Then $\Theta_{Q,2,m}-\Theta_{\mathcal{Q},2,m}$ is also a cusp form and Theorem \ref{thm:PrincipalGenus-weak} implies that
\begin{align*}
2^{\omega(\delta)} \sum_{d|\delta}d\cdot \left(\mathcal H|V_{d^2} \left(\Theta_{Q,2,m}|V_4-\Theta_{\mathcal{Q},2,m}\right)\right)|U_{\delta} S_{8,j}&=G_{Q,j}-G_{\mathcal{Q},j} \\[-15pt]
&=\left(c_{Q,j}-c_{\mathcal{Q},j}\right)2^{\omega(\delta)}\sigma_1(\delta)\mathcal{H}_{\frac{5}{2}}|S_{8,j}.
\end{align*}
Since $\Theta_{Q,2,j}-\Theta_{\mathcal{Q},2,j}$ is a cusp form and $\widehat{\mathcal{H}}$ grows at most polynomially at the cusps, the left-hand side is a cusp form. Hence if $c_{Q,j}\neq c_{\mathcal{Q},j}$, then $\mathcal{H}_{\frac{5}{2}}|S_{8,j}$ is a cusp form. Since $\mathcal{H}_{\frac{5}{2}}|S_{8,j}$ is not a cusp form by Lemma \ref{lem:HSn8-notcusp}, we obtain that $c_{Q,j}=c_{\mathcal{Q},j}$, as claimed.
\end{proof}

\begin{proof}[Proof of Theorem \ref{theorem: principal genus}]
By Theorem \ref{thm:PrincipalGenus-weak}, for $j\in\{1,5\}$, there exist constants $c_{Q,j}$ such that 
\[
G_{Q,j}=c_{Q,j}2^{\omega(\delta)}\sigma_1(\delta)\mathcal{H}_{\frac{5}{2}}|S_{8,j}.
\]
As shown in the proof of Theorem \ref{thm:PrincipalGenus}, we have $c_{Q,j}=-[j]_8$, yielding the claim.
\end{proof}

\section{Further generalizations}\label{sec:further}
Since Theorems \ref{thm:PrincipalGenus} and \ref{thm:D>1} generalize \eqref{eqn:Eichler}, it is natural to consider class number relations from other theta functions. As noted in the introduction, Theorem \ref{theorem: level reduction} may be used to obtain class number relations involving spherical polynomials. The following example illustrates this phenomenon. 

\begin{Example} Assume the notation and the hypotheses of Theorem \ref{theorem: D>1} and, in particular, that $D>1$. Let $P$ be a spherical polynomial of degree $2$ with respect to a binary quadratic form  $Q(\bm{m})=am_1^2+bm_1m_2+cm_2^2\in \fg_{-\delta,D,N}$ of discriminant $-\delta$, i.e., $P(\bm{m})$ is a linear combination of $(2am_1+bm_2)m_2$ and $m_1(bm_1+2cm_2)$. Then, by \cite[Proposition 2.1]{Shimura-correspondence},
  $$
  \Theta_{Q,P}(\tau):=\sum_{\bm{m}\in\Z^2}P(\bm{m})q^{Q(\bm{m})} \in \sS_{3}(\Gamma_0(\delta),\chi_{-\delta}).
  $$
  Applying Theorem \ref{theorem: level reduction} to $\mathcal H_{D,N}$ and $\Theta_{Q,P}|V_{4}$, we see that
  $$
  \sum_{d|\delta}\left(\mathcal H_{D,N}|U_d
    \cdot \Theta_{Q,P}|V_{4} U_d\right)|U_{\frac{\delta}{d}}
  $$
  is a cusp form in  $\sS^{+}_{\frac{9}{2}}(\Gamma_0(4))$ and hence vanishes.\footnote{The image under the Shimura lift of a cusp form in Kohnen's $+$-space of $\sS_{\frac{9}{2}}(\Gamma_0(4))$ is a cusp form of weight $8$ on $\SL_2(\Z)$, so it must be $0$.} As in Corollary \ref{corollary: D>1}, we find that
  $$
  \sum_{d|\delta}(-1)^{\omega(\gcd(D,d))}d\cdot\left(\mathcal H|V_{d^{2}}
    \cdot\Theta_{Q,P}|V_{4}\right)|U_{\delta}=0.
  $$
  Consequently, for $n\in\N$ with $n\equiv0,1\Pmod 4$, we have
  $$
  \sum_{d|\delta}(-1)^{\omega(\gcd(D,d))}d\sum_{\bm{m}\in\Z^2}
  P(\bm{m})H\left(\frac{\delta n-4Q(\bm{m})}{d^2}\right)=0.
  $$
\end{Example}
\bibliographystyle{plain}


\end{document}